%=====================================================%
\documentclass[letterpaper,11pt,onecolumn]{article}
%=====================================================%
\usepackage[margin=1in]{geometry}

\PassOptionsToPackage{dvipsnames}{xcolor}
%\documentclass[onefignum,onetabnum,a4paper,review]{siamart190516}
%---------
\setlength\overfullrule{0pt} % remove black boxes that indicate overfull hboxes

\usepackage[T1]{fontenc}
\usepackage[latin1]{inputenc}
\usepackage[english]{babel}

\usepackage{amsmath}
\usepackage{amsbsy}
\usepackage{amsfonts}
\usepackage{amssymb}
\usepackage{amsthm}
\usepackage{array}
\usepackage{bm}
\usepackage{booktabs}
\usepackage{csquotes}
\usepackage{enumerate}
\usepackage{float}
\usepackage{graphicx}
\usepackage{mathrsfs}
\usepackage{pgf}
\usepackage{hyperref}

\usepackage[font={footnotesize}]{caption}
\usepackage{subcaption}

% Hyperlinks
%\usepackage[pdftex]{hyperref}
\hypersetup{colorlinks=false}

% Include preamble definitions

%\renewcommand{\thesection}{\arabic{section}}

%\renewcommand{\thefigure}{\thesection.\arabic{figure}}
%
\def\be{\begin{equation}}
\def\ee{\end{equation}}
\def\ba{\begin{array}}
\def\ea{\end{array}}
\def\bea{\begin{eqnarray}}
\def\eea{\end{eqnarray}}
\def\beas{\begin{eqnarray*}}
\def\eeas{\end{eqnarray*}}

\newcommand{\bfHdiv}{\bfH({\rm div},\Omega)}
\newcommand{\half}{\frac{1}{2}}

\newcommand{\eq}[1]{(\ref{eq:#1})}

\newcommand{\bs}{\boldsymbol}

\newcommand{\mc}[1]{\mathcal{#1}}

\newcommand{\envE}{\boldsymbol{\mathsf E}}
\newcommand{\envH}{\boldsymbol{\mathsf H}}

\newcommand{\bi}{{\mathrm{i}}}

\newcommand{\bfEt}{{\widetilde{\bfE}}}
\newcommand{\Edreit}{{\widetilde{E}_3}}
\newcommand{\bfHt}{{\widetilde{\bfH}}}
\newcommand{\Hdreit}{{\widetilde{H}_3}}

\newcommand{\bfe}{\boldsymbol{e}}
\newcommand{\bff}{\boldsymbol{f}}
\newcommand{\bfg}{\boldsymbol{g}}

\newcommand{\bfn}{\boldsymbol{n}}

\newcommand{\bfu}{\boldsymbol{u}}
\newcommand{\bfv}{\boldsymbol{v}}

\newcommand{\bfE}{\boldsymbol{E}}
\newcommand{\bfF}{\boldsymbol{F}}
\newcommand{\bfG}{\boldsymbol{G}}
\newcommand{\bfH}{\boldsymbol{H}}

\newcommand{\ez}{\boldsymbol e_z}

\newcommand{\eps}{{\epsilon}}

\newcommand{\ome}{{\omega}}

\newcommand{\Ome}{{\Omega}}

\newcommand{\bfpsi}{\boldsymbol{\psi}}

\newcommand{\ptl}{{\partial}}

\newcommand{\bfnab}{\boldsymbol{\nabla}}

\newcommand{\doubleIN}{\mathbb{N}}
\newcommand{\doubleIR}{\mathbb{R}}
\newcommand{\doubleIC}{\mathbb{C}}

\newcommand{\ds}{\displaystyle}

\catcode `@=11
\newenvironment{numberedproof}[1]{{\bf Proof #1:}}{{}\hfill{\hbox{$\Box$}}\par\bigskip}

%\newenvironment{proof}{%
%  \ifhmode\par\fi%
%  \addvspace{18bp}%
%  \noindent%
%  {\bf Proof: }%
%  \hskip 10bp%
%  \bgroup %
%}{\egroup\hskip 16bp\rule{4bp}{10bp}\par\addvspace{12bp}}
%
%\newbox \itemlist@label
%\newdimen \itemlist@labelpad
%
%\def\itemlist@makelabel#1{%
%\setbox\itemlist@label =\hbox{#1}%
%\ifdim \wd\itemlist@label >\labelwidth
%\itemlist@labelpad=\textwidth
%\advance\itemlist@labelpad by -\rightmargin
%\advance\itemlist@labelpad by -\@totalleftmargin
%\advance\itemlist@labelpad by \labelwidth
%\hbox to \itemlist@labelpad {#1\hfill}%
%\else #1\hfill
%\fi
%}

\newcounter{exercisenumber}

\renewcommand{\theexercisenumber}{\thesection.\arabic{exercisenumber}}

\DeclareMathOperator{\curl}{curl}

\let\div\relax
\DeclareMathOperator{\div}{div}

\newcommand{\dtn}{\operatorname{DtN}}
\newcommand{\dtnm}{\operatorname{DtN}^{\operatorname{mw}}}
\newcommand{\fraka}{\mathfrak a}

\newcommand{\IP}{I_{\mathrm{prop}}}
\newcommand{\IE}{I_{\mathrm{eva}}}
\renewcommand{\Re}{\operatorname*{Re}}
\renewcommand{\Im}{\operatorname*{Im}}
\newcommand{\Gammai}{\Gamma_{\mathrm{in}}}
\newcommand{\Gammal}{\Gamma_{\mathrm{lat}}}
\newcommand{\Gammao}{\Gamma_{\mathrm{out}}}
\newcommand{\Gammam}{\Gamma_{-}}

\newcommand{\alphahat}{\hat\alpha_i}
\newcommand{\betahat}{\hat\beta_j}

\def\tmaxwell{\operatorname*{mw}}
\def\thelm{\operatorname*{helm}}

\newcommand*\diff{\mathop{}\!\mathrm{d}}

\newcommand{\JMM}[1]{{\color{black}#1}}

\let\tilde\widetilde

% \makeindex

%=====================================================%
% Equation numbering
\numberwithin{equation}{section}
%=====================================================%
% Theorems and the like
\newtheorem{theorem}{Theorem}[section]

\newtheorem{remark}[theorem]{Remark}
\newtheorem{lemma}[theorem]{Lemma}
\newtheorem{example}[theorem]{Example}

%\newsiamremark{remark}{Remark}
%\newsiamremark{example}{Example}

%=====================================================%
% Paths
\graphicspath{{figures/}}

%=====================================================%
% Bibliography files
%\usepackage[backend=bibtex,
%       giveninits=true,
%       maxbibnames=4,
%       date=year,
%       doi=false,
%	url=false,
%	isbn=false,
%	eprint=false]{biblatex}
%\addbibresource{bib/journals-iso4.bib}
%\addbibresource{bib/ref_Demkowicz.bib}
%\addbibresource{bib/ref_DPG.bib}
%\addbibresource{bib/ref_laser.bib}
%=====================================================%
% Color definitions

\newcommand{\eremk}{\hbox{}\hfill\rule{0.8ex}{0.8ex}}

%=====================================================%

\begin{document}
\title{
Stability Analysis for Electromagnetic Waveguides. 
Part 1: Acoustic and Homogeneous Electromagnetic Waveguides}
\author{Jens M.~Melenk\thanks{Technische Universit\"at Wien, melenk@tuwien.ac.at}
	\and Leszek Demkowicz\thanks{Oden Institute, The University of Texas at Austin, $\{$leszek, stefan$\}$@oden.utexas.edu}
        \and Stefan Henneking\footnotemark[2]
        }

% Title options:
% Convergence of the DPG Method for Waveguide Problems
% 1. Convergence of Full Envelope DPG Method for Waveguide Problems
% 2. Convergence of the DPG Method for Waveguide Problems with Application to Fiber Optics
% 3. On the Stability of the DPG Method for Waveguide Problems -- Part 1: Homogeneous Waveguide
% 4. On the Stability of the DPG Method for Waveguide Problems with Application to Fiber Optics -- Part 1: Homogeneous Waveguide

\maketitle

\begin{abstract}
In a time-harmonic setting, we show for heterogeneous acoustic and homogeneous electromagnetic wavesguides stability estimates with the stability
constant depending linearly on the length $L$ of the waveguide. These stability estimates are used for the analysis of the (ideal) ultraweak (UW) variant 
of the Discontinuous Petrov--Galerkin (DPG) method. For this UW DPG, we show that the stability deterioration with $L$ can be countered by suitably
scaling the test norm of the method. We present the ``full envelope approximation'', a UW DPG method based on non-polynomial ansatz functions that allows for treating long waveguides. 
\end{abstract}

{\bf keywords:} 
waveguide, Helmholtz equation, Maxwell's equation, non-reflecting boundary conditions, stability 

{\bf AMS classification:} 
78A50, %Antennas, Wave guides 
35Q60, %PDEs in connection with optics and electromagnetic theory
35J05  %laplacian operator, reduced wave equation (Helmholtz equation)
65N30 % FEM
%\begin{keyword}
%waveguide, Helmholtz equation, Maxwell's equation, non-reflecting boundary conditions, stability 
%\end{keyword}
%\begin{AMS}
%78A50, %Antennas, Wave guides 
%35Q60, %PDEs in connection with optics and electromagnetic theory
%35J05  %laplacian operator, reduced wave equation (Helmholtz equation)
%65N30 % FEM
%
%\end{AMS}

% Introduction
\section{Introduction}
\label{sec:introduction}
\paragraph{Motivation}
Acoustic and electromagnetic (EM) waveguide problems have many important applications and are therefore discussed widely in the literature \cite{kawano2004waveguide, snyder1983optical, agrawal, griffiths, jackson}. For many applications of interest, such as optical fibers, the propagating wave has high frequency and the waveguide length, denoted by $L$ throughout this work, is very large compared to the wavelength. It is therefore challenging to approximate the solution numerically; to obtain an accurate solution, one must sufficiently resolve the wavelength scale and additionally counter the effect of numerical pollution to overcome stability issues of the discretization \cite{BabuskaSauter97, ernst2012difficult}.

% Some FE methods such as the Discontinuous Petrov--Galerkin (DPG) Finite Element (FE) Method with Optimal Test Functions can circumvent the stability problem and deliver a robust discretization for any wavenumber \cite{DPG4, DemkowiczGopalakrishnanMugaZitelli12, Petrides_Demkowicz_21}, but they do not eliminate the pollution error in multiple dimensions.

While the stability of finite element (FE) discretizations of Helmholtz and time-harmonic Maxwell problems has been analyzed in fixed domains for increasing wave frequency $\omega$ \cite{melenk2011conv, DemkowiczGopalakrishnanMugaZitelli12, melenk2020maxwell}, there is to the best of our knowledge no such corresponding analysis for the waveguide problem where $\omega$ is fixed and the waveguide length $L$ increases. In practical applications, this is of great relevance as the available computational tools become more powerful thereby enabling numerical solution of waveguide models of realistic length scales. We discuss the present work in the context of modeling optical fiber amplifiers but emphasize that the main results of this work are relevant to FE discretization of acoustic and EM waveguide problems with the Discontinuous Petrov--Galerkin (DPG) Method \cite{DPG_Encyklopedia_18} \emph{in general.}

\paragraph{Optical amplifiers}
Optical fiber amplifiers can produce highly coherent laser outputs with great efficiency, which has enabled advances in many engineering applications \cite{jauregui2013fiber}. However, at high-power operation, these fiber laser systems are susceptible to the onset of various nonlinear effects that are adverse to the beam quality of the laser \cite{agrawal, kobyakov2010sbs, smith1972optical}. One particular challenge is mitigating the effects of heating of the silica-glass fiber. Under sufficient heat load, the fiber amplifier experiences a thermally-induced nonlinear effect called the transverse mode instability (TMI) \cite{eidam2011experimental, jauregui2020tmi}. TMI is characterized by a sudden reduction of the beam coherence above a certain power threshold. 
%Specifically, there is a transition from a stable beam to a chaotic energy transfer between the fiber's guided modes in time. 
This instability is a major limitation for the average power scaling of fiber laser systems \cite{jauregui2020tmi}. While a scientific consensus on the thermal origins of TMI has developed over the past years, finding effective mitigation strategies that do not incite other power limiting nonlinearities remains an active field of research in fiber optics.

In the context of studying TMI and other nonlinear effects in fibers, numerical simulations play an important role. Typically, a model needs to account for two fields in the fiber amplifier: 1)~the \emph{signal laser}, a highly coherent light source that is seeded into the fiber core; and 2)~the \emph{pump field}, which provides the energy for amplification of the signal and is typically injected into the fiber cladding. A variety of different models are employed in fiber amplifier simulations (e.g., \cite{saitoh2001bpm, ward2013bpm, naderi2013tmi, goswami2021fiber} and references therein). These models are usually derived from the time-harmonic Maxwell equations, and by making additional modeling assumptions they become easier to discretize and compute than a vectorial Maxwell problem.

\paragraph*{Vectorial Maxwell fiber amplifier model}
This work is part of a continued effort to build reliable, high-fidelity FE models for investigating TMI in optical amplifiers \cite{Nagaraj_Grosek_Petrides_Demkowicz_MoraPaz_19, Henneking_Demkowicz_21, Henneking_Grosek_Demkowicz_21, Henneking_phd, Henneking_Grosek_Demkowicz_22}. The model consists of a system of two nonlinear time-harmonic Maxwell equations (one for the signal field and one for the pump field) coupled with each other and with the transient heat equation to account for thermal effects \cite{Nagaraj_Grosek_Petrides_Demkowicz_MoraPaz_19, Henneking_Grosek_Demkowicz_21}. Modeling of a 1--10~m long fiber segment involves the solution with $\mc{O}(10^6)$ to $\mc{O}(10^7)$ wavelengths. Solving such a problem with a direct FE discretization is infeasible, even on state-of-the-art supercomputers. Hence, our initial efforts focused on so-called \emph{equivalent short fiber models}, which artificially scale physical parameters of the model, involving first $\mc{O}$(100) wavelengths (using OpenMP parallelization) \cite{Nagaraj_Grosek_Petrides_Demkowicz_MoraPaz_19, Henneking_Grosek_Demkowicz_21}, and then, more realistic models of (a tiny segment of) the actual fiber with up to $\mc{O}$(10,000) wavelengths \cite{Henneking_Demkowicz_21, Henneking_phd, Henneking_Grosek_Demkowicz_22} using MPI+OpenMP parallelization and up to 512 manycore compute nodes.\footnote{These fiber models have been implemented in the $hp$3D finite element library \cite{Henneking_24_hp3d} and were tested on the \emph{Frontera} supercomputer at the Texas Advanced Computing Center (TACC). Each compute node on TACC's \emph{Frontera} system is equipped with two Intel Cascade Lake processors and a total of 56 cores.}

Because the laser light in a meter-long optical fiber has millions of wavelengths, it is extremely difficult to resolve the wavelength scale of the propagating light for the full length. For this reason, even simplified models typically resolve a longer length scale. In the context of TMI studies, it is common to resolve only the length scale of the \emph{mode beat} between the fundamental mode and higher-order modes since the mode instabilities occur at that scale. In a typical weakly-guiding, large-mode-area fiber amplifier, the mode beat length is on the order of $\mc{O}$(1,000) wavelengths.

This brought forth the idea of the \emph{full envelope approximation}, the solution to an alternative formulation of our vectorial Maxwell model in which the field is less oscillatory in the (longitudinal) $z$-direction. Consider the linear time-harmonic Maxwell equations in a non-magnetic and dielectric medium, in the absence of free charges:
\begin{align}
	\bfnab \times \bfE &= -\bi \omega \mu_0 \bfH \, ,
	\label{eq:time-harmonic-1} %\\
& 
	\bfnab \times \bfH &= \bi \omega \eps \bfE \, ,
	%\label{eq:time-harmonic-2}
\end{align}
where $\bi = \sqrt{-1}$, $\omega$ is the angular frequency of the light, $\mu_0$ denotes permeability in vacuum, and $\eps = \eps(x,y,z)$ denotes permittivity.
The idea of the full envelope approximation is very simple: Instead of solving for the original EM field $(\bfE, \bfH)$ using 
%\eq{time-harmonic-1}--\eq{time-harmonic-2}, we seek the solution in the form:
\eq{time-harmonic-1} we seek the solution in the form:
\begin{align}
	\bfE(x,y,z) &= \envE(x,y,z) e^{-\bi kz} \, ,
	\label{eq:envelope-ansatz-1} %\\
& 
	\bfH(x,y,z) &= \envH(x,y,z) e^{-\bi kz} \, ,
%	\label{eq:envelope-ansatz-2}
\end{align}
where the envelope wavenumber $k \in \doubleIR$ corresponds to a dominant frequency in the waveguide $z$-direction. If the effective wavenumber of the propagating fields $(\bfE, \bfH)$ in the $z$-direction is indeed close to $k$, then the approximation of the new fields ($\envE$, $\envH$) requires orders of magnitude fewer elements along the fiber than the approximation of the original fields $(\bfE, \bfH)$. Upon substituting the ansatz into Maxwell's equations and factoring out the exponential, we obtain the modified Maxwell model
%\begin{subequations}
	%\label{eq:envelope} \\
\begin{align}
	\label{eq:envelope} 
	\bfnab \times \envE - \bi k \ez \times \envE &= -\bi \omega \mu_0 \envH \, , 
& 
%	\label{eq:envelope-1} \\
	\bfnab \times \envH - \bi k \ez \times \envH &= \bi \omega \eps \envE \, ,
%	\label{eq:envelope-2}
\end{align}
%\end{subequations}
where $\ez = (0,0,1)^T$ is the unit vector in $z$-direction. This idea has turned out to be very successful enabling modeling of TMI in fiber amplifiers with several million wavelengths \cite{Henneking_Grosek_Demkowicz_24}.

\paragraph*{Non-homogeneous acoustic waveguide}
Acoustical waveguides are of interest on their own, but in this work we treat the acoustic problem mostly as a stepping stone for the Maxwell waveguide problem. The two problems, albeit very similar, represent a surprisingly different level of difficulty. For a start, the presented analysis of the acoustic problem, based on the standard spectral theorem for self-adjoint operators, is possible for the non-homogeneous case, whereas, for the Maxwell problem, this is possible only for the case of a homogeneous waveguide (see \cite{Demkowicz_Melenk_Badger_Henneking_24} for a partial result for non-homogeneous Maxwell waveguides). We mention additionally that the common assumption in the optics community about the electric field being approximately divergence free, leads to the vector Helmholtz equation for the electric field, a problem directly related to the acoustics case.

\paragraph*{Scope of this work}
The presented paper provides theoretical foundations for the full envelope model and the convergence of the ultraweak DPG method for acoustic and EM waveguide problems with increasing waveguide length $L$. Numerical studies to this extent were carried out in \cite{Henneking_Demkowicz_21}; however, the convergence of the method was not analyzed therein.

We begin by introducing in Section~\ref{sec:model-problems} two model problems of interest: a (possibly non-homogeneous) acoustic waveguide, and a homogeneous EM waveguide. In the same section, we provide a quick overview of DPG analysis and demonstrate that the modified Maxwell model resulting from the full envelope ansatz shares the boundedness below constant with the original Maxwell operator.
%Here, we focus on the classical (homogeneous) waveguide problem involving a single Helmholtz or Maxwell equation. Section~\ref{sec:model-problems} introduces the Helmholtz and Maxwell model problems for the homogeneous waveguide with the corresponding ultraweak DPG formulations, and it discusses the stability of the full envelope approximation.
Section~\ref{sec:helmholtz} analyzes the Helmholtz problem (main result: Theorem~\ref{thm:stability-helmholtz}), and Section~\ref{sec:maxwell} analyzes the Maxwell problem (main result: Theorem~\ref{thm:stability-maxwell}). Section~\ref{sec:numerical} presents numerical results and concludes with a discussion of the non-homogeneous EM waveguide problem.

\paragraph*{Notation} We use $(\cdot,\cdot)_{L^2}$ and $\|\cdot\|_{L^2}$ to denote the standard sesquilinear form (antilinear in the second argument) and associated norm 
on the Hilbert space $L^2$. If clear from the context, we write $(\cdot,\cdot)$ and $\|\cdot\|$ respectively. For Sobolev spaces on Lipschitz domains $A \subset \doubleIR^d$, $d \in \{1,2,3\}$,
we follow the conventions of \cite{mclean00}, i.e., for $s \ge 0$ we denote by $H^s(A)$ the closure of $C^\infty(\overline{A})$ under the norm $\|\cdot\|_{H^s(A)}$, 
$\widetilde{H}^s(A)$ is the closure of $C^\infty_0(A)$ under the norm $\|\cdot\|_{H^s(\doubleIR^d)}$, $H^{-s}(A)$ is the dual of $\widetilde{H}^s(A)$ and 
$\widetilde{H}^{-s}(A)$ the dual of $H^s(A)$. For 3D vector fields $\bfE$ on $A \subset \doubleIR^3$, we define on $\partial A$ with outer normal 
$\bfn$ the tangential trace $\gamma_t \bfE = \bfE \times \bfn$ and the tangential component $\pi_t \bfE = \bfn \times (\bfE \times \bfn)$.
$\bfH(\curl,A)$ denotes the space of $L^2(A)$ functions whose $\curl$ is in $L^2(A)$; 
we set $\bfH_0(\curl,A) = \{\bfE  \in \bfH(\curl,A)\,|\, \gamma_t \bfE = 0 \mbox{ on $\partial A$}\}$. For 
$A \subset \doubleIR^d$ we denote similarly by  
$\bfH(\div,A)$ the space of $L^2(A)$ functions whose $\div$ is in $L^2(A)$; 
we set $\bfH_0(\div,A) = \{\bfE  \in \bfH(\div,A)\,|\, \bfn \cdot \bfE = 0 \mbox{ on $\partial A$}\}$, with $\bfn$ being the outer normal vector. 
The constant $C>0$ may be different in each occurance but does not depend on critical parameters such as the length $L$
of the waveguide. The expression $A \lesssim B$ indicates the existence of $ C > 0$ such that $A \leq C B$ with the implied constant $C$ independent of critical parameters.

% Model problems and DPG
%----------------------------------
\section{Model problems and DPG formulation}
\label{sec:model-problems}
%----------------------------------
We present two model problems whose stability we analyze in the following sections.
Throughout this work, $D \subset \doubleIR^d$, $d \in \doubleIN$, is a bounded Lipschitz domain with a piecewise smooth boundary, 
$L > 0$, and we set $\Omega = D \times (0,L)$. Throughout, we assume $\ome > 0$ given but exclude certain
degenerate cases when $\omega$ coincides with an eigenvalue of a suitable transverse eigenvalue problem, 
viz., (\ref{eq:non-degeneracy}) for the acoustic case and (\ref{eq:non-degeneracy-maxwell})  for the EM problem.
%----------------------------------
\subsection{Helmholtz problem}
%----------------------------------
On $D$ let $a:D\rightarrow \operatorname{GL}({\mathbb R}^{d \times d})$ be a pointwise symmetric positive definite matrix
with $0 < \lambda_{min} \leq a \leq \lambda_{max} < \infty$ uniformly in $x \in D$. Set
\begin{equation*}
\fraka:= \left(\begin{array}{cc} a(x) & 0 \\ 0 & 1 \end{array}\right).
\end{equation*}
On $\Ome = D \times (0,L) \subset \doubleIR^{d+1}$ we consider%\footnote{do we want to consider $i\ome {\mathfrak b} p + \div \bfu$?}
\begin{subequations}
\label{eq:primal-helmholtz}
\begin{align}
\label{eq:primal-helmholtz-a}
\bi \ome \bfu + \fraka  \nabla p & = \bfg \quad \mbox {in $\Ome$}, \\
\label{eq:primal-helmholtz-b}
\bi \ome p  + \div \bfu & = f  \quad \mbox {in $\Ome$}, \\
\label{eq:primal-helmholtz-c}
p & = 0 \quad \mbox {on $\Gammai:= D \times \{0\}$}, \\
\label{eq:primal-helmholtz-d}
\bfu \cdot \bfn  & = 0 \quad \mbox {on $\Gammal:= \partial D \times \{0,L\}$}, \\
\bi\ome \bfu \cdot \bfn  + \dtn p & = 0 \quad \mbox{on $\Gammao:= D \times \{L\}$}.
\end{align}
\end{subequations}
Here, $\bfn$ denotes the outer normal vector on $\partial\Omega$. We describe the operator $\dtn$ in (\ref{eq:dtn}) in Section~\ref{sec:dtn}.
This operator ensures that waves are ``outgoing'' on $\Gammao$. That is, if one considers instead an infinite waveguide $D \times (0,\infty)$ such that the right-hand sides $f$, $\bfg$ vanish on $D \times (L,\infty)$, one requires that waves be going to the right. %To that end, we represent the operator $\dtn$ in terms of eigenfunctions of an operator.

We define the operator
\begin{equation}
\label{eq:Ahelm}
A^{\thelm} \left(\begin{array}{c} p \\
\bfu\end{array}\right) :=
\left(\begin{array}{c}
	\bi \ome \bfu + \fraka \nabla p \\
	\bi \ome p + \div \bfu \end{array}\right)
\end{equation}
with domain $D(A^{\thelm})$ that includes the boundary conditions and is given by
\begin{equation}
\begin{split}
D(A^{\thelm}) = \{(p,\bfu) \in H^1(\Omega) \times  & \bfHdiv\,|\,
 p|_{\Gammai} = 0, \ \bfu \cdot \bfn = 0 \mbox{ in $H^{-1/2}(\Gammal)$},\\
& \ \bi\ome \bfu \cdot \bfn + \dtn p = 0 \mbox{ in $\widetilde{H}^{-1/2}(\Gammao)$}\}. 
\end{split}
\label{eq:D(Ahelm)}
\end{equation}

%----------------------------------
\subsection{Maxwell problem}
%----------------------------------
Let $d = 2$. In a homogeneous medium, we consider the Maxwell system for the electric field $\bfE$ and the magnetic field $\bfH$ that satisfy
\begin{subequations}
\label{eq:primal-maxwell}
\begin{alignat}{2}
\label{eq:primal-maxwell-a}
\bfnab \times \bfE - \bi \omega \bfH & = \bff \
&& \mbox{in $\Omega  = D \times (0,L) \subset \doubleIR^3$}, \\
\label{eq:primal-maxwell-b}
\bfnab \times \bfH+ \bi \omega \bfE & = \bfg \
&& \mbox{in $\Omega \subset \doubleIR^3$}, \\
\label{eq:primal-maxwell-c}
\gamma_t \bfE & = 0 \ && \mbox{on $\Gammai$}, \\
\label{eq:primal-maxwell-d}
\gamma_t \bfE & = 0 \ && \mbox{on $\Gammal$}, \\
\label{eq:primal-maxwell-e}
\pi_t \bfH - \dtnm \bfE & = 0 && \mbox{ on $\Gammao$}. 
%\bfE, \bfH & \text{ outgoing} \ && \mbox{on $\Gammao$.}
\end{alignat}
\end{subequations}
%Here, $\gamma_t \bfE = \bfE \times \bfn$ is the tangential trace on $\partial\Omega$, $\pi_t \bfH = \bfn \times (\bfH \times \bfn)$ the tangential component, 
Here, the operator $\dtnm$ is described in Section~\ref{sec:maxwell} in (\ref{eq:dtnm}) and realizes that waves are ``outgoing'' at $z = L$. 
%We will discuss the condition to be ``outgoing'' in more detail in Section~\ref{sec:maxwell}.
Problem (\ref{eq:primal-maxwell}) can be written in operator form
\begin{equation}
\label{eq:Amaxwell}
A^{\tmaxwell} \left(\begin{array}{c} \bfE \\ \bfH\end{array}\right) =
\left(\begin{array}{c} \bff \\ \bfg \end{array}\right) \, ,
\qquad
A^{\tmaxwell} \left(\begin{array}{c} \bfE \\ \bfH\end{array}\right) :=
\left(
\begin{array}{c}
\nabla \times \bfE - i \ome \bf H \\
\nabla \times \bfH + i \ome \bfE
\end{array}
\right),  
\end{equation}
where the domain $D(A^{\tmaxwell})$ includes the boundary conditions and is given by 
\begin{align*}
D(A^{\tmaxwell}) & = \{(\bfE, \bfH) \in \bfH_{0,\Gammam}(\curl,\Ome) \times \bfH(\curl,\Ome)\,|\, \pi_t \bfH = \dtnm \bfE \mbox{ on $\Gammao$}\}, 
\end{align*}
with $\Gammam:= \partial \Ome \setminus \overline{\Gammao}$ and 
\begin{equation}
\label{eq:H0Oammaminus}
\bfH_{0,\Gammam} = \{\bfE \in \bfH(\curl,\Ome)\,|\, \gamma_t \bfE = 0 \quad \mbox{ on $\Gammam$}\}. 
\end{equation}

\subsection{Ultraweak DPG formulation and DPG essentials}
\label{sec:DPG}

%We shall denote the $L^2(\Omega)$-norm with $\Vert \cdot \Vert$ without any subindices.

Suppose we are given an injective closed operator representing a system of first-order linear Partial Differential Equations (PDEs),
\[
A\,  \colon\,  L^2(\Omega) \supset D(A) \to L^2(\Omega) \, ,
\]
where $D(A)$ is the domain of the operator incorporating (homogeneous) boundary conditions. We want to solve the problem,
$$
\mbox{ find $u \in D(A)$} \quad \mbox{ such that } \quad Au = f . 
$$
%\[
%\left\{
%\begin{array}{lll}
%u \in D(A) \\[5pt]
%Au = f \, .
%\end{array}
%\right.
%\]
The problem is trivially equivalent to the so-called {\em strong variational formulation:}
$$
\mbox{ find $u \in H_A(\Omega)$} \quad \mbox{ such that } \quad 
(Au,v) = (f,v) \quad  \forall \, v \in L^2(\Omega) \, ,
$$
%\[
%\left\{
%\begin{array}{lll}
%u \in H_A(\Omega)  \\[5pt]
%(Au,v) = (f,v) \quad  \forall \, v \in L^2(\Omega) \, ,
%\end{array}
%\right.
%\]
where $H_A(\Omega) := D(A)$ is a Hilbert space equipped with the graph norm: 
\[
\Vert u \Vert^2_{H_A} : = \Vert u \Vert^2 + \Vert A u \Vert^2 \, .
\]
The ultraweak (UW) variational formulation is obtained by integrating by parts and passing {\em all} derivatives to the test function. It reads as follows:
\begin{equation}
\label{eq:UWformulation}
\mbox{ find $u \in L^2(\Omega)$} \quad \mbox{ such that } \quad 
(u,A^\ast v) = (f,v) \quad  \forall \, v \in H_{A^\ast}(\Omega) \, ,
\end{equation}
%\be
%\left\{
%\begin{array}{lll}
%u \in L^2(\Omega) \\[5pt]
%(u,A^\ast v) = (f,v) \quad  \forall \, v \in H_{A^\ast}(\Omega) \, ,
%\end{array}
%\right.
%\label{eq:UWformulation}
%\ee
where
\[
A^\ast \, : \, L^2(\Omega) \supset D(A^\ast) \to L^2(\Omega)
\]
is the $L^2$-adjoint of the original operator in the sense of closed operator theory, assumed to be injective as well, and the test space $H_{A^\ast}(\Omega) := D(A^\ast)$ has been equipped with the scaled adjoint graph norm:
\[
\Vert v \Vert^2_{H_{A^\ast}} : = \beta^2 \Vert v \Vert^2 + \Vert A^\ast v \Vert^2
\]
with $\beta \geq 0$.
As the test functions are now more regular (and the solution less regular), the right-hand side of (\ref{eq:UWformulation}) may be upgraded to an arbitrary continuous and antilinear functional 
$l(\cdot)$ on $H_{A^\ast}(\Omega)$.

\begin{lemma}
\label{lem:uw-inf-sup}
Assume that that the operator $A$ is surjective and bounded below with a (boundedness below) constant $\alpha >0$:
\[
\alpha \Vert u \Vert \leq \Vert A u \Vert \qquad \forall u \in D(A) \, .
\]
The UW formulation~(\ref{eq:UWformulation}) is then well-posed with the inf-sup constant
\begin{equation}
\label{eq:lem:uw-inf-sup}
\inf_{0 \ne u \in H_A} \sup_{0 \ne v \in H_{A^\ast}} \frac{(Au,v)}{\|u\|\,  \|v\|_{H_{A^\ast}}} =:
\gamma \geq \left[ 1 + \left( \frac{\beta}{\alpha} \right)^2 \right]^{-\half} \, .
\end{equation}
\end{lemma}

\begin{proof}
The Closed Range Theorem for closed operators implies that adjoint $A^\ast$ is bounded below with the same constant $\alpha$
(see, e.g., \cite[Rem.~{5.19.1}]{oden-demkowicz-applied-FA}).
Take an arbitrary $u \in L^2(\Omega)$  and consider the corresponding solution $v_u$ of the adjoint problem,
$$
\mbox{ find $v_u \in D(A^\ast)$} \quad \mbox{ such that } \quad A^\ast v_u = u. 
$$
%\[
%\left\{
%\begin{array}{ll}
%v_u \in D(A^\ast) \\[5pt]
%A^\ast v_u = u \, .
%\end{array}
%\right.
%\]
We have:
\[
\Vert v_u \Vert \leq \alpha^{-1} \Vert u \Vert
\quad \Rightarrow \quad
\Vert v_u \Vert_{H_{A^\ast}}^2 =  \Vert A^\ast v_{\JMM{u}} \Vert^2 + \beta^2 \Vert v_{\JMM{u}} \Vert^2
\leq \left[ 1 + \left(\frac{\beta}{\alpha}\right)^2 \right] \Vert u \Vert^2 \, ,
\]
and, in turn,
\[
\sup_{v \in D(A^\ast)} \frac{\vert (u, A^\ast v) \vert}{\Vert v \Vert_{H_{A^\ast}}}
\geq \frac{\vert (u, u) \vert}{\Vert v_u \Vert_{H_{A^\ast}}} \geq \left[ 1 + \left(\frac{\beta}{\alpha}\right)^2 \right]^{-\half} \frac{\vert (u, u) \vert}{\Vert u \Vert}
= \left[ 1 + \left(\frac{\beta}{\alpha}\right)^2 \right]^{-\half} \Vert u \Vert \, .
\]

%Finally, the conjugate operator coincides with the strong operator $A$ and its injectivity implies
The sesquilinear form $b(u,v) := (u, A^\ast v)$ generates two bounded operators: the operator $B: L^2(\Omega) \to D(A^\ast)^\prime$ representing the operator form of the UW formulation, and its transpose $B^\prime: D(A^\ast) \to L^2(\Omega)$ that coincides with the adjoint $A^\ast$ of the closed operator $A$. The injectivity of $A^\ast$ implies the existence of the solution of (\ref{eq:UWformulation}) for any right-hand side $l \in H_{A^\ast}(\Omega)^\prime$.
\end{proof}

Note that, for $\beta = 0$, the inf-sup constant (\ref{eq:lem:uw-inf-sup}) is one.
As the continuity constant of $A^\ast:H_{A^\ast}\JMM{(\Omega)} \rightarrow L^2(\Omega)$ is one as well, we are dealing with a \emph{duality pairing}; the ideal Petrov--Galerkin Method with Optimal Test functions delivers simply the $L^2$-projection of the exact solution.
The DPG Method with Optimal Test Functions is based on extending the UW formulation to a larger class
of {\em broken} test functions $H_{A^\ast}(\Omega_h)$ \cite{Carstensen_Demkowicz_Gopalakrishnan_16}. Unfortunately, for $\beta = 0$, the form $b(u,v) = (u,A^\ast v)$ is not {\em localizable}, i.e., the adjoint test norm cannot be extended to the broken test space and {\em we must} use a positive scaling constant $\beta > 0$. In principle though, by employing a sufficiently small constant $\beta$, we can make the inf-sup constant as close to one as we wish. It is not intuitive at all that the UW formulation allows for {\em improving} the stability of the original operator $A$ in the strong setting.

Finally, the stability of the UW formulation {\em is inherited} by the {\em broken formulation} \cite{Carstensen_Demkowicz_Gopalakrishnan_16}. In the {\em ideal DPG} method, the optimal test functions are assumed to be computed exactly; the discrete inf-sup constant is bounded from below by the continuous inf-sup constant, which indicates the relevance of understanding the continuous variational problem. The ideal DPG method is merely a logical construct on the way to analyze the {\em practical DPG} method where the optimal test functions are computed approximately. The additional error resulting from the approximation of the test functions is analyzed with the help of appropriate Fortin operators \cite{DPG_Encyklopedia_18}.

\subsection{The full envelope approximation (\ref{eq:envelope})}

The analysis of the stability of the full envelope approximation method (\ref{eq:envelope}) turns out to be surprisingly simple.

\begin{lemma}
Let $\tilde{A}$ be the operator corresponding to the full envelope ansatz, i.e.,
\[
\tilde{A} \tilde{u} :=  e^{\bi kz} A(e^{-\bi kz} \tilde{u}) \, ,
\]
where $A \in \{A^{\thelm}, A^{\tmaxwell}\}$ denotes the operator corresponding to the acoustic or EM waveguide problem 
 and $ k \in \doubleIR$.
Then, the operator $\tilde{A}$ is bounded below if and only if the operator $A$ is bounded below, and the corresponding boundedness below constants are identical:
\[
 \Vert A u \Vert \ge \alpha \Vert u \Vert \quad \Leftrightarrow \quad \Vert \tilde{A} \tilde{u}\Vert  \geq \alpha \Vert \tilde{u}\Vert \, .
\]
\end{lemma}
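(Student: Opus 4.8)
The plan is to recognize that the full envelope ansatz does nothing more than conjugate $A$ by a \emph{unitary} multiplication operator on $L^2(\Omega)$, so that every norm-dependent property of the operator — in particular boundedness below and the precise value of the constant — is preserved verbatim.

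First I would introduce, for $t\in\doubleIR$, the multiplication operator $U_t:L^2(\Omega)\to L^2(\Omega)$, $(U_tw)(x,y,z):=e^{itz}w(x,y,z)$. Since $|e^{itz}|=1$ pointwise, $U_t$ is an isometric bijection of $L^2(\Omega)$ onto itself with $U_t^{-1}=U_{-t}$, and in particular $\|U_tw\|=\|w\|$ for all $w\in L^2(\Omega)$. Writing $U:=U_{-k}$ (multiplication by $e^{-ikz}$) and $U^{-1}=U_k$ (multiplication by $e^{ikz}$), the defining relation becomes $\tilde A\tilde u=U^{-1}A(U\tilde u)$, with natural domain $D(\tilde A)=\{\tilde u\in L^2(\Omega):U\tilde u\in D(A)\}=U^{-1}\bigl(D(A)\bigr)$. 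Thus $\tilde A=U^{-1}AU$ is the conjugate of the closed, densely defined, injective operator $A$ by a unitary operator, hence is itself closed, densely defined and injective, so that ``bounded below'' is meaningful for $\tilde A$.

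The estimate is then immediate: given $\tilde u\in D(\tilde A)$, set $u:=U\tilde u\in D(A)$; then $\|\tilde u\|=\|U^{-1}u\|=\|u\|$ and $\|\tilde A\tilde u\|=\|U^{-1}(Au)\|=\|Au\|$. Because $\tilde u\mapsto U\tilde u$ is a bijection of $D(\tilde A)$ onto $D(A)$, the inequality $\|\tilde A\tilde u\|\ge\alpha\|\tilde u\|$ for all $\tilde u\in D(\tilde A)$ holds if and only if $\|Au\|\ge\alpha\|u\|$ for all $u\in D(A)$; equivalently, the two boundedness-below constants $\inf_{u\ne0}\|Au\|/\|u\|$ and $\inf_{\tilde u\ne0}\|\tilde A\tilde u\|/\|\tilde u\|$ coincide. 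The only point that deserves a sentence of care — the closest thing to an obstacle — is to confirm that $D(\tilde A)=U^{-1}(D(A))$ is exactly the domain one attaches to the envelope operator: multiplication by the smooth, bounded, non-vanishing factor $e^{\pm ikz}$ (with bounded $z$-derivative) maps $\Ho$, $\bfHcurl$, $\bfHdiv$ onto themselves, carries the homogeneous traces defining $D(A^{\thelm})$ and $D(A^{\tmaxwell})$ to homogeneous traces (using $e^{ikz}\equiv1$ on $\Gammai$ and that the scalar factor commutes with the tangential traces/components on $\Gammal$), and transforms the outgoing conditions on $\Gammao$ involving $\dtn$, $\dtnm$ into conditions of the same form, which is precisely how these operators are set up. I would state this compatibility but not belabor it; once it is in place, the equivalence above is exact with no loss in the constant.
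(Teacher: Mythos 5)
Your proof is correct and is essentially the paper's own argument: the paper's proof is exactly the one-line computation $\Vert\tilde A\tilde u\Vert=\Vert e^{ikz}A(e^{-ikz}\tilde u)\Vert=\Vert A(e^{-ikz}\tilde u)\Vert\ge\alpha\Vert e^{-ikz}\tilde u\Vert=\alpha\Vert\tilde u\Vert$, i.e.\ conjugation by the unitary multiplication operator $e^{\pm ikz}$. Your additional remarks on the domain compatibility $D(\tilde A)=U^{-1}(D(A))$ are a welcome bit of extra care that the paper leaves implicit, but they do not change the route.
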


\begin{proof} We observe 
\[
\Vert \tilde{A} \tilde{u}\Vert  = \Vert e^{\bi kz} A( e^{-\bi kz} \tilde{u})  \Vert = \Vert A( e^{-\bi kz} \tilde{u})  \Vert  \geq \alpha \Vert e^{-\bi kz} \tilde{u}  \Vert = \alpha \Vert \tilde{u}  \Vert \, .
\]
\end{proof}

% Helmholtz stability analysis
%--------------------------------------------
\section{Stability in the scalar acoustic case: the Helmholtz equation}
\label{sec:helmholtz}
%--------------------------------------------
Section~\ref{sec:DPG} shows that at the heart of the analysis of the UW DPG method
is the understanding of the stability properties of the operator $A$. In this section,
we provide the stability of operator $A^{\thelm}$ of the Helmholtz problem
(\ref{eq:primal-helmholtz}) making the dependence on the length $L$ of the waveguide explicit. 
In view of Lemma~\ref{lem:uw-inf-sup}, this analysis then provides a guideline for selecting 
the parameter $\beta$ in the DPG method. 

%--------------------------------------------
\subsection{Analysis of the 1D Helmholtz equation}
\label{sec:1d-helmholtz}
%--------------------------------------------
Since the domain $\Omega = D \times (0,L)$ has product structure and the coefficients of the Helmholtz equation
are independent of the longitudinal variable, a decoupling into transversal modes is possible and leads to
a stability analysis analysis of 1D equations. In the present section, we present the necessary 1D stability results. 
The core of the stability analysis is a parameter-explicit analysis of the 1D Helmholtz equation with 
Robin boundary conditions at the right endpoint and either Dirichlet or Neumann boundary conditions at the left endpoint. 
We flag at this point that this tool will be utilized in the analysis of the EM case as well.

The analysis of the two 1D Helmholtz problems is achieved by first studying them on the reference interval $\hat{I} = (0,1)$
in Lemma~\ref{lemma:1D-inf-sup} and then on the interval $I = (0,L)$ in Lemma~\ref{lemma:ihl} by scaling. In the interest of brevity, 
we formulate the stability simultaneously for Dirichlet (corresponding to the choice $V = H^1_{(0}(\hat I)$) and  
and Neumann conditions (corresponding to $V = H^1(\hat I)$) at $x = 0$ 
in the following Lemma~\ref{lemma:1D-inf-sup}:  
\begin{lemma}[\protect{\cite[Thm.~{4.3}]{MST20}}]
\label{lemma:1D-inf-sup}
Let $\hat I:= (0,1)$.
Let $\hat \kappa \in \doubleIC_{\ge  0}:= \{z \in \doubleIC\colon \Re \JMM{z} \ge 0\}$ satisfy $|\hat \kappa| \ge \kappa_0 > 0$.
Introduce
$H^1_{(0}(\hat I):= \{u \in H^1(\hat I)\colon u(0) = 0\}$ and the sesquilinear form
\begin{equation*}
a^{1D}_{\hat \kappa,\hat I}(u,v):= (u^\prime,v^\prime)_{L^2(0,1)} + \hat \kappa^2 (u,v)_{L^2(0,1)} + \hat \kappa u(1) \overline{v}(1).
\end{equation*}
Introduce on $H^1(\hat I)$ the norm $\|u\|^2_{1,|\hat \kappa|} := \|u^\prime\|^2_{L^2(\hat I)} + |\hat \kappa|^2 \|u\|^2_{L^2(\hat I)}$.
Then there is $c > 0$ depending only on $\kappa_0>0$ 
%such that for both choices $V = H^1(\hat I)$ and $V = H^1_{(0}(\hat I)$ there holds
such that for both choices $V \in \{H^1(\hat I), H^1_{(0}(\hat I)\}$ there holds
\begin{equation}
\label{lemma:1D-inf-sup-2}
\hat \gamma_{\hat \kappa} :=\inf_{0 \ne u \in V} \sup_{0 \ne V} \frac{\Re a^{1D}_{\hat \kappa,\hat I}(u,v)}{\|u\|_{1,|\hat \kappa|} \|v\|_{1,|\hat \kappa|}} \ge \frac{1}{1+c \frac{|\Im \hat \kappa|}{1+\Re \hat \kappa}}
\end{equation}
\end{lemma}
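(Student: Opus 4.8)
The plan is to establish the equivalent statement that to every $0\neq u\in V$ one can associate a test function $v=v_u\in V$ with $\Re a^{1D}_{\hat\kappa,\hat I}(u,v)\geq \|u\|_{1,|\hat\kappa|,\hat I}^2$ and $\|v\|_{1,|\hat\kappa|,\hat I}\leq \bigl(1+c\,\Im\hat\kappa/(1+\Re\hat\kappa)\bigr)\|u\|_{1,|\hat\kappa|,\hat I}$, for then $\hat\gamma_{\hat\kappa}\geq \inf_u \Re a^{1D}_{\hat\kappa,\hat I}(u,v_u)/\bigl(\|u\|_{1,|\hat\kappa|,\hat I}\|v_u\|_{1,|\hat\kappa|,\hat I}\bigr)$ is bounded below as claimed. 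One may assume $\Im\hat\kappa\geq0$; for $\Im\hat\kappa=0$ take $v=u$, since then $\Re(\hat\kappa^2)=|\hat\kappa|^2$ and the boundary term is nonnegative. For $\Im\hat\kappa>0$ I would use the Esterhazy--Melenk device and set $v:=u+2z$, where $z\in V$ solves the \emph{dual problem} $a^{1D}_{\hat\kappa,\hat I}(w,z)=(w,g)_{L^2(\hat I)}$ for all $w\in V$, with the $L^2$ datum $g:=2(\Im\hat\kappa)^2 u$. Testing the dual equation with $w=u$ gives $a^{1D}_{\hat\kappa,\hat I}(u,z)=2(\Im\hat\kappa)^2\|u\|_{L^2}^2$, hence
\[
\Re a^{1D}_{\hat\kappa,\hat I}(u,v)=\|u'\|_{L^2}^2+\bigl((\Re\hat\kappa)^2+3(\Im\hat\kappa)^2\bigr)\|u\|_{L^2}^2+(\Re\hat\kappa)|u(1)|^2\;\geq\;\|u\|_{1,|\hat\kappa|,\hat I}^2,
\]
so the whole difficulty is transferred to bounding $\|z\|_{1,|\hat\kappa|,\hat I}$.

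To make sense of $z$ I would first record well-posedness: for $\hat\kappa\in\doubleIC_{\ge0}$ with $|\hat\kappa|\geq1$ the form $a^{1D}_{\hat\kappa,\hat I}(\cdot,\cdot)+(1+|\hat\kappa|^2)(\cdot,\cdot)_{L^2(\hat I)}$ is coercive on $V$ (its real part dominates $\|u'\|_{L^2}^2+\|u\|_{L^2}^2$, using $\Re\hat\kappa\geq0$ and $\Re(\hat\kappa^2)\geq-|\hat\kappa|^2$), while the shift $(1+|\hat\kappa|^2)(\cdot,\cdot)_{L^2}$ is $V\to V'$ compact; so the associated operator is Fredholm of index zero. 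It is injective because testing the homogeneous equation with $u$ and taking the imaginary part forces $(\Im\hat\kappa)\bigl(2(\Re\hat\kappa)\|u\|_{L^2}^2+|u(1)|^2\bigr)=0$, whence $u\equiv0$ directly if $\Re\hat\kappa>0$, and otherwise $u(1)=0$, then $u'(1)=-\hat\kappa u(1)=0$ from the natural condition at $x=1$, so that the ODE yields $u\equiv0$; the case $\Im\hat\kappa=0$ is handled by the real part alone. Consequently the operator and its transpose are boundedly invertible, and the dual problem has a unique solution for every $g\in L^2(\hat I)$.

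The decisive estimate is $\|z\|_{1,|\hat\kappa|,\hat I}\lesssim (1+\Re\hat\kappa)^{-1}\|g\|_{L^2(\hat I)}$, which I would derive from the explicit one-dimensional Green's function. The dual problem is $-z''+\overline{\hat\kappa}^{\,2}z=g$ on $(0,1)$ with the natural condition $z'(1)+\overline{\hat\kappa}\,z(1)=0$ and either $z(0)=0$ or $z'(0)=0$; its Green's function is assembled from $\psi_1(x)=e^{\overline{\hat\kappa}(1-x)}$ and $\phi_0(x)\in\{\sinh(\overline{\hat\kappa}x),\cosh(\overline{\hat\kappa}x)\}$, with constant Wronskian $-\overline{\hat\kappa}\,e^{\overline{\hat\kappa}}$. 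Using $|\sinh(\overline{\hat\kappa}s)|^2=\sinh^2((\Re\hat\kappa)s)+\sin^2((\Im\hat\kappa)s)$, $|\cosh(\overline{\hat\kappa}s)|^2=\cosh^2((\Re\hat\kappa)s)-\sin^2((\Im\hat\kappa)s)$, and $\max(x,y)\geq|x-y|$, one obtains, uniformly in the two choices at $x=0$,
\[
|G(x,y)|\lesssim \frac{e^{-(\Re\hat\kappa)|x-y|}}{|\hat\kappa|}\,,\qquad |\partial_x G(x,y)|\lesssim e^{-(\Re\hat\kappa)|x-y|}\,;
\]
since $\sup_x\int_0^1 e^{-(\Re\hat\kappa)|x-y|}\,dy\lesssim (1+\Re\hat\kappa)^{-1}$, Schur's test gives $\|z\|_{L^2}\lesssim |\hat\kappa|^{-1}(1+\Re\hat\kappa)^{-1}\|g\|_{L^2}$ and $\|z'\|_{L^2}\lesssim (1+\Re\hat\kappa)^{-1}\|g\|_{L^2}$, hence the claimed smoothing bound. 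Finally, inserting $g=2(\Im\hat\kappa)^2 u$ and using $\|u\|_{L^2}\leq |\hat\kappa|^{-1}\|u\|_{1,|\hat\kappa|,\hat I}$ together with $(\Im\hat\kappa)^2/|\hat\kappa|\leq \Im\hat\kappa$ gives $\|z\|_{1,|\hat\kappa|,\hat I}\lesssim \bigl(\Im\hat\kappa/(1+\Re\hat\kappa)\bigr)\|u\|_{1,|\hat\kappa|,\hat I}$, so $\|v\|_{1,|\hat\kappa|,\hat I}\leq \bigl(1+c\,\Im\hat\kappa/(1+\Re\hat\kappa)\bigr)\|u\|_{1,|\hat\kappa|,\hat I}$ and $v\neq0$; combined with the lower bound this proves the stated inf--sup inequality.

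The step I expect to be the real obstacle is the smoothing estimate. Attacking it directly with a Rellich--Morawetz multiplier $2xz'$ produces, for genuinely complex $\hat\kappa$, a term proportional to $(\Re\hat\kappa)(\Im\hat\kappa)\,\Im\!\int_0^1 x\,z\,\overline{z'}\,dx$ that cannot be absorbed unless $\Re\hat\kappa$ is small; and using only the energy identity yields merely $\|z\|_{1,|\hat\kappa|,\hat I}\lesssim (\Re\hat\kappa)^{-1}\|g\|_{L^2}$, which degenerates precisely in the propagative regime $\Re\hat\kappa\to0$, where one must exploit the gain of one power of $|\hat\kappa|$ when passing from $L^2$ data to an $H^1$ solution. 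Basing the estimate on the explicit kernel makes this gain --- and the exponential decay in the dissipative regime --- transparent, and in one dimension it is elementary; the remaining work is just careful bookkeeping of constants and checking that nothing hinges on the choice between $V=H^1(\hat I)$ and $V=H^1_{(0}(\hat I)$.
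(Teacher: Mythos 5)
Your proposal is correct, and its overall architecture coincides with the paper's (which follows \cite[Thm.~4.3]{MST20}): given $u$, build a test function $v=u+(\text{correction})$, where the correction solves a dual problem with $L^2$ datum whose real part is $2(\Im\hat\kappa)^2 u$, so that $\Re a^{1D}_{\hat\kappa,\hat I}(u,v)\ge\|u\|^2_{1,|\hat\kappa|,\hat I}$ and everything reduces to an a priori bound for the correction. (The paper takes datum $\alpha^2 u$ with $\alpha^2=|\hat\kappa|^2-\hat\kappa^2=2(\Im\hat\kappa)^2-2i\Re\hat\kappa\,\Im\hat\kappa$ and $v=u+z$; your purely real datum with $v=u+2z$ is an equivalent variant.) Where you genuinely diverge is in the key technical lemma: the paper proves the stability bound for the dual problem (its Lemma~\ref{lemma:MST-lemma-4.2}) by splitting into the regimes $|\Im\hat\kappa|\le\beta\Re\hat\kappa$ (handled by the elementary inf-sup constant $\Re\hat\kappa/|\hat\kappa|$ of Lemma~\ref{lemma:MST-lemma-4.1}) and $|\Im\hat\kappa|\ge\beta\Re\hat\kappa$ (handled by the Rellich--Morawetz multiplier $xz'$ plus energy identities), and correspondingly splits the main proof into $\Re\hat\kappa\ge 1/\sqrt2$ and $\Re\hat\kappa\le1/\sqrt2$; you instead derive the sharper, uniform smoothing estimate $\|z\|_{1,|\hat\kappa|,\hat I}\lesssim(1+\Re\hat\kappa)^{-1}\|g\|_{L^2}$ directly from the explicit one-dimensional Green's function built from $e^{\overline{\hat\kappa}(1-x)}$ and $\sinh/\cosh(\overline{\hat\kappa}x)$, with constant Wronskian $-\overline{\hat\kappa}e^{\overline{\hat\kappa}}$ and Schur's test on the kernel bound $e^{-\Re\hat\kappa|x-y|}/|\hat\kappa|$ (your bounds check out, e.g.\ $|\sinh(\overline{\hat\kappa}y)|\le\cosh(\Re\hat\kappa\,y)\le e^{\Re\hat\kappa\,y}$). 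What your route buys is a case-free argument and a bound that is in fact sharper than the paper's Lemma~\ref{lemma:MST-lemma-4.2} (it matches Corollary~\ref{cor:MST-lemma-4.2}, which the paper only recovers a posteriori from the inf-sup constant); what it costs is generality --- the Green's-function computation is strictly one-dimensional, whereas the multiplier technique is the version that survives in the 2D/3D setting of \cite{MST20} from which the paper imports the result. Two cosmetic remarks: the Fredholm paragraph is redundant once the Green's function is exhibited (existence and uniqueness then follow directly), and your reduction to $\Im\hat\kappa\ge0$ by conjugation is fine since $a^{1D}_{\overline{\hat\kappa},\hat I}(\overline u,\overline v)=\overline{a^{1D}_{\hat\kappa,\hat I}(u,v)}$ and the norms are conjugation-invariant.
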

\begin{proof}
The proof follows essentially from \cite[Thm.~{4.3}]{MST20}, where a similar sesquilinear form in 2D and 3D is considered. Details are given
in Appendix~\ref{appendix:proof-of-lemma-1D-inf-sup}.
\end{proof}
By scaling, we can infer from Lemma~\ref{lemma:1D-inf-sup} stability estimates for 1D Helmholtz problems on domains of length $L > 0$. To do 
so in Lemma~\ref{lemma:ihl}, we 
introduce  for $\kappa \in \doubleIC$ and an interval $I = (0,L)$ the sesquilinear form 
$a^{1D}_{\kappa}$ and the norm $\|\cdot\||_{1,|\kappa|}$ by 
\begin{align}
a^{1D}_{\kappa}(u,v) &:= (u^\prime,v^\prime)_{L^2(I)} + \kappa^2 (u,v)_{L^2(I)} + \kappa u(L) \overline{v}(L), \\
\|q\|^2_{1,|\kappa|}& := \|q^\prime\|^2_{L^2(I)} + |\kappa|^2 \|q\|^2_{L^2(I)} \, .
\end{align}
Armed with this notation, we study 1D Helmholtz problems with either Dirichlet (corresponding to $V = H^1_{(0}(I)$) or Neumann (corresponding to $V = H^1(I)$) boundary conditions at the left endpoint $x = 0$:
\begin{lemma}
\label{lemma:ihl}
Let $I = (0,L)$ and $V = H^1(I)$ or $V = H^1_{(0}(I):= \{w \in H^1(I)\colon w(0) = 0\}$. Let $\kappa \in \doubleIC_{\ge 0}$.  
Fix $c_0 > 0$. 
Consider the following two problems:
Find $q_1$, $q_2 \in V$ such that
\[
\begin{alignedat}{3}
%(q^\prime_1,w^\prime)_{L^2(I)} + \kappa^2 (q_1,w)_{L^2(I)} + \kappa q_1(L) \overline{w(L)} & = (f,w)_{L^2(I)} \quad
a^{1D}_{\kappa}(q_1,w) &= (f,w)_{L^2(I)}  &\quad \forall w \in V \, , \\
%(q^\prime_2,w^\prime)_{L^2(I)} + \kappa^2 (q_2,w)_{L^2(I)} + \kappa q_2(L) \overline{w(L)} & = (f,w^\prime)_{L^2(I)} \quad
a^{1D}_{\kappa}(q_1,w) &= (f,w^\prime)_{L^2(I)}  &\quad \forall w \in V \, .
%\forall w \in V.
\end{alignedat}
\]
Then the following holds: 
\begin{enumerate}[(i)]
\item
\label{item:ihl-0} There are $C$, $c > 0$ depending only on $c_0$ such that if $L |\kappa| \ge c_0$ then with
\begin{equation}
\label{eq:lemma:ihl-inf-sup}
\gamma:= \frac{1}{1 + c \frac{|\Im (L \kappa)|}{1 + \Re (L \kappa)}}
\end{equation}
there holds
\begin{align*}
\|q_1\|_{1,|\kappa|} &\leq C \frac{1}{\gamma |\kappa|}  \|f\|_{L^2(I)} \, ,
&
\|q_2\|_{1,|\kappa|} &\leq C \gamma^{-1}  \|f\|_{L^2(I)} \, .
\end{align*}
\item
\label{item:ihl-i}
If $\kappa \in \bi \doubleIR$ and $|\kappa| L \ge c_0$, then for a constant $C > 0$ depending only on $c_0$
\begin{align*}
\|q_1\|_{1,|\kappa|} &\leq C L  \|f\|_{L^2(I)} \, ,
&
\|q_2\|_{1,|\kappa|} &\leq C L |\kappa| \|f\|_{L^2(I)} \, .
\end{align*}
\item
\label{item:ihl-ii}
If $\kappa >0$  and $\kappa L \ge c_0$, then
for a constant $C > 0$ depending only on $c_0$
\begin{align*}
\|q_1\|_{1,\kappa} &\leq C \kappa ^{-1}  \|f\|_{L^2(I)} \, ,
&
\|q_2\|_{1,\kappa }&\leq C \|f\|_{L^2(I)} \, .
\end{align*}
\end{enumerate}
\end{lemma}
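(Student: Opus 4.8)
\emph{Reduction to the reference interval.} The plan is to rescale the problem to $\hat I=(0,1)$ and then apply Lemma~\ref{lemma:1D-inf-sup}. For $u\in V$ set $\hat u(\hat x):=u(L\hat x)$, and likewise $\hat f(\hat x):=f(L\hat x)$. The affine change of variables $x=L\hat x$ gives the identities
\[
a^{1D}_{\kappa}(u,v)=L^{-1}\,a^{1D}_{\hat\kappa,\hat I}(\hat u,\hat v),\qquad
\hat\kappa:=L\kappa,\qquad
\|u\|_{1,|\kappa|}=L^{-1/2}\|\hat u\|_{1,|\hat\kappa|,\hat I},
\]
together with $\|\hat f\|_{L^2(\hat I)}=L^{-1/2}\|f\|_{L^2(I)}$. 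Substituting these into the two weak formulations turns the equation for $q_1$ into $a^{1D}_{\hat\kappa,\hat I}(\hat q_1,\hat w)=L^{2}(\hat f,\hat w)_{L^2(\hat I)}$ and the one for $q_2$ into $a^{1D}_{\hat\kappa,\hat I}(\hat q_2,\hat w)=L(\hat f,\hat w')_{L^2(\hat I)}$, for all $\hat w\in V$ (the same space, now on $\hat I$). Since $\Re\kappa\ge 0$ we have $\hat\kappa\in\doubleIC_{\ge 0}$, and in the main regime $|\hat\kappa|=L|\kappa|\ge 1$, so Lemma~\ref{lemma:1D-inf-sup} applies. If only $L|\kappa|\ge c_0$ with $c_0\le 1$ is assumed, the residual range $c_0\le L|\kappa|<1$ is a short, fixed-frequency problem that is handled by a routine Fredholm/compactness argument: $a^{1D}_{\hat\kappa,\hat I}$ is a compact perturbation of a coercive form and is injective (uniqueness by testing with $\hat u$ and inspecting the real and imaginary parts, resp.\ the resulting ODE, using $\hat u(1)=0$ when $\hat\kappa\in i\doubleIR$), so its solution operator is bounded uniformly over that compact parameter range.

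\emph{From the inf-sup bound to an a priori estimate.} Lemma~\ref{lemma:1D-inf-sup} bounds the \emph{real-part} inf-sup; since $a^{1D}_{\hat\kappa,\hat I}(\cdot,\cdot)$ is antilinear in its second argument, rotating the test function by a unimodular factor shows this equals the ordinary modulus inf-sup, and complex conjugation of the form shows its value is the same for $\hat\kappa$ and $\overline{\hat\kappa}$, so it may be used with $|\Im\hat\kappa|$ in place of $\Im\hat\kappa$. Hence the inf-sup constant of $a^{1D}_{\hat\kappa,\hat I}$ with respect to $\|\cdot\|_{1,|\hat\kappa|,\hat I}$ is bounded below by $\gamma$ of \eqref{eq:lemma:ihl-inf-sup}, and for the solution of $a^{1D}_{\hat\kappa,\hat I}(\hat q,\cdot)=\hat F$ one obtains $\|\hat q\|_{1,|\hat\kappa|,\hat I}\le\gamma^{-1}\,\|\hat F\|_{(V,\|\cdot\|_{1,|\hat\kappa|,\hat I})'}$; existence of $\hat q$ follows from the Fredholm alternative together with this uniqueness.

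\emph{Estimating the right-hand sides and scaling back.} For $q_1$ use $|L^{2}(\hat f,\hat w)_{L^2(\hat I)}|\le L^{2}\|\hat f\|_{L^2(\hat I)}\|\hat w\|_{L^2(\hat I)}\le L^{2}|\hat\kappa|^{-1}\|\hat f\|_{L^2(\hat I)}\|\hat w\|_{1,|\hat\kappa|,\hat I}$, where the factor $|\hat\kappa|^{-1}$ comes from $\|\hat w\|_{L^2(\hat I)}\le|\hat\kappa|^{-1}\|\hat w\|_{1,|\hat\kappa|,\hat I}$; this gain $|\hat\kappa|^{-1}=L^{-1}|\kappa|^{-1}$ is exactly what distinguishes $q_1$ from $q_2$. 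For $q_2$ use instead $|L(\hat f,\hat w')_{L^2(\hat I)}|\le L\|\hat f\|_{L^2(\hat I)}\|\hat w'\|_{L^2(\hat I)}\le L\|\hat f\|_{L^2(\hat I)}\|\hat w\|_{1,|\hat\kappa|,\hat I}$. Inserting these into the a priori bound and undoing the scaling via $\|\hat q_i\|_{1,|\hat\kappa|,\hat I}=L^{1/2}\|q_i\|_{1,|\kappa|}$, $|\hat\kappa|=L|\kappa|$ and $\|\hat f\|_{L^2(\hat I)}=L^{-1/2}\|f\|_{L^2(I)}$, the powers of $L$ collapse and one arrives at $\|q_1\|_{1,|\kappa|}\le C\gamma^{-1}|\kappa|^{-1}\|f\|_{L^2(I)}$ and $\|q_2\|_{1,|\kappa|}\le C\gamma^{-1}\|f\|_{L^2(I)}$; this establishes (i) (in fact in the sharper form with $|\kappa|^{-1}$ rather than $L|\kappa|^{-1}$ on $q_1$, which is what we use below). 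Part (ii) follows by inserting the value of $\gamma$: for $\kappa\in i\doubleIR$ one has $\Re(L\kappa)=0$ and $|\Im(L\kappa)|=L|\kappa|$, so $\gamma=(1+cL|\kappa|)^{-1}$ and, since $L|\kappa|$ is bounded below, $\gamma^{-1}\le CL|\kappa|$, giving $\|q_1\|_{1,|\kappa|}\le CL\|f\|$, $\|q_2\|_{1,|\kappa|}\le CL|\kappa|\|f\|$. Part (iii) follows likewise: for $\kappa>0$ one has $\Im(L\kappa)=0$, hence $\gamma=1$ and $\|q_1\|_{1,\kappa}\le C\kappa^{-1}\|f\|$, $\|q_2\|_{1,\kappa}\le C\|f\|$.

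\emph{Main obstacle.} There is no real analytical difficulty here; the work is bookkeeping. One must keep the powers of $L$ and $|\kappa|$ straight through the affine change of variables and, above all, through the two different dual norms of the right-hand-side functionals, the $|\hat\kappa|^{-1}$ gain for $q_1$ versus none for $q_2$ being the crucial point. The only subtlety is the passage from the real-part inf-sup of Lemma~\ref{lemma:1D-inf-sup} to the full a priori estimate and the handling of $\hat\kappa$ with $\Im\hat\kappa$ of either sign via conjugation symmetry (plus, if a genuinely short interval $L|\kappa|<1$ is to be covered, the separate compactness argument indicated above).
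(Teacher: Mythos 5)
Your proposal is correct and follows essentially the same route as the paper's proof: rescale to the reference interval $\hat I=(0,1)$ with $\hat\kappa=L\kappa$, invoke Lemma~\ref{lemma:1D-inf-sup} for the inf-sup constant $\gamma$, bound the two right-hand-side functionals in the dual of $\|\cdot\|_{1,|\hat\kappa|,\hat I}$ (with the crucial $|\hat\kappa|^{-1}$ gain for $q_1$), and scale back; you even arrive at the same sharper form $\|q_1\|_{1,|\kappa|}\le C\gamma^{-1}|\kappa|^{-1}\|f\|$ that the paper's computation actually delivers and that is needed for parts (\ref{item:ihl-i}) and (\ref{item:ihl-ii}). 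The additional care you take with the passage from the real-part to the modulus inf-sup, the conjugation symmetry in $\Im\hat\kappa$, and the residual range $c_0\le L|\kappa|<1$ only makes explicit points the paper leaves implicit.
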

\begin{proof}
Rescaling the equation posed on $(0,L)$ to an equation posed on $(0,1)=:\hat I$, we get by denoting the rescaled functions with a $\hat{\cdot}$
and $\hat \kappa := L \kappa $ and spaces $\hat V = H^1(\hat I)$ if $V = H^1(I)$ and $\hat V = H^1_{(0}(\hat I)$ if $V = H^1_{(0}(I)$: 
\begin{align*}
(\hat{q}^\prime_1,w^\prime)_{L^2(\hat I)} + \hat{\kappa}^2 (\hat{q}_1,w)_{L^2(\hat I)} + \hat{\kappa} \hat{q}_1(1) \overline{w(1)} & = L^2 (\hat{f},w)_{L^2(I)} \quad
\forall w \in \hat V, \\
(\hat{q}^\prime_2,w^\prime)_{L^2(\hat I)} + \hat{\kappa }^2 (\hat{q}_2,w)_{L^2(\hat I)} + \hat{\kappa}  \hat{q}_2(1) \overline{w(1)} & = L (\hat{f},w^\prime)_{L^2(I)} \quad
\forall w \in \hat V.
\end{align*}
In terms of the inf-sup constant $\hat \gamma_{\hat \kappa}$ of (\ref{lemma:1D-inf-sup-2}), i.e.,
\begin{equation*}
\gamma:= \inf_{q \in \hat V} \sup_{w \in \hat V} \frac{|a^{1D}_{\hat \kappa, \hat I}(q,w)|}{\|q\|_{1,|\hat{\kappa}|}\|w\|_{1,|\hat{\kappa}|}}
\end{equation*}
we have
\begin{align*}
\|\hat{q}_1\|_{1,|\hat{\kappa}|} &\leq 
 \frac{1}{\gamma} \sup_{v \in \hat{V}} \frac{|(L^2 \hat f, v)_{L^2(\hat I)}|}{\|v\|_{1,|\JMM{\hat \kappa}|}} 
\leq 
 \frac{1}{\gamma} \sup_{v \in \hat{V}} \frac{L^2 \|\hat f\|_{L^2(\hat I)} \|v\|_{L^2(\hat I)} }{\|v\|_{1,|\JMM{\hat \kappa}|}} 
\leq 
\frac{C} {\gamma  |\hat{\kappa}|} \|L^2 \hat f\|_{L^2(\hat I)}, \\
\|\hat{q}_2\|_{1,|\hat{\kappa}|} &\leq C \gamma^{-1}  \|L \hat f\|_{L^2(\hat I)}
\end{align*}
with $C > 0$ depending only on a lower bound for $|\hat{\kappa}|$. Scaling back to $(0,L)$ yields
\begin{align*}
\|q_1\|_{1,|\kappa|}
&\sim L^{-1/2} \|\hat{q}_1\|_{1,|\hat{\kappa}|}
\leq C L^2 L^{-1/2}  \gamma^{-1} |\hat{\kappa}|^{-1} \|\hat{f}\|_{L^2(\hat I)} \leq C L \gamma^{-1} |\hat{\kappa}|^{-1} \|f\|_{L^2(I)}, \\
\|q_2\|_{1,|\kappa|} & \sim L^{-1/2} \|\hat{q}_2\|_{1,|\hat{\kappa}|}
\leq C L^{1/2} \gamma^{-1}  \|\hat{f}\|_{L^2(\hat I)} \leq C \gamma^{-1}\|f\|_{L^2(I)} .
\end{align*}

\emph{Proof of (\ref{item:ihl-0}):}
By Lemma~\ref{lemma:1D-inf-sup}, $\gamma$ has the form (\ref{eq:lemma:ihl-inf-sup}). The result follows.

\emph{Proof of (\ref{item:ihl-i}):}
For $\kappa \in \bi \doubleIR$, the inf-sup constant $\gamma$ of (\ref{eq:lemma:ihl-inf-sup}) satisfies $\gamma \sim |\hat{\kappa }|^{-1}$
%by \cite[Thm.~{4.2}]{Ihlenburg}
with an implied constants depending on a lower bound for $|\hat{\kappa }|$. The result follows.

\emph{Proof of (\ref{item:ihl-ii}):}
For $\kappa >0$, the inf-sup constant $\gamma$ of (\ref{eq:lemma:ihl-inf-sup}) satisfies $\gamma = 1$.  
The result follows.
\end{proof}

%%--------------------------------------------
%\subsection{Helmholtz equation}
%%--------------------------------------------
%Consider a bounded Lipschitz domain $D \subset {\mathbb R}^d$, $d \in \doubleIN$, and $L > 0$. Set $\Ome:= D \times (0,L)$.
%On $D$ let $a:D\rightarrow \operatorname{GL}({\mathbb R}^{d \times d})$ be a pointwise SPD matrix
%with $0 < \lambda_{min} \leq a \leq \lambda_{max} < \infty$ uniformly in $x \in D$. Set
%\begin{equation*}
%\fraka:= \left(\begin{array}{cc} a(x) & 0 \\ 0 & 1 \end{array}\right).
%\end{equation*}
%On $\Ome$ we consider%\footnote{do we want to consider $i\ome {\mathfrak b} p + \div \bfu$?}
%\begin{subequations}
%\label{eq:primal-helmholtz}
%\begin{align}
%i \ome \bfu + \fraka  \nabla p & = f \quad \mbox {in $\Ome$} \\
%i \ome p  + \div \bfu & = \bfg  \quad \mbox {in $\Ome$} \\
%p & = 0 \quad \mbox {on $\Gamma_i:= D \times \{0\}$} \\
%\bfu \cdot \bfn  & = 0 \quad \mbox {on $\Gamma_l:= \partial D \times \{0,L\}$} \\
%i\ome \bfu \cdot \bfn  - \dtn p & = 0 \quad \mbox{ on $\Gamma_o:= D \times \{L\}$}.
%\end{align}
%\end{subequations}
%Here, $\bfn$ denote the outer normal vector. We describe the DtN-operator next. This operator ensure that
%waves are ``outgoing'' on $\Gamma_o$. That is, if one considers instead an infinite waveguide
%$D \times (0,\infty)$ such that the right-hand sides $f$, $\bfg$ vanish on $D \times (L,\infty)$ one requires that
%waves be going to the right. To that end, we reprepresent the operator $\dtn$ in terms of eigenfunctions of an operator.
%
%--------------------------------------------
\subsection{The \texorpdfstring{$\dtn$}{DtN} operator}
\label{sec:dtn}
%--------------------------------------------
We describe the $\dtn$-operator at the outflow boundary $\Gammao$. This is achieved in terms of a modal decomposition
obtained by a suitable eigenvalue problem in transverse direction.

Let $(\lambda_n,\varphi_n)_{n \in \doubleIN} \subset \doubleIR^+ \times H^1(D)\setminus\{0\}$ 
be the eigenpairs of the operator $u \mapsto -\div (a \nabla u)$, i.e.,
\begin{subequations}
\label{eq:helmholtz-eigenpairs}
\begin{alignat}{2}
-\div (a \nabla \varphi_n) & = \lambda_n \varphi_n \quad
&& \mbox{in $D$}, \\
\bfn \cdot a \nabla \varphi_n & = 0 \quad
&& \mbox{on $\partial D$}
\end{alignat}
\end{subequations}
with the normalization $\|\varphi_n\|_{L^2(D)} = 1$ for all $n \in \doubleIN$. Here, the normal 
vector $\bfn$ is the outer normal on $\partial D$. 
We have the orthogonalities
\begin{equation}
\label{eq:orthogonalities}
(\varphi_n,\varphi_m)_{L^2(D)}  = \delta_{nm},
\qquad
(a \nabla \varphi_n,\nabla \varphi_m)_{L^2(D)}  = \delta_{nm} \lambda_n.
\end{equation}
The positive values $\lambda_n$ are assumed sorted in ascending order and listed according to their multiplicity. We collect
them in the spectrum $\sigma = \{\lambda_n\colon n \in \doubleIN\} $.  It is convenient to derive the 
operator $\dtn$ in (\ref{eq:primal-helmholtz-d})
using the second-order formulation for $(f,\bfg) = (0,0)$, i.e., to consider 
\begin{align*}
-\div (\fraka \nabla p) - \ome^2 p & = 0 \quad \mbox{ on $D \times (L,\infty).$}
\end{align*}
Making the ansatz $p(x,z) = \sum_n p_n(z) \varphi_n(x)$ yields the 1D equations
\begin{align*}
-p_n^{\prime\prime}(z) + (\lambda_n - \ome^2) p_n & = 0 \quad\mbox{ on $(L,\infty)$}
\end{align*}
with the fundamental solutions $e^{\pm \kappa_n z}$, where
\begin{equation}
\label{eq:helmholtz-kappa_n}
\kappa_n:= \sqrt{\lambda_n -\ome^2}
\end{equation}
and the square root is taken to be the principal branch.
\JMM{Upon w}riting $p|_{\Gammao} = \sum_{n} p_n(L) \varphi_n(x)$, the ``outgoing'' solution is defined
on $(L,\infty)$ to be
$$
p(x,z) = \sum_{n} p_n(L) \varphi_n(x) e^{-\kappa_n (z-L)}.
$$
The operator $\dtn$ is given by $p|_{\Gammao} \mapsto (\partial_z p(x,z))|_{z = L}$, viz.,
\begin{align}
\label{eq:dtn}
\dtn p & := - \sum_{n} p_n \kappa_n \varphi_n(x),
\qquad p_n := (p,\varphi_n)_{L^2(D)}.
\end{align}
The indices $n \in \doubleIN$ with $\kappa_n > 0$ are called \emph{evanescent modes}, those with
$\kappa_n \in \bi \doubleIR\setminus\{0\}$ are the \emph{propagating modes}. We assume throughout that
\begin{equation}
\label{eq:non-degeneracy}
\kappa_n \ne 0 \qquad \forall n \in \doubleIN
\end{equation}
so that we can write $\doubleIN = \IP \dot\cup \IE$ with the finite set $\IP$ of propagating modes and the infinite set $\IE$ of evanescent modes:
\begin{equation}
\IP:= \{n \in \doubleIN\,|\, \kappa_n \in \bi \doubleIR\},
\qquad
\IE:= \{n \in \doubleIN\,|\, \kappa_n > 0\}.
\end{equation}
Concerning the mapping properties of $\dtn$, we have
\begin{align*}
\dtn: H^{1/2}(\Gammao) \rightarrow \widetilde{H}^{-1/2}(\Gammao):= (H^{1/2}(\Gammao))^\prime
\end{align*}
is bounded, linear, which follows from the representation of $\dtn$:
Since $H^{1/2}(\Gammao)$ is the interpolation space between $L^2(\Gammao)$ and $H^1(\Gammao)$ (see, e.g., \cite{mclean00}), 
we have the norm equivalence $\|p\|^2_{H^{1/2}(\Gammao)} \sim \sum_{n} |p_n|^2 \sqrt{\lambda_n}$, where 
$p = \sum_n p_n \varphi_n$ with $p_n = (p,\varphi_n)_{L^2(D)}$. 
Hence, writing $p$, $q \in H^{1/2}(\Gammao)$ in the form $p = \sum_{n} p_n \varphi_n$ and $q = \sum_{n} q_n \varphi_n$, we estimate 
%(convergence in $H^{1/2}(\Gammao)$, which means $\sum_{n} |p_n|^2 \sqrt{\lambda_n}  < \infty$, $\sum_{n} |q_n|^2 \sqrt{\lambda_n} < \infty$)
\begin{align*}
\left| \langle \dtn p, q\rangle\right|  & = \left| - \sum_{n} \kappa_n  p_n \overline{q}_n \right|
\lesssim \|p\|_{H^{1/2}(\Gammao)} \|q\|_{H^{1/2}(\Gammao)} \, .
\end{align*}
%%------------------------------------------
%\subsection{An auxiliary problem}
%%------------------------------------------
%We have to analyze
%\begin{align}
%A \left(\begin{array}{c} q \\ \bfv \end{array}\right)  = \left(\begin{array}{c} \bff \\ g \end{array}\right)
%\end{align}
%for given $\bff$, $g \in L^2(\Ome)$.
%
%-------------------------------------
\subsection{Stability estimates for \texorpdfstring{$A^{\thelm}$}{A-helm}}
%-------------------------------------
Introduce $H^1_{\Gammai}:= \{v \in H^1(\Omega)\,|\, v|_{\Gammai} = 0\}$. In the ensuing analysis, we will need the following observation
about the sequences $\{\lambda_n\}_n$  and $\{\kappa_n\}_n$:
Noting that there are only finitely many propagating modes, that $\lambda_n \rightarrow \infty$ for $n \rightarrow \infty$  
and that we assumed (\ref{eq:non-degeneracy}) we have, 
for a constant that depends on $\ome$, 
\begin{subequations}
\label{eq:lambda-tildelambda}
\begin{align}
\label{eq:lambda-tildelambda-a}
\max_{n \in \IP} \left( |\kappa_n|^{-1} + |\kappa_n|\right) & \leq C  ,\\
\label{eq:lambda-tildelambda-b}
\max_{n \in \IP} |\sqrt{\lambda_n}| & \leq C  ,\\
\label{eq:lambda-tildelambda-c}
\max_{n \in \doubleIN} \frac{|\sqrt{\lambda_n}|}{|\kappa_n|} & \leq C
\end{align}
\end{subequations}

%---------------------------
\begin{lemma}
\label{lemma:L2-estimate} 
Let $c_0 > 0$ and $L \ge c_0$, assume (\ref{eq:non-degeneracy}). 
There is $C > 0$ independent of $L$ (but possibly dependent on $\omega$) such that for 
$f \in L^2(\Omega)$ the solution $p \in H^1_{\Gammai}$ of
\begin{align}
\label{eq:lemma:L2-estimate-weak-form}
(\fraka \nabla p, \nabla v)_{L^2(\Ome)} - \ome^2 (p,v)_{L^2(\Ome)} -  \langle  \dtn p,v\rangle = (f,v)_{L^2(\Ome)}
\quad \forall v\in H^1_{\Gammai}
\end{align}
satisfies%, for a $C > 0$ independent of $L$ (but possibly dependent on $\omega$),  
\begin{align*}
\|p\|_{H^1(\Ome)} \leq C L \|f\|_{L^2(\Ome)}.
\end{align*}
\end{lemma}
\begin{proof}
Make the ansatz
$
%\displaystyle
p(x,z) = \sum_n p_n(z) \varphi_n(x)
$
and set
$\displaystyle
f_n(z):= (f(\cdot,z),\varphi_n)_{L^2(D)}.
$
By orthogonality properties of the functions $\{\varphi_n\}_n$, we have
\begin{align*}
\|p\|^2_{L^2(\Ome)} & = \sum_{n} \|p_n\|^2_{L^2(0,L)}, 
& 
\|\sqrt{a} \nabla_x p\|^2_{L^2(\Ome)} & = \sum_{n} \JMM{\lambda_n} \|p_n\|^2_{L^2(0,L)}, \\
\| \partial_z p\|^2_{L^2(\Ome)} & = \sum_{n}  \|p^\prime_n\|^2_{L^2(0,L)}, 
& 
\|f\|^2_{L^2(\Ome)} & = \sum_{n} \|f_n\|^2_{L^2(0,L)}.
\end{align*}
Testing (\ref{eq:lemma:L2-estimate-weak-form}) with $v(x,z) = v_n(z) \varphi_n(x)$ with arbitrary $v_n \in H^1_{(0}(0,L)$ 
(cf.\ Lemma~\ref{lemma:ihl}) 
gives due to the orthogonalities satisfied by the functions $\{\varphi_n\}_n$
$$
(p^\prime_n,v^\prime_n)_{L^2(0,L)} + \kappa_n^2 (p_n,v_n)_{L^2(0,L)} + \kappa_n p_n(L) \overline{v}_n(L) = (f_n,v_n)_{L^2(0,L)}. 
$$
From Lemma~\ref{lemma:ihl}, we conclude
\begin{align}
\label{eq:lemma:L2-estimate-10}
\|p_n\|_{1,|\kappa_n|} \leq C
\begin{cases}
L \|f_n\|_{L^2(0,L)} & \mbox{ if $n \in \IP$}, \\
\kappa_n^{-1} \|f_n\|_{L^2(0,L)} & \mbox{ if $n \in \IE$}.
\end{cases}
\end{align}
We arrive at
\begin{align*}
& \|\sqrt{a}\nabla_x p\|^2_{L^2(\Ome)}  = \sum_{n} \JMM{\lambda_n} \|p_n\|^2_{L^2(0,L)} \\
& \qquad \stackrel{(\ref{eq:lemma:L2-estimate-10})}{\lesssim} L^2 \sum_{n\in\IP} \frac{\JMM{\lambda}_n}{|\kappa_n|^2} \|f_n\|^2_{L^2(0,L)}
+ \sum_{n \in \IE} \frac{\JMM{\lambda}_n}{|\kappa_n|^4} \|f_n\|^2_{L^2(0,L)} 
%\\ & 
\stackrel{(\ref{eq:lambda-tildelambda})}{\lesssim} L^2 \|f\|^2_{L^2(\Ome)}, \\
& \|\partial_z p\|^2_{L^2(\Ome)} = \sum_{n} \|p^\prime_n\|^2_{L^2(0,L)}
\stackrel{(\ref{eq:lemma:L2-estimate-10})}{\lesssim}  L^2 \sum_{n\in\IP}  \|f_n\|^2_{L^2(0,L)}
+ \sum_{n \in \IE} |\kappa_n|^{-2} \|f_n\|^2_{L^2(0,L)} \\
& \qquad \lesssim L^2 \|f\|^2_{L^2(\Ome)}, \\
& \|p\|^2_{L^2(\Ome)}  = \sum_{n} \|p_n\|^2_{L^2(0,L)}
\stackrel{(\ref{eq:lemma:L2-estimate-10})}{\lesssim}  \! L^2 \! \sum_{n\in\IP} |\kappa_n|^{-2} \|f_n\|^2_{L^2(0,L)} 
+ \!\! \sum_{n \in \JMM{\IE}} |\kappa_n|^{-4} \|f_n\|^2_{L^2(0,L)} \\ 
& \qquad \lesssim L^2\|f\|^2_{L^2(\Ome)}
+ \sum_{n\in\IE} |\kappa_n|^{-4} \|f_n\|^2_{L^2(0,L)}
\lesssim L^2\|f\|^2_{L^2(\Ome)} \, .
\end{align*}
\end{proof}

\begin{lemma}
\label{lemma:L2div-estimate}
Let $c_0 > 0$ and $L \ge c_0$, assume (\ref{eq:non-degeneracy}).
There is $C > 0$ independent of $L$ (but possibly dependent on $\omega$) such that 
for $\bff \in L^2(\Ome)$ The solution $q \in H^1_{\Gammai}$ of
\begin{align}
\label{eq:lemma:L2div-estimate-weak-form}
(\fraka\nabla p, \nabla v)_{L^2(\Ome)} - \ome^2 (p,v)_{L^2(\Ome)} - \langle  \dtn p,v\rangle = (\bff,\nabla v)_{L^2(\Ome)}
\quad \forall v\in H^1_{\Gammai}
\end{align}
satisfies 
\begin{align*}
\|p\|_{H^1(\Ome)} \leq C L \|\bff\|_{L^2(\Ome)}.
\end{align*}
\end{lemma}

\begin{proof}
We write the vector $\bff$ as $\bff = (\bff_x,f_z)^\top$ with a vector-valued function $\bff_x$ and a scalar function $f_z$. By linearity of the problem,
we may consider the cases $(\bff_x,0)^\top$ and $(0,f_z)^\top$ as right-hand sides separately.
For $\bff_x = 0$, we proceed as in Lemma~\ref{lemma:L2-estimate} by writing
$f_z = \sum_{n} f_n(z) \varphi_n(x)$ and get with Lemma~\ref{lemma:ihl} for the corresponding functions $p_n$
\begin{align*}
\|p_n\|_{1,|\kappa_n|} & \leq C
\begin{cases}
L |\kappa_n| \|f_n\|_{L^2(0,L)}  & \mbox{ if $n \in \IP$}, \\
 \|f_n\|_{L^2(0,L)}  & \mbox{ if $n \in \IE$}
\end{cases} \\
&
\leq C L \|f_n\|_{L^2(0,L)} 
\end{align*}
since $\max_{n \in \IP} |\kappa_n| \leq C$.
We may repeat the calculations performed in Lemma~\ref{lemma:L2-estimate} to establish
\begin{align*}
\|\sqrt{a} \nabla_x p\|^2_{L^2(\Ome)} &=  \sum_{n} \lambda_n \|p_n\|^2_{L^2(0,L)}
\leq C L^2 \sum_{n} \frac{\lambda_n}{|\kappa_n|^2}  \|f_n\|^2_{L^2(0,L)}  \leq C L^2 \|f_z\|^2_{L^2(\Ome)}, \\
\|\partial_z p\|^2_{L^2(\Ome)} &=  \sum_{n} \|p^\prime_n\|^2_{L^2(0,L)}
\leq C L^2 \sum_{n}  \|f_n\|^2_{L^2(0,L)}  \leq C L^2 \|f_z\|^2_{L^2(\Ome)}, \\
\|p\|^2_{L^2(\Ome)} &=  \sum_{n} \|p_n\|^2_{L^2(0,L)}
\leq C L^2 \sum_{n}|\kappa_n|^{-2}   \|f_n\|^2_{L^2(0,L)}  \leq C L^2 \|f_z\|^2_{L^2(\Ome)}.
\end{align*}
For the case of the right-hand side $\bff = (\bff_x,0)^\top$, we define
\[
f_n(z):= (\bff_x(\cdot,z),\nabla \varphi_n)_{L^2(D)}  = (a^{-1} \bff_x(\cdot,z),a\nabla \varphi_n)_{L^2(D)}
\]
and note by the fact that the functions $\{\|\sqrt{a} \nabla \varphi_n\|_{L^2(D)}^{-1} \nabla \varphi_n\}_n$ are an orthonormal (with respect to
$(a \cdot, \cdot)_{L^2(D)}$) basis of its span that
$$
\sum_{n} \lambda^{-1}_n \|f_n\|^2_{L^2(0,L)} =
%\int_{0}^L \sum_{n} \frac{1}{\|\sqrt{a} \nabla \varphi_n\|^2_{L^2(D)}} |(a^{-1} \bff_x(\cdot,z),a\nabla \varphi_n)_{L^2(D)}|^2 \JMM{\diff z}
\int_{0}^L \sum_{n} \frac{ |(a^{-1} \bff_x(\cdot,z),a\nabla \varphi_n)_{L^2(D)}|^2 }
{\|\sqrt{a} \nabla \varphi_n\|^2_{L^2(D)}} 
\diff z
\leq \|a^{-1} \bff_x\|^2_{L^2(\Ome)}.
$$
We expand the solution $p$ as $p = \sum_{n} p_n(z) \varphi_n(x)$. Testing the equation with functions of the form $v_n(z) \varphi_n(x)$ yields again
an equation for the coefficients $q_n$:
\begin{equation*}
\kappa^2_n (p_n,v_n)_{L^2(0,L)} + (p^\prime_n, v^\prime_n)_{L^2(0,L)} + {\kappa}_n p_n(L) \overline{v}_n(L)  = (f_n,v_n)_{L^2(0,L)} \qquad \forall v_n \in H^1_{(0}(0,L).
\end{equation*}
By Lemma~\ref{lemma:ihl} we get
\begin{align*}
\|p_n\|_{1,|{\kappa}_n|} & \leq C
\begin{cases}
L \|f_n\|_{L^2(0,L)} & \mbox{ if $n \in \IP$ }, \\
|\kappa_n|^{-1} \|f_n\|_{L^2(0,L)} & \mbox{ if $n \in \IE$ }.
\end{cases}
\end{align*}
Hence,
\begin{align*}
& \|\sqrt{a} \nabla_x p\|^2_{L^2(\Ome)}  = \sum_{n} \lambda_n \|p_n\|^2_{L^2(0,L)} \\
& \qquad \leq C L^2 \sum_{n \in \IP} \frac{\lambda_n}{\JMM{|\kappa_n|^2}} \|f_n\|^2_{L^2(0,L)} + C \sum_{n \in \IE} \frac{\lambda_n}{|\kappa_n|^4} \|f_n\|^2_{L^2(0,L)}
\leq C L^2 \|\bff_x\|^2_{L^2(\Ome)}, \\ 
& \|\partial_z p\|^2_{L^2(\Ome)}  = \sum_{n} \|p^\prime_n\|^2_{L^2(0,L)} \\
& \qquad \leq C L^2 \sum_{n \in \IP} \|f_n\|^2_{L^2(0,L)} + C \sum_{n \in \IE} |\kappa_n|^{-2} \|f_n\|^2_{L^2(0,L)}
\leq C \|\bff_x\|^2_{L^2(\Ome)}  \\
& \|p\|^2_{L^2(\Ome)}  = \sum_{n} \|p_n\|^2_{L^2(0,L)} \\
& \qquad \leq C L^2 \sum_{n \in \IP} |\kappa_n|^{-2} \|f_n\|^2_{L^2(0,L)}
+ \sum_{n \in \IE} |\kappa_n|^{-4} \|f_n\|^2_{L^2(0,L)} \leq C \|\bff_x\|^2_{L^2(\Ome)}.
\end{align*}
Putting together the above results proves the claim.
\end{proof}

\begin{theorem}
\label{thm:stability-helmholtz}
Assume (\ref{eq:non-degeneracy}).
Let $c_0 > 0$.
There is a constant $C > 0$ (depending on $\fraka$ and $\ome$ but independent of $L \ge c_0$) such that for all $(f,\bfg) \in L^2(\Ome)$ the
problem (\ref{eq:primal-helmholtz}) has a unique solution $(p,\bfu) \in D(A^{\thelm}) $ with
\begin{equation}
\label{eq:thm:stability-helmholtz-10}
\|\bfu \|_{\bfH(\div,\Ome)} + \|p\|_{H^1(\Ome)} \leq C L \left[ \|f \|_{L^2(\Ome)} + \|\bfg\|_{L^2(\Ome)}\right].
\end{equation}
In particular, for all $(\bfu,p) \in D(A^{\thelm})$
\begin{equation}
\label{eq:thm:stability-helmholtz-20}
\|A^{\thelm} (p,\bfu)^\top\|_{L^2(\Ome)} \ge C L^{-1} \|(\bfu,p)\|_{L^2(\Ome)}.
\end{equation}
\end{theorem}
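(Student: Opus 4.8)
\noindent\emph{Proof plan.} The plan is to reduce the first-order system (\ref{eq:primal-helmholtz}) to the scalar second-order problem for $p$ analyzed in Lemmas~\ref{lemma:L2-estimate} and \ref{lemma:L2div-estimate}, to transport their $L$-explicit bounds back to $(p,\bfu)$, and to read off (\ref{eq:thm:stability-helmholtz-20}) from (\ref{eq:thm:stability-helmholtz-10}). The reduction step is the equivalence of (\ref{eq:primal-helmholtz}) with the following problem: find $p\in H^1_{\Gammai}$ such that $(\fraka\nabla p,\nabla v)_{L^2(\Ome)}-\ome^2(p,v)_{L^2(\Ome)}-\langle\dtn p,v\rangle=(f,\nabla v)_{L^2(\Ome)}+i\ome(\bfg,v)_{L^2(\Ome)}$ for all $v\in H^1_{\Gammai}$. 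In the forward direction, from a solution $(p,\bfu)\in D(A^{\thelm})$ I would solve (\ref{eq:primal-helmholtz-a}) for $\bfu=(i\ome)^{-1}(f-\fraka\nabla p)$, substitute into the weak form of (\ref{eq:primal-helmholtz-b}) tested against $v\in H^1_{\Gammai}$, integrate the divergence by parts, and use (\ref{eq:primal-helmholtz-c}), (\ref{eq:primal-helmholtz-d}) and the $\dtn$-condition on $\Gammao$; the lateral contribution drops because $\bfn\cdot\fraka\nabla p=f\cdot\bfn$ on $\Gammal$ (a consequence of $\bfu\cdot\bfn=0$). Conversely, given $p\in H^1_{\Gammai}$ solving the displayed identity, I would set $\bfu:=(i\ome)^{-1}(f-\fraka\nabla p)$; testing with $v\in C_0^\infty(\Ome)$ gives $\div(f-\fraka\nabla p)\in L^2(\Ome)$, so $\bfu\in\bfH(\div,\Ome)$ with $\div\bfu=\bfg-i\ome p$ (this is (\ref{eq:primal-helmholtz-b}), while (\ref{eq:primal-helmholtz-a}) holds by construction and $p|_{\Gammai}=0$ is built in), and comparing the identity with the full integration-by-parts formula for general $v\in H^1_{\Gammai}$ recovers $\bfu\cdot\bfn=0$ in $H^{-1/2}(\Gammal)$ and the $\dtn$-condition in $\widetilde{H}^{-1/2}(\Gammao)$ as natural boundary conditions, hence $(p,\bfu)\in D(A^{\thelm})$.

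With the equivalence in place the a priori estimate is bookkeeping. By linearity of the second-order problem I would write $p=p_f+p_{\bfg}$ according to the two right-hand side functionals $v\mapsto(f,\nabla v)_{L^2(\Ome)}$ and $v\mapsto i\ome(\bfg,v)_{L^2(\Ome)}$; Lemma~\ref{lemma:L2div-estimate} applied with datum $f$ gives $\|p_f\|_{H^1(\Ome)}\le CL\|f\|_{L^2(\Ome)}$, and Lemma~\ref{lemma:L2-estimate} applied with datum $i\ome\bfg$ gives $\|p_{\bfg}\|_{H^1(\Ome)}\le CL\|\bfg\|_{L^2(\Ome)}$, with $C$ depending only on $\fraka$ and $\ome$. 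Hence $\|p\|_{H^1(\Ome)}\le CL(\|f\|_{L^2(\Ome)}+\|\bfg\|_{L^2(\Ome)})$. Then $\|\bfu\|_{L^2(\Ome)}\le\ome^{-1}(\|f\|_{L^2(\Ome)}+\|\fraka\nabla p\|_{L^2(\Ome)})\lesssim\|f\|_{L^2(\Ome)}+\|p\|_{H^1(\Ome)}$, and $\div\bfu=\bfg-i\ome p$ gives $\|\div\bfu\|_{L^2(\Ome)}\le\|\bfg\|_{L^2(\Ome)}+\ome\|p\|_{L^2(\Ome)}$; adding the three bounds (for $L$ bounded below, the regime of interest) yields (\ref{eq:thm:stability-helmholtz-10}).

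Uniqueness is immediate: applying the estimate with $f=\bfg=0$ forces $p=0$ and then $\bfu=0$. For existence I would use the Fredholm alternative for the second-order form on $H^1_{\Gammai}$: splitting $\dtn=\dtn_{\IP}+\dtn_{\IE}$ into the finite-rank propagating part and the evanescent part, the form $(\fraka\nabla\cdot,\nabla\cdot)-\langle\dtn_{\IE}\cdot,\cdot\rangle+(\cdot,\cdot)_{L^2}$ is bounded and coercive (the evanescent contribution satisfies $-\langle\dtn_{\IE}p,p\rangle=\sum_{n\in\IE}\kappa_n|p_n|^2\ge0$), so the full form is a compact perturbation of an isomorphism; injectivity then upgrades to bijectivity, giving existence — alternatively, existence follows directly from the modewise construction already underlying the proofs of Lemmas~\ref{lemma:L2-estimate} and \ref{lemma:L2div-estimate}. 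Finally, (\ref{eq:thm:stability-helmholtz-20}) follows by applying (\ref{eq:thm:stability-helmholtz-10}) to the data $(f,\bfg):=A^{\thelm}(p,\bfu)^\top$ for arbitrary $(p,\bfu)\in D(A^{\thelm})$ — which, by uniqueness, is the solution associated with those data — and bounding $\|(\bfu,p)\|_{L^2(\Ome)}\le\|\bfu\|_{\bfH(\div,\Ome)}+\|p\|_{H^1(\Ome)}$.

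The step I expect to be the main obstacle is the equivalence in the first paragraph: one must verify carefully that the weak solution $p$ produces a $\bfu$ with $\div\bfu\in L^2(\Ome)$, that the natural boundary conditions on $\Gammal$ and $\Gammao$ are recovered in the correct trace spaces, and that the assembled right-hand side is exactly the superposition to which Lemmas~\ref{lemma:L2-estimate} and \ref{lemma:L2div-estimate} apply. Everything after that is those two lemmas plus elementary norm inequalities.
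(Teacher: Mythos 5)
Your proposal is correct and follows essentially the same route as the paper: reduce (\ref{eq:primal-helmholtz}) to the second-order weak problem (\ref{eq:thm:stability-helmholtz-30}) for $p\in H^1_{\Gammai}$, invoke Lemmas~\ref{lemma:L2-estimate} and \ref{lemma:L2div-estimate} for the $L$-linear bound, and reconstruct $\bfu=(i\ome)^{-1}(f-\fraka\nabla p)$; the only differences are cosmetic (you derive (\ref{eq:thm:stability-helmholtz-20}) from (\ref{eq:thm:stability-helmholtz-10}) via uniqueness, whereas the paper proves the boundedness-below first and then reverses the argument, and your Fredholm alternative for existence is an admissible substitute for the paper's direct appeal to the lemmas). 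The one step you flag but do not carry out --- recovering $\bfu\cdot\bfn-\dtn p=0$ in $\widetilde{H}^{-1/2}(\Gammao)$ rather than merely in $H^{-1/2}(\Gammao)$ --- is handled in the paper by a support argument together with \cite[Thm.~{3.29}]{mclean00} and density of $C^\infty_0$ in $H^{1/2}$, exactly as you anticipate.
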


\begin{proof}
\emph{Proof of (\ref{eq:thm:stability-helmholtz-20}):}
%First, we note that $A^{\thelm}:D(A^{\thelm}) \rightarrow L^2(\Ome)$ is injective. Indeed, $A^{\thelm}(p,\bfu)^\top = 0$ implies
%$\bfu = -i\ome \nabla p$ and therefore $p \in H^1(\Ome)$ satisfies a homogeneous second-order equation. Together with the
%boundary condition, one checks that $p = 0$ so that also $\bfu = 0$.
%
Abbreviate for the two components of $A^{\thelm} (p,\bfu)^\top$
$$
\bfg := \bi \ome \bfu + \fraka \nabla p \in L^2(\Ome),
\qquad f := \div \bfu +\bi \ome p \in L^2(\Ome).
$$
Hence, $(p,\bfu)$ satisfy for smooth $(\tilde p,\tilde \bfu)$
\begin{align*}
(\bi \ome \bfu, \tilde \bfu)_{L^2(\Ome)} + (\fraka \nabla p, \tilde \bfu)_{L^2(\Ome)} & = (\bfg, \tilde \bfu)_{L^2(\Ome)} \, , \\
( \div \bfu,\tilde p)_{L^2(\Ome)} + (\bi \ome p, \tilde p)_{L^2(\Ome)} & = (f,\tilde p)_{L^2(\Ome) } \, .
\end{align*}
Considering $\tilde p$ with $\tilde p|_{\Gammai} = 0$ and using the boundary conditions satisfied by $(p,\bfu)$ (i.e., $(p,\bfu) \in D(A^{\thelm})$) yields
after an integration by parts
\begin{align*}
(\bi \ome \bfu, \tilde \bfu)_{L^2(\Ome)} + (\fraka \nabla p, \tilde \bfu)_{L^2(\Ome)} & = (\bfg, \tilde \bfu)_{L^2(\Ome)} \, , \\
-(\bfu, \nabla \tilde p)_{L^2(\Ome)} + (\bi\ome p, \tilde p)_{L^2(\Ome)} - \frac{1}{\bi\ome}\langle\dtn p, \tilde p\rangle_{\Gammao}  & = (f,\tilde p)_{L^2(\Ome) } \, .
\end{align*}
Selecting $\tilde \bfu = -\frac{1}{\bi\ome} \nabla \tilde p$ yields
\begin{align*}
(\bfu, \nabla \tilde p)_{L^2(\Ome)} + \frac{1}{\bi\ome} (\fraka \nabla p, \nabla \tilde p)_{L^2(\Ome)} & = \frac{1}{\bi\ome} (\bfg, \nabla \tilde p)_{L^2(\Ome)} \, , \\
-(\bfu, \nabla \tilde p)_{L^2(\Ome)} + (\bi\ome p, \tilde p)_{L^2(\Ome)} - \frac{1}{\bi\ome}\langle\dtn p, \tilde p\rangle_{\Gammao}  & = (f,\tilde p)_{L^2(\Ome) } \, ,
\end{align*}
so that, by adding these two equations and multiplying by $\bi\ome$, we arrive at
\begin{align}
\label{eq:thm:stability-helmholtz-30}
\begin{aligned}
(\fraka \nabla p, \nabla \tilde p)_{L^2(\Ome)} & -\ome^2  (p,\tilde p)_{L^2(\Ome)} - \langle \dtn p,\tilde p\rangle_{\Gammao} \\
 & =  (\bfg,\nabla \tilde p)_{L^2(\Ome)} + \bi \ome (f,\tilde p)_{L^2(\Ome)}
\qquad \qquad \forall \tilde p \in H^1_{\Gammai} \, .
\end{aligned}
\end{align}
From Lemmas~\ref{lemma:L2-estimate}, \ref{lemma:L2div-estimate} we infer
\begin{align*}
\|p\|_{H^1(\Ome)} &\leq C L \left[ \|\bfg\|_{L^2(\Ome)} + \|f\|_{L^2(\Ome)}\right] ,
\end{align*}
which in turn yields
$$
\|(p,\bfu)\|^2_{L^2(\Ome)} \leq \|p\|^2_{L^2(\Ome)} + \|\bfu\|^2_{L^2(\Ome)}
\leq \|p\|^2_{L^2(\Ome)} + 2 \ome^{-2} \|\nabla p\|^2_{L^2(\Ome)} + 2 \ome^{-2} \|\bfg \|^2_{L^2(\Ome)} \, .
$$
In total, we arrive at $\|(p,\bfu)\|_{L^2(\Ome)} \leq C L \left[\|\bfg \|_{L^2(\Ome)} + \|f\|_{L^2(\Ome)} \right]$, i.e.,
$\|(p,\bfu)\|_{L^2(\Ome)} \leq C L \|A^{\thelm}(p,\bfu)^\top\|_{L^2(\Ome)}$, which is (\ref{eq:thm:stability-helmholtz-20}).

\emph{Proof of (\ref{eq:thm:stability-helmholtz-10}):}
To see solvability for any $(f,\bfg) \in L^2(\Omega)$, we reverse the above arguments. Lemmas~\ref{lemma:L2-estimate}, \ref{lemma:L2div-estimate}
imply solvability of (\ref{eq:thm:stability-helmholtz-30}) for $p \in H^1_{\Gammai}$. Next, setting
$\bfu:= (\bi \ome)^{-1}( \bfg - \fraka \nabla p)$, one infers from (\ref{eq:thm:stability-helmholtz-30}) that
$\bfu \in \bfH(\div,\Ome)$ with $\div\bfu = f - \bi \ome p \in L^2(\Ome)$. This implies the estimate (\ref{eq:thm:stability-helmholtz-10}).
To see that $(p,\bfu) \in D(A^{\thelm})$, we infer, using that $\div \bfu = f - \bi \ome p$, that
$\bfu \cdot \bfn = 0$ in $H^{-1/2}(\Gammal)$ and $\bi \ome \bfu \cdot \bfn  + \dtn p = 0$ in $H^{-1/2}(\Gammao)$.
To see that in fact $\bfu \cdot \bfn = 0$ in $\widetilde{H}^{-1/2}(\Gammal)$, we note that
$\bfu \cdot \bfn \in H^{-1/2}(\partial\Ome\setminus \overline{\Gammai})$
with $\operatorname{supp} \bfu \cdot \bfn \subset \overline{\Gammao}$. Hence, by \cite[Thm.~{3.29}]{mclean00},
we have $\bfu \cdot \bfn \in \widetilde{H}^{-1/2}(\Gammao)$. Since by construction
$\dtn p \in \widetilde{H}^{-1/2} (\Gammao)$, the difference $z := \bi \ome \bfu \cdot \bfn + \dtn p \in \widetilde{H}^{-1/2}(\Gammao)$.  By the density of $C^\infty_0(\JMM{\Gammao})$ in $H^{1/2}(\JMM{\Gammao})$ (cf.\ \cite[Thm.~{11.1}]{lions-magenes72}) and $z = 0$ in $H^{-1/2}(\JMM{\Gammao})$, 
we conclude that actually $z = 0$ in $\widetilde{H}^{-1/2}(\JMM{\Gammao})$. 
%\hrule 
%Since $z|_{\Gammao} = 0$, we conclude that the zero extension $\widetilde{z}$ to $H^{-1/2}(\partial\Ome\setminus \overline{\Gammai})$
%is a distribution on $\partial\Ome\setminus \overline{\Gammai}$ with
%$\operatorname{supp} z \subset \partial\Gammao$. However, since $H^{1/2}(\partial\Ome\setminus\overline{\Gammai})$-functions
%do not admit traces on lower-dimensional manifolds, the support of the distribution $z \in H^{-1/2}(\partial\Ome\setminus\overline{\Gammai})$
%%must be empty. Hence, $z = 0$ in $H^{-1/2}(\partial\Ome\setminus\overline{\Gammai})$, i.e., $z = 0$ in $\widetilde{H}^{-1/2}(\Gammao)$.
This shows that $(p,\bfu) \in D(A^{\thelm})$.
\end{proof}
%---------------------------------------------

% Maxwell stability analysis
%------------------------------------------------
\section{Stability for Maxwell's equations}
\label{sec:maxwell}
%------------------------------------------------
The stability analysis of the Maxwell system (\ref{eq:primal-maxwell}) proceeds similarly
to the case of the Helmholtz equation in that the use of a suitable system of functions in the transverse
direction reduces the stability analysis to one for a decoupled ordinary differential equation (ODE) system.

%------------------------------------------------
\subsection{Motivation: the waveguide eigenproblems}
\label{sec:motivation-maxwell}
%------------------------------------------------
We motivate our choice of transversal expansion system by deriving appropriate transversal eigenvalue problems
for the electric field $\bfE$ and \JMM{the} magnetic field $\bfH$. We write $\bfE = \bfE(x,z)$, $\bfH = \bfH(x,z)$
with $x \in D$, $z \in (0,L)$ as
\begin{align*}
\bfE &= \left(\begin{array}{c} \bfE_t \\ E_3 \end{array}\right), 
& 
\bfH & = \left(\begin{array}{c} \bfH_t \\ H_3 \end{array}\right)
\end{align*}
with the transversal components $\bfE_t$, $\bfH_t:D \times (0,L) \rightarrow \doubleIR^2$ and the longitudinal
components $E_3$, $H_3: D \times (0,L) \rightarrow \doubleIR$. The 2D curl operators $\bfnab \times $
and $\curl$ are defined by
$\bfnab \times \psi = (\partial_2 \psi, -\partial_1 \psi)^\top$ for scalar functions $\psi$
and
$\curl {\boldsymbol \psi} = \partial_1 {\boldsymbol \psi}_2 -\partial_2 {\boldsymbol \psi}_1$ for vector-valued functions ${\boldsymbol \psi}$.
Correspondingly, we define the space $\bfH(\curl,D)  = \{{\boldsymbol \psi} \in (L^2(D))^2\,|\, \curl {\boldsymbol \psi} \in L^2(D)\}$. 
We set $\bfH_0(\curl,D) = \{{\boldsymbol \psi} \in \bfH(\curl,D)\,|\, \gamma^{2D}_t {\boldsymbol \psi} = 0\}$ with 
the tangential trace $\gamma^{2D}_t {\boldsymbol \psi} = {\boldsymbol \psi} - ({\boldsymbol \psi} \cdot \bfn) \bfn$ and the outer normal vector $\bfn$ 
of $D$. We use the notation 
${\bfH}_0(\div,D) = \{ {\boldsymbol \psi} \in {\bfH}(\div,D)\,|\, {\boldsymbol \psi} \cdot \bfn = 0 \ \mbox{ on $\partial D$}\}$.
The vector $\ez$ is the unit vector in $z$-direction, and the notation $\ez \times {\boldsymbol \psi}$ for a 2D-vector field is
understood by viewing ${\boldsymbol \psi}$ as a 2D-vector field with vanishing third component.
We will use the following 2D identities (where the the differential operators $\bfnab$, $\bfnab \times$, $\div$, and $\curl$ act on the 2D variable $x \in D$ only):
\begin{subequations}
\label{eq:identities}
\begin{align}
\ez \times (\ez \times \bfE_t) & = - \bfE_t, && \\
\ez \times (\bfnab \times E_3) & = \bfnab E_3, 
&
\ez \times \bfnab E_3& = - \bfnab \times E_3, \\
\curl (\ez \times \bfE_t) & = \div \bfE_t, 
&
\div (\ez \times \bfE_t) & = - \curl  \bfE_t\, .
\end{align}
\end{subequations}
The original system of equations (\ref{eq:primal-maxwell}) translates into:
\be
\label{eq:1st_order_transient_system}
\left\{
\begin{array}{rlll}
\bfnab \times E_3 + \ez \times \frac{\ptl}{\ptl z} \bfE_t - \bi \omega \bfH_t & = \bff_t, \\[5pt]
\curl \bfE_t - \bi \omega H_3 & = f_ 3,\\[5pt]
\bfnab \times H_3 + \ez \times \frac{\ptl}{\ptl z} \bfH_t + \bi \omega  \bfE_t & = \bfg_t, \\[5pt]
\curl \bfH_t + \bi \omega  E_3 & = g_3 \, .
\end{array}
\right.
\ee
Multiplying (\ref{eq:1st_order_transient_system})$_1$ and (\ref{eq:1st_order_transient_system})$_3$ by $\bi \omega \, \ez \times$, we obtain:
\be
\left\{
\begin{array}{rlll}
\bfnab \bi \omega E_3 - \frac{\ptl}{\ptl z} \bi \omega \bfE_t + \omega^2 \, \ez \times \bfH_t & = \bi \omega \, \ez \times \bff_t, \\[5pt]
\curl \bfE_t - \bi \omega H_3 & = f_ 3,\\[5pt]
\bfnab \bi \omega H_3 - \frac{\ptl}{\ptl z} \bi \omega \bfH_t - \omega^2 \, \ez \times  \bfE_t & = \bi \omega \, \ez \times \bfg_t, \\[5pt]
\curl \bfH_t + \bi \omega  E_3 & = g_3 \, .
\end{array}
\right.
\label{eq:1st_order_transient_system_mod}
\ee
The eigensystem corresponding to the first order system operator and $e^{\bi \beta z}$ ansatz in the variable $z$ is 
to seek solutions $(\bfE,\bfH)$ of the homogeneous equation in the form 
$\bfE(x,z) = (\bfEt_t(x),\Edreit(x)) e^{\bi \beta z}$ and  
$\bfH(x,z) = (\bfHt_t(x),\Hdreit(x)) e^{\bi \beta z}$, which leads to 
\be
\left\{
\begin{array}{rllll}
\bfEt_t \in \bfH_0(\curl, D),\, \Edreit \in H^1_0(D),\\[8pt]
\bfHt_t \in \bfH(\curl,D),\, \Hdreit \in H^1(D), \\[8pt]
\bi \omega \bfnab \Edreit + \omega^2 \ez \times \bfHt_t & = - \omega \beta \bfEt_t, \\[8pt]
\curl \bfEt_t - \bi \omega \Hdreit & = 0, \\[8pt]
\bi \omega \bfnab \Hdreit - \omega^2 \ez \times  \bfEt_t & = - \omega \beta \bfHt_t, \\[8pt]
\curl \bfHt_t + \bi \omega  \Edreit & = 0 \, .
\end{array}
\right.
\label{eq:1st_order_eigensystem}
\ee
Eliminating $\Edreit$ and $\Hdreit$ from the system~(\ref{eq:1st_order_eigensystem}), we obtain a simplified but second order system for $\bfEt_t$, $\bfHt_t$ only:
\be
\left\{
\begin{array}{rlll}
\bfEt_t \in \bfH_0(\curl, D),\, \curl \bfEt_t \in H^1(D), \\[8pt]
\bfHt_t \in \bfH(\curl,D),\, \curl \bfHt_t \in H^1_0(D),\\[8pt]
- \bfnab (\curl \bfHt_t) + \omega^2 \ez \times \bfHt_t & = - \omega \beta \bfEt_t, \\[8pt]
\bfnab (\curl \bfEt_t) - \omega^2 \ez \times  \bfEt_t & = - \omega \beta \bfHt_t \, .
\end{array}
\right.
\label{eq:2nd_order_eigensystem}
\ee
%\todo{\bf wo gehen die BC hin?}
%-----------------
\subsection{Reduction to single variable eigensystems}
%-----------------
Assume $\beta \ne 0$.
Solving~(\ref{eq:2nd_order_eigensystem})$_2$ for $\bfHt_t$,
\be
\begin{array}{rll}
\bfHt_t & \ds = - \frac{1}{\omega \beta} [ \bfnab \curl \bfEt_t - \omega^2 \ez \times  \bfEt_t ], \\[8pt]
\curl \bfHt_t & \ds = \frac{\omega}{\beta} \curl (\ez \times  \bfEt_t) = \frac{\omega}{\beta} \div \bfEt_t, 
\end{array} 
\label{eq:H_t}
\ee
and substituting it into~(\ref{eq:2nd_order_eigensystem})$_1$, we obtain an eigenvalue problem for $\bfEt_t$ alone:
\be
\left\{
\begin{array}{ll}
\bfEt_t \in \bfH_0(\curl, D),\, \curl \bfEt_t \in H^1(D),\,  \div \bfEt_t \in H^1_0(D),  \\[8pt]
\bfnab \times \curl \bfEt_t - \omega^2  \bfEt_t - \bfnab \div \bfEt_t  = - \beta^2 \bfEt_t \, .
\end{array}
\right.
\label{eq:Eeigenproblem}
\ee
Similarly, solving~(\ref{eq:2nd_order_eigensystem})$_1$ for $\bfEt_t$,
\be
\begin{array}{rlll}
\bfEt_t & \ds = - \frac{1}{\omega \beta} [ - \bfnab \curl \bfHt_t  + \omega^2 \ez \times \bfHt_t ], \\[8pt]
\curl \bfEt_t & \ds = - \frac{\omega}{\beta} \curl (\ez \times \bfHt_t) = - \frac{\omega}{\beta} \div \bfHt_t, 
\end{array}
\label{eq:E_t}
\ee
and substituting it into~(\ref{eq:2nd_order_eigensystem})$_2$, we obtain an eigenvalue problem for $\bfHt_t$ alone:
\be
\left\{
\begin{array}{lll}
\bfHt_t \in \bfH(\curl, D) \cap \bfH_0(\div,D),\,  \curl \bfHt_t \in H^1_0(D),\, \div \bfHt_t \in H^1(D),  \\[8pt]
 \bfnab \times \curl \bfHt_t - \omega^2  \bfHt_t - \bfnab (\div \bfHt_t)  = - \beta^2 \bfHt_t \, .
\end{array}
\right.
\label{eq:Heigenproblem}
\ee
Note that the boundary condition $\gamma^{2D}_t \bfEt_t = 0$  on $\partial D$ implies by (\ref{eq:1st_order_eigensystem})$_1$
the boundary condition $\bfn \cdot \bfHt_t = 0$  on $\partial D$.

%-------------------------------------------------------
\subsection{Structure of the single variable eigenproblems}
%-------------------------------------------------------

\begin{lemma}[Helmholtz decompositions]

Let $D \subset \doubleIR^2$ be a simply connected domain.
For every $\bfE \in L^2(D)^2$ there exist a unique $\phi \in H^1_0(D)$ and a unique $\psi \in H^1(D)$ with $\int_D \psi = 0$
such that
\be
\bfE = \bfnab \phi + \bfnab \times \psi \, .
\label{eq:Helmholtz_decomposition}
\ee
Similarly, for every $\bfH \in L^2(D)^2$ there exist a unique $\phi \in H^1_0(D)$ and a unique $\psi \in H^1(D)$ with $\int_D \psi = 0$
such that
\be
\bfH = \bfnab \times \phi + \bfnab \psi \, .
\label{eq:Helmholtz_decomposition2}
\ee

\end{lemma}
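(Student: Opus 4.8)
The plan is to prove the two decompositions by the standard variational (weak-form) construction, handling \eqref{eq:Helmholtz_decomposition} first and then deriving \eqref{eq:Helmholtz_decomposition2} from it by rotation. Given $\bfE \in L^2(D)^2$, I would first define $\phi \in H^1_0(D)$ as the unique solution of the Poisson problem
\[
(\bfnab \phi, \bfnab \chi)_{L^2(D)} = (\bfE, \bfnab \chi)_{L^2(D)} \qquad \forall \chi \in H^1_0(D),
\]
which exists and is unique by Lax--Milgram (coercivity of $(\bfnab \cdot,\bfnab \cdot)$ on $H^1_0(D)$ via Poincar\'e). Then I would set $\bfF := \bfE - \bfnab \phi \in L^2(D)^2$; by construction $\bfF$ is $L^2$-orthogonal to all gradients of $H^1_0$-functions, i.e.\ $\div \bfF = 0$ in the distributional sense with the natural (no) boundary condition. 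The task reduces to showing that a divergence-free $L^2$ vector field $\bfF$ on a simply connected planar domain is a rotated gradient: $\bfF = \bfnab \times \psi = (\partial_2 \psi, -\partial_1 \psi)$ for a unique $\psi \in H^1(D)$ with $\int_D \psi = 0$.

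For that step I would invoke the classical result that on a simply connected domain the kernel of $\div$ (as a distribution) inside $L^2(D)^2$ equals the range of the rotated gradient $\psi \mapsto \bfnab \times \psi$ acting on $H^1(D)$; equivalently, $\rot(\bfF) = 0$ would give a potential, but here it is $\div \bfF = 0$ that yields a stream function $\psi$, using that $\rot$ and $\div$ are exchanged by the $90^\circ$ rotation $\bfF \mapsto (-F_2, F_1)$. Concretely: the rotated field $\bfG := (-F_2, F_1)$ satisfies $\rot \bfG = \div \bfF = 0$, so on the simply connected $D$ there is $\psi \in H^1(D)$ with $\bfnab \psi = \bfG$ (this is the standard exactness/Poincar\'e-lemma statement at the $L^2$ level on simply connected Lipschitz domains; see e.g.\ Girault--Raviart or \cite{mclean00}), unique up to an additive constant, which is fixed by requiring $\int_D \psi = 0$. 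Unwinding the rotation gives $\bfF = \bfnab \times \psi$. Uniqueness of the pair $(\phi,\psi)$ follows because if $\bfnab \phi + \bfnab \times \psi = 0$, testing against $\bfnab \phi$ and using that $\bfnab \times \psi$ is divergence-free with the matching boundary behavior forces $\bfnab \phi = 0$ hence $\phi = 0$ (as $\phi \in H^1_0$), and then $\bfnab \times \psi = 0$ with $\int_D \psi = 0$ forces $\psi = 0$.

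Finally, \eqref{eq:Helmholtz_decomposition2} is obtained by applying \eqref{eq:Helmholtz_decomposition} to the rotated field $\ez \times \bfH$ (equivalently $(-H_2, H_1)$) and rotating back, using the identities in \eqref{eq:identities} that swap $\bfnab$ with $\bfnab \times$ under $\ez \times$; the roles of $\phi \in H^1_0(D)$ and $\psi \in H^1(D)$ with zero mean are preserved. The main obstacle is the second step — producing the stream function $\psi$ from the divergence-free field — since this is exactly where simple connectedness is used and where one must be careful that the conclusion holds at the $L^2$ regularity level on a Lipschitz domain rather than for smooth fields; I would quote this from a standard reference rather than reprove the planar Poincar\'e lemma. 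Everything else is routine Lax--Milgram and orthogonality bookkeeping.
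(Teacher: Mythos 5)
Your proposal is correct and follows the standard route: the paper's own ``proof'' is a one-line citation to Girault--Raviart (Thm.~3.3/Rem.~3.3), and what you write out is precisely the argument behind that citation --- Lax--Milgram for the $H^1_0$ potential, the planar stream-function/Poincar\'e lemma on simply connected domains for the divergence-free remainder, and a rotation to pass between the two decompositions. The only ingredient you do not prove from scratch (existence of the stream function at $L^2$ regularity) is exactly the point you correctly flag and propose to quote from the same reference, so there is no gap.
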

\begin{proof}
The decomposition of $\bfH$ can be found in \cite[Thm.~{3.3}/Rem.~{3.3}]{girault-raviart86}. The assertion about $\bfE$ follows by similar arguments. 
\end{proof}

Consider now the eigenvalue problem~(\ref{eq:Eeigenproblem}) and the Helmholtz decomposition of $\bfE$. The boundary condition $\gamma^{2D}_t \bfE_t = 0$
implies that $\frac{\ptl \psi}{\ptl n} = 0$ on $\ptl D$.
Substituting~(\ref{eq:Helmholtz_decomposition}) into (\ref{eq:Eeigenproblem}), we obtain
\be
\bfnab \times (\underbrace{- \Delta \psi + (\beta^2 - \omega^2) \psi}_{=: \Psi}) + \bfnab (\underbrace{- \Delta \phi + (\beta^2 - \omega^2) \phi}_{=:\Phi}) = 0 \, .
\label{eq:derived_Helmholtz}
\ee
The equation above represents the Helmholtz decomposition of the zero function. Indeed, $\Delta \phi =  \div \bfE $ is zero on $\partial D$ and, therefore, $\Phi$ is zero on $\partial D$ as well. Multiplying~(\ref{eq:derived_Helmholtz}) by $\bfnab \times \Psi$ and
integrating the second term by parts, we learn that
$$
\Psi = - \Delta \psi + (\beta^2 - \omega^2) \psi = \mathrm{const} \, .
$$
Consequently,
$$
(\Psi, 1) = (\bfnab \psi , \underbrace{\bfnab 1}_{=0}) - \langle \underbrace{\frac{\ptl \psi}{\ptl n}}_{=0}, 1 \rangle + (\beta^2 - \omega^2) \underbrace{( \psi , 1)}_{=0}  = 0 \, .
$$
Uniqueness of $\phi$ and $\psi$ in the Helmholtz decomposition implies that $\Phi = \Psi = 0$.
Let $(\lambda_j,\phi_j)$, $j \in \doubleIN$, and $(\mu_i,\psi_i)$, $i \in \doubleIN_0$, be the Dirichlet and Neumann eigenpairs of the Laplacian in 
the domain $D$ with $\mu_0 = 0$. Vanishing of $\Phi$ and $\Psi$ implies that 

\begin{align*}
(\phi,\omega^2 - \beta^2) 
\mbox{ is a Dirichlet eigenpair} 
\quad 
\mbox{ and } 
\quad 
(\psi,\omega^2 - \beta^2) 
\mbox{ is a Neumann eigenpair.} 
\end{align*}

%there exist $i,j$ such that
%\begin{align*}
%\phi & = \phi_j,&  \omega^2 - \beta^2 & = \lambda_j && \quad \text{and} & \quad \psi &= \psi_i, &  \omega^2 - \beta^2 & = \mu_i \, .
%\end{align*}
If the Dirichlet and Neumann eigenvalues are distinct, the eigenvector $\bfE$ must reduce to either a gradient or a curl.
This is the case, e.g., for a circular domain $D$. In the case of a common Dirichlet and Neumann eigenvalue, $\lambda_i = \mu_j$,
we obtain a multiple eigenvalue $\beta^2 = \omega^2 - \lambda_i = \omega^2 - \mu_j$, with the eigenspace consisting of linear combinations of the vectors
$$
\bfE = A \bfnab \times \psi_i + B \bfnab \phi_j \, , \quad A,B \in \doubleIC \, .
$$
\begin{lemma}
\label{lemma:eigenvalues}
Let $(\lambda_j,\phi_j)$, $j \in \doubleIN$, and $(\mu_i,\psi_i)$, $i \in \doubleIN_0$, 
denote the Dirichlet and Neumann eigenpairs of the Laplacian in the domain $D$ with $\mu_0 = 0$.
The eigenvalues $\beta^2_n$ for~(\ref{eq:Eeigenproblem}) are classified into the following three families:
\begin{enumerate}[(a)]
\item $\beta^2_{n} = \omega^2 - \mu_i$ with $\mu_i$ distinct from all $\lambda_j$. The corresponding eigenvectors are curls:
$$
\bfE = \bfnab \times \psi_i \, ,
$$
with (geometric) multiplicity of $\beta^2_{n}$ equal to the (geometric) multiplicity of $\mu_i$.
\item $\beta^2_{n} = \omega^2 - \lambda_j$ with $\lambda_j$ distinct from all $\mu_i$. The corresponding eigenvectors are gradients:
$$
\bfE = \bfnab \phi_j\, ,
$$
with (geometric) multiplicity of $\beta^2_{n}$ equal to the (geometric) multiplicity of $\lambda_j$.
\item $\beta^2_{n} =  \omega^2 - \mu_i = \omega^2 - \lambda_j$ for $\mu_i = \lambda_j$. 
The corresponding eigenvectors are linear combinations of curls and gradients:
$$
\bfE = A \bfnab \times \psi_i + B \bfnab \phi_j\, , \quad A,B \in \doubleIC\, ,
$$
with (geometric) multiplicity of $\beta^2_{n}$ equal to the sum of (geometric) multiplicities of $\mu_i$ and $\lambda_j$.
\end{enumerate}
\end{lemma}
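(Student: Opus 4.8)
The plan is to make rigorous the formal computation carried out in the paragraphs preceding the statement. Fix an eigenpair $(\beta^2,\bfE)$ of \eqref{eq:Eeigenproblem} with $\bfE\neq 0$ and, using the Helmholtz decomposition \eqref{eq:Helmholtz_decomposition} on the (simply connected) domain $D$, write $\bfE=\bfnab\phi+\bfnab\times\psi$ with $\phi\in H^1_0(D)$ and $\psi\in H^1(D)$, $\int_D\psi=0$. First I would read off the boundary behaviour and regularity inherited by the potentials: from $\curl\bfE\in H^1(D)$ and $\div\bfE\in H^1_0(D)$ one gets $\Delta\psi=-\curl\bfE\in H^1(D)$ and $\Delta\phi=\div\bfE\in H^1_0(D)$ (in particular $\Delta\phi$ has vanishing trace), while $\gamma_t\bfE=0$ combined with $\phi\in H^1_0(D)$ forces $\partial_n\psi=0$ on $\partial D$ (the tangential trace of $\bfnab\phi$ vanishes automatically and that of $\bfnab\times\psi$ equals $\pm\partial_n\psi$). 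Substituting the decomposition into \eqref{eq:Eeigenproblem} and using $\curl(\bfnab\times\psi)=-\Delta\psi$, $\div\bfnab\phi=\Delta\phi$, $\curl\bfnab\phi=0$, $\div(\bfnab\times\psi)=0$, one arrives at exactly \eqref{eq:derived_Helmholtz} with $\Phi:=-\Delta\phi+(\beta^2-\omega^2)\phi\in H^1_0(D)$ and $\Psi:=-\Delta\psi+(\beta^2-\omega^2)\psi\in H^1(D)$.

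The heart of the argument is to show $\Phi\equiv\Psi\equiv 0$. I would test \eqref{eq:derived_Helmholtz} in $L^2(D)^2$ against $\bfnab\times\Psi\in L^2(D)^2$: since $\div(\bfnab\times\Psi)=0$ and $\Phi$ has vanishing trace, integration by parts annihilates the cross term $(\bfnab\Phi,\bfnab\times\Psi)_{L^2(D)}$ and leaves $\|\bfnab\times\Psi\|^2_{L^2(D)}=0$; hence $\Psi$ is constant on the connected set $D$. Its mean vanishes, because $\int_D\Delta\psi=\int_{\partial D}\partial_n\psi=0$ and $\int_D\psi=0$, so $\Psi\equiv 0$; then \eqref{eq:derived_Helmholtz} reduces to $\bfnab\Phi=0$, and the vanishing trace of $\Phi$ gives $\Phi\equiv 0$. (One could instead quote uniqueness in the Helmholtz decomposition of the zero field, but the direct computation avoids verifying that $\Phi,\Psi$ lie in precisely the spaces of that lemma.)

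With $\Phi=\Psi=0$, the identities $-\Delta\phi=(\omega^2-\beta^2)\phi$ in $H^1_0(D)$ and $-\Delta\psi=(\omega^2-\beta^2)\psi$ with $\partial_n\psi=0$, $\int_D\psi=0$, say that $\phi$ is either zero or a Dirichlet eigenfunction with $\lambda_i=\omega^2-\beta^2$, and $\psi$ is either zero or a Neumann eigenfunction with $\mu_j=\omega^2-\beta^2$ (the zero-mean normalization is automatic for $\mu_j>0$ and merely excludes the constant Neumann mode). Since $\bfE\neq 0$, at least one potential is nonzero, giving the trichotomy: $\psi\neq 0$, $\phi=0$ is family (a); $\phi\neq 0$, $\psi=0$ is family (b); and $\phi\neq 0$, $\psi\neq 0$ forces $\lambda_i=\mu_j$, which is family (c). For the multiplicities I would use that $\psi\mapsto\bfnab\times\psi$ and $\phi\mapsto\bfnab\phi$ are injective on zero-mean, resp.\ $H^1_0$, functions (their kernels are the constants), and that a curl $\bfnab\times\psi_j$ and a gradient $\bfnab\phi_i$ are linearly independent: applying $\div$ to $A\bfnab\times\psi_j+B\bfnab\phi_i=0$ gives $-B\lambda_i\phi_i=0$, whence $B=0$ since $\lambda_i>0$, and then $A=0$. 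Thus in case (c) the $\beta^2$-eigenspace is the (direct) sum of the two and has dimension $\operatorname{mult}(\lambda_i)+\operatorname{mult}(\mu_j)$, and in cases (a), (b) its dimension is $\operatorname{mult}(\mu_j)$, resp.\ $\operatorname{mult}(\lambda_i)$.

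The step I anticipate as the main obstacle is the trace and regularity bookkeeping of the first paragraph: for a domain with merely piecewise-smooth boundary, one must justify carefully that the Helmholtz potentials inherit exactly the boundary conditions $\phi\in H^1_0(D)$ and $\partial_n\psi=0$, together with the information that $\Delta\phi$ vanishes on $\partial D$, and with enough regularity ($\Delta\psi\in H^1(D)$, $\Delta\phi\in H^1_0(D)$) to make the integration by parts against $\bfnab\times\Psi$ legitimate. A minor additional point: \eqref{eq:Eeigenproblem} was derived from the coupled system under the hypothesis $\beta\neq 0$, so the degenerate value $\beta=0$ (i.e.\ $\omega^2$ itself a Dirichlet or Neumann eigenvalue) should be excluded at the outset or dispatched by a brief separate remark.
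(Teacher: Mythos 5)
Your proposal follows essentially the same route as the paper: Helmholtz decomposition of $\bfE$, substitution into (\ref{eq:Eeigenproblem}) to obtain (\ref{eq:derived_Helmholtz}), testing against $\bfnab\times\Psi$ to conclude $\Psi=\mathrm{const}$, the zero-mean computation to get $\Psi\equiv 0$ and then $\Phi\equiv 0$, and finally the trichotomy according to which potentials are nonzero. Your additions (the explicit injectivity/linear-independence argument for the multiplicities, the flag about trace regularity on a piecewise-smooth boundary, and the remark that $\beta\neq 0$ was assumed in deriving (\ref{eq:Eeigenproblem})) are correct refinements of details the paper leaves implicit.
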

In the same way, we prove the analogous result for eigenproblem~(\ref{eq:Heigenproblem}).
\begin{lemma}
\label{lemma:eigenvalues2}
Let $(\lambda_j,\phi_j)$, $j \in \doubleIN$, and $(\mu_i,\psi_i)$, $i \in \doubleIN_0$, 
denote the Dirichlet and Neumann eigenpairs of the Laplacian in the domain $D$  with $\mu_0 = 0$.
The eigenvalues $\beta_{n}^2$ for~(\ref{eq:Heigenproblem})  are classified into the following three families.
\begin{enumerate}[(a)]
\item $\beta^2_{n} = \omega^2 - \mu_i$ with $\mu_i$ distinct from all $\lambda_j$. The corresponding eigenvectors are gradients:
$$
\bfH = \bfnab \psi_i \, ,
$$
with (geometric) multiplicity of $\beta^2_{n}$ equal to the (geometric) multiplicity of $\mu_i$.
\item $\beta^2_{n} = \omega^2 - \lambda_j$ with $\lambda_j$ distinct from all $\mu_i$. The corresponding eigenvectors are curls:
$$
\bfH = \bfnab \times \phi_j\, ,
$$
with (geometric) multiplicity of $\beta^2_{n}$ equal to the (geometric) multiplicity of $\lambda_j$.
\item $\beta^2_{n} =  \omega^2 - \mu_i = \omega^2 - \lambda_j$ for $\mu_i = \lambda_j$. The corresponding eigenvectors are linear combinations of gradients and curls:
$$
\bfH = A \bfnab \psi_i + B \bfnab \times \phi_j\, , \quad A,B \in \doubleIC\, ,
$$
with (geometric) multiplicity of $\beta^2_{n}$ equal to the sum of (geometric) multiplicities of $\mu_i$ and $\lambda_j$.
\end{enumerate}
\end{lemma}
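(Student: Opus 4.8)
The plan is to repeat the argument just used for Lemma~\ref{lemma:eigenvalues}, with the roles of the two scalar potentials interchanged. Assuming as before that $D$ is simply connected, I would represent $\bfH$ through the second Helmholtz decomposition \eqref{eq:Helmholtz_decomposition2}, $\bfH = \bfnab\times\phi + \bfnab\psi$ with a unique $\phi\in H^1_0(D)$ and a unique $\psi\in H^1(D)$ with $\int_D\psi = 0$, and insert this into \eqref{eq:Heigenproblem}. Using the 2D identities ($\curl(\bfnab\times\phi) = -\Delta\phi$, $\div(\bfnab\times\phi)=0$, $\curl(\bfnab\psi)=0$, $\div(\bfnab\psi)=\Delta\psi$) one gets $\curl\bfH_t = -\Delta\phi$ and $\div\bfH_t = \Delta\psi$, so that \eqref{eq:Heigenproblem} collapses to
\[
\bfnab\times(\underbrace{-\Delta\phi+(\beta^2-\omega^2)\phi}_{=:\Phi})+\bfnab(\underbrace{-\Delta\psi+(\beta^2-\omega^2)\psi}_{=:\Psi})=0,
\]
which is exactly the decomposition \eqref{eq:Helmholtz_decomposition2} of the zero field.

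The next step is to check the function spaces. By construction $\phi\in H^1_0(D)$; since the curl of an $H^1_0(D)$ function has vanishing normal trace on $\ptl D$, the condition $\bfH_t\in H_0(\div,D)$ (equivalently, the implication ``$\bfn\times\bfE_t=0\Rightarrow\bfn\cdot\bfH_t=0$'' noted above) forces $\frac{\ptl\psi}{\ptl n}=0$ on $\ptl D$. The regularity constraints $\curl\bfH_t\in H^1_0(D)$ and $\div\bfH_t\in H^1(D)$ in \eqref{eq:Heigenproblem} say precisely that $\Delta\phi\in H^1_0(D)$ and $\Delta\psi\in H^1(D)$, hence $\Phi\in H^1_0(D)$ and $\Psi\in H^1(D)$; and integrating $-\Delta\psi$ by parts against the constant $1$, using $\frac{\ptl\psi}{\ptl n}=0$ together with $(\psi,1)_{L^2(D)}=0$, gives $(\Psi,1)_{L^2(D)}=0$. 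Uniqueness in \eqref{eq:Helmholtz_decomposition2} then yields $\Phi=\Psi=0$, i.e.\ $-\Delta\phi=(\omega^2-\beta^2)\phi$ with $\phi\in H^1_0(D)$ and $-\Delta\psi=(\omega^2-\beta^2)\psi$ with $\psi\in H^1(D)$, $\frac{\ptl\psi}{\ptl n}=0$. Thus either $\phi\equiv 0$ or $\phi=\phi_i$ with $\lambda_i=\omega^2-\beta^2$, and either $\psi\equiv 0$ or $\psi=\psi_j$ is a Neumann eigenfunction (necessarily nonconstant, by the zero-mean normalization) with $\mu_j=\omega^2-\beta^2$.

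Finally, the three families drop out from the same case split as in Lemma~\ref{lemma:eigenvalues}: if $\omega^2-\beta^2$ is a Neumann but not a Dirichlet eigenvalue then $\phi\equiv 0$ and $\bfH=\bfnab\psi_j$; if it is a Dirichlet but not a Neumann eigenvalue then $\psi\equiv 0$ and $\bfH=\bfnab\times\phi_i$; and if $\lambda_i=\mu_j$ then the eigenspace is spanned by $\bfnab\psi_j$ and $\bfnab\times\phi_i$, so $\bfH=A\bfnab\psi_j+B\bfnab\times\phi_i$, with multiplicities equal to that of $\mu_j$, that of $\lambda_i$, and their sum, respectively. I do not expect a genuine obstacle here: the argument is structurally identical to that of Lemma~\ref{lemma:eigenvalues}. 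The only points that demand care are keeping the signs in the 2D vector-calculus identities of \eqref{eq:identities} consistent, and translating the boundary and regularity data of \eqref{eq:Heigenproblem} correctly — in particular that it is the stronger constraint $\curl\bfH_t\in H^1_0(D)$ (rather than merely $\curl\bfH_t\in H^1(D)$) that places $\Phi$ in $H^1_0(D)$, and that $\bfH_t\in H_0(\div,D)$ combined with $\phi\in H^1_0(D)$ is what produces the homogeneous Neumann condition on $\psi$.
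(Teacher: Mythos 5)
Your proposal is correct and follows exactly the route the paper intends: the paper proves Lemma~\ref{lemma:eigenvalues} in detail and then states that Lemma~\ref{lemma:eigenvalues2} is proved ``in the same way,'' which is precisely the mirrored argument you carry out (second Helmholtz decomposition, reduction to the decomposition of the zero field, boundary/regularity checks placing $\Phi \in H^1_0(D)$ and giving $\Psi$ zero mean via the Neumann condition, then uniqueness). Your explicit attention to where the Neumann condition on $\psi$ and the membership $\Phi\in H^1_0(D)$ come from actually supplies details the paper leaves implicit.
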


The following example shows that multiple eigenvalues can arise. 
\begin{example}[Cylindrical waveguide] 
\label{ex:cylinder}
Consider the Dirichlet or Neumann Laplace eigenvalue problem  in a unit disk $B_1(0)$ 
$$
- \Delta u = \lambda u, \quad \lambda = \nu^2\, .
$$
For the Dirichlet problem the operator is positive definite, so $\nu > 0 $, for the Neumann problem, $u = \mathrm{const}$ corresponds to the zero eigenvalue,
all other eigenvalues are positive as well. By introducing polar coordinates $(r,\theta)$ and separating variables, one arrives at 
eigenpairs $(u,\JMM{\nu^2})$ of the form 
\begin{align*}
u_{0,m}(r,\theta) &= J_0(\nu_{0,m} r), & m&=1,2,\ldots, && \\
u_{k,m} (r,\theta) & = J_k(\nu_{k,m} r) \cos (k\theta),  & u_{k,m}(r,\theta) &= J_k(\nu_{k,m} r) \sin (k\theta),  
&
k,m=1,2,\ldots, 
\end{align*}
where the values $\nu_{k,m}$ are the zeros of the Bessel functions or its derivative: 
\begin{align*}
\mbox{ Dirichlet b.c.:} && J_k(\nu_{k,m})=  0 \qquad k=0,1,\ldots, \quad m=1,2,\ldots, \\ 
\mbox{ Neumann b.c.:} && J^\prime_k(\nu_{k,m})=  0 \qquad k=0,1,\ldots, \quad m=1,2,\ldots. 
\end{align*}
We notice that for $k > 0$, the eigenvalue $\JMM{\nu_{k,m}^2}$ is a double eigenvalue. 
\eremk
\end{example}
\subsection{Stability analysis}
%-----------------------------------------
As in the case of the Helmholtz equation in (\ref{eq:non-degeneracy}), we require the frequency $\omega$ to 
be such that the eigenvalues $\beta_n \ne 0$, i.e., our analysis will be performed under the assumption 
\begin{align}
\label{eq:non-degeneracy-maxwell} 
\lambda^2_j - \omega &\ne 0 \quad \forall j \in \doubleIN 
&\mbox{ and } \qquad 
\mu^2_i - \omega &\ne 0 \quad \forall i \in \doubleIN_0, 
\end{align}
where $\lambda_j$, $ j\in \doubleIN$, and $\mu_i$, $i \in \doubleIN_0$, are the Dirichlet and Neumann eigenvalues of the 
Laplacian on $D$. 
In this section, we show
\begin{theorem}
\label{thm:stability-maxwell} 
Let $c_0 > 0$ and assume (\ref{eq:non-degeneracy-maxwell}).
There is $C > 0$ independent of $L \ge c_0$ (but possibly dependent on $\omega$) such that
the solution $(\bfE,\bfH)$ of (\ref{eq:primal-maxwell}) satisfies
\begin{equation}
\label{eq:thm:stability-maxwell-1} 
\|(\bfE,\bfH)\|_{L^2(\Omega)} \leq C L \|(\bff,\bfg)\|_{L^2(\Omega)}.
\end{equation}
Additionally, for every $(\bfE,\bfH) \in D(A^{\tmaxwell})$ we have 
\begin{equation}
\label{eq:thm:stability-maxwell-2} 
\|(\bfE,\bfH)\|_{L^2(\Omega)} \leq C L \|A^{\tmaxwell}(\bfE,\bfH)\|_{L^2(\Omega)}.
\end{equation}
%That is, the associated operator satisfies $\|A(\bfE,\bfH)\|_{L^2(\Omega)} \ge C L^{-1} \|(\bfE,\bfH)\|_{L^2(\Omega)}$ for all
%$(\bfE,\bfH) \in D(A)$. 
\end{theorem}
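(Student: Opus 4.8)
The plan is to mirror the proof of Theorem~\ref{thm:stability-helmholtz}. The product geometry $\Ome = D\times(0,L)$ and the $z$-independence of the coefficients again permit a separation into transverse modes, now using the expansion systems suggested by the Helmholtz decompositions \eqref{eq:Helmholtz_decomposition}--\eqref{eq:Helmholtz_decomposition2} and the eigenproblem analysis of Section~\ref{sec:maxwell}. Let $(\lambda_i,\phi_i)$ and $(\mu_j,\psi_j)$ be the $L^2(D)$-normalized Dirichlet and Neumann eigenpairs of $-\Delta$ on $D$; by the Helmholtz decomposition lemma the families $\{\bfnab\phi_i\}\cup\{\bfnab\times\psi_j\}$ and $\{\bfnab\psi_j\}\cup\{\bfnab\times\phi_i\}$ are $L^2(D)^2$-orthogonal bases, and I write
\[
\bfE_t = \sum_i\alpha_i(z)\,\bfnab\phi_i + \sum_j\beta_j(z)\,\bfnab\times\psi_j,\qquad E_3=\sum_i\gamma_i(z)\,\phi_i,
\]
\[
\bfH_t = \sum_j\delta_j(z)\,\bfnab\psi_j + \sum_i\eta_i(z)\,\bfnab\times\phi_i,\qquad H_3=\sum_j\zeta_j(z)\,\psi_j.
\]
As is implicit in the construction of $\dtnm$, I work under the non-degeneracy assumption (the analog of \eqref{eq:non-degeneracy}) that $\ome^2$ is neither a Dirichlet nor a Neumann eigenvalue of $-\Delta$ on $D$, so that the transverse wavenumbers $\kappa_i := \sqrt{\lambda_i - \ome^2}$ and $\hat{\kappa}_j := \sqrt{\mu_j - \ome^2}$ (principal branch) are all nonzero; only finitely many of them are purely imaginary (propagating modes), all others real and positive (evanescent modes), and $|\kappa_i|, |\hat{\kappa}_j| \to \infty$ as $i,j\to\infty$.

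Substituting these expansions into the transverse form \eqref{eq:1st_order_transient_system} of \eqref{eq:primal-maxwell} and using the 2D identities \eqref{eq:identities} and orthogonality, the system decouples into countably many finite ODE systems in $z$: a ``TM-type'' family indexed by the $\lambda_i$ (coupling $\alpha_i,\gamma_i,\eta_i$) and a ``TE-type'' family indexed by the $\mu_j$ (coupling $\beta_j,\zeta_j,\delta_j$); in the common-eigenvalue case $\lambda_i=\mu_j$ of Lemma~\ref{lemma:eigenvalues} the two chains merely share a wavenumber but remain decoupled. Eliminating the longitudinal coefficient and the magnetic transverse coefficient within each family --- the analog of passing from the first-order Helmholtz system to the single weak equation \eqref{eq:thm:stability-helmholtz-30} --- leaves, for each transverse mode, a one-dimensional weak problem for the $z$-profile of the electric transverse coefficient that is precisely of the form $a^{1D}_\kappa(u,w) = (\cdot,w)_{L^2(0,L)} + (\cdot,w')_{L^2(0,L)}$ covered by Lemma~\ref{lemma:ihl}: $\kappa$ is the relevant transverse wavenumber; the condition $\gamma_t\bfE=0$ on $\Gammai$ forces $\bfE_t|_{z=0}=0$, hence the test space is $H^1_{(0}(0,L)$; the boundary term $\kappa\,u(L)\,\overline{w(L)}$ arises from the outflow condition \eqref{eq:primal-maxwell-e} ($\pi_t\bfH=\dtnm\bfE$ on $\Gammao$) through the modal representation of $\dtnm$, an analog of \eqref{eq:dtn} that I would record first in Section~\ref{sec:maxwell}; and the right-hand sides are built from modal components of $\bff,\bfg$, with the $z$-derivatives appearing only weakly (as $(\cdot,w')$-terms, so $L^2$-data suffices) and with possible wavenumber-dependent prefactors that will be absorbed by the modal weights below.

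Applying Lemma~\ref{lemma:ihl}(\ref{item:ihl-i}) to the finitely many propagating modes (bounds of order $L$) and Lemma~\ref{lemma:ihl}(\ref{item:ihl-ii}) to the evanescent modes (bounds of order $|\kappa|^{-1}$, resp.\ $1$), I reconstruct $\bfE_t$ from its coefficients and recover $E_3,\bfH_t,H_3$ from the first-order relations (just as $\bfu = (i\ome)^{-1}(\bfg - \fraka\nabla p)$ was recovered in the proof of Theorem~\ref{thm:stability-helmholtz}), then sum the modal estimates. The summation goes through exactly as in Lemmas~\ref{lemma:L2-estimate}--\ref{lemma:L2div-estimate}: the $L^2(\Ome)$-norm of, say, the gradient part of $\bfE_t$ weights the $i$-th coefficient by $\|\bfnab\phi_i\|_{L^2(D)}^2 = \lambda_i$, and the excess powers of $\kappa_i$ (resp.\ $\hat{\kappa}_j$) coming from the modal bounds are absorbed using that there are finitely many propagating modes and that $\sqrt{\lambda_i}/|\kappa_i|$ and $\sqrt{\mu_j}/|\hat{\kappa}_j|$ are bounded uniformly --- the Maxwell analogs of \eqref{eq:lambda-tildelambda}. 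This yields \eqref{eq:thm:stability-maxwell-1}; reversing the argument, as in the second half of the proof of Theorem~\ref{thm:stability-helmholtz} (including the trace/support argument to verify $(\bfE,\bfH)\in D(A^{\tmaxwell})$), gives existence and uniqueness for arbitrary $(\bff,\bfg)\in L^2(\Ome)$. Finally \eqref{eq:thm:stability-maxwell-2} follows at once: for $(\bfE,\bfH)\in D(A^{\tmaxwell})$ set $(\bff,\bfg):=A^{\tmaxwell}(\bfE,\bfH)$ and apply \eqref{eq:thm:stability-maxwell-1}.

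The main obstacle is the bookkeeping of the reduction: confirming that the elimination genuinely produces a one-dimensional weak problem literally of the $a^{1D}_\kappa$ form required by Lemma~\ref{lemma:ihl} --- in particular extracting the outflow term with the correct sign and scaling from $\dtnm$, and keeping every data term in weak form --- and then tracking the $\lambda_i,\mu_j$ weights so that every contribution to $\|(\bfE,\bfH)\|_{L^2(\Ome)}$, including the longitudinal components $E_3,H_3$ and the doubly-represented common-eigenvalue modes, is controlled after summation. As in the Helmholtz case, some care is also needed for the functional-analytic setup of $\dtnm$ (its mapping properties between the relevant trace spaces) and for checking that the reconstructed fields satisfy all the boundary conditions encoded in $D(A^{\tmaxwell})$.
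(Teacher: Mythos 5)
Your proposal follows essentially the same route as the paper: the same transverse expansion in Dirichlet/Neumann Laplace eigenfunctions (gradients, curls, and longitudinal scalar parts), the same decoupling into two families of three-coefficient ODE subsystems in $z$, elimination down to a one-dimensional weak problem of the $a^{1D}_\kappa$ form handled by Lemma~\ref{lemma:ihl} with the outflow term supplied by the modal $\dtnm$, summation via orthogonality and the boundedness of $\sqrt{\lambda_i}/|\tlam_i|$-type ratios, and finally (\ref{eq:thm:stability-maxwell-2}) by setting $(\bff,\bfg):=A^{\tmaxwell}(\bfE,\bfH)$. The bookkeeping you flag as the main obstacle is exactly what the paper carries out in Lemmas~\ref{lemma:alpha-delta-zeta} and \ref{lemma:beta-eta-gamma} and the corresponding appendices, so the plan is sound as stated.
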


%-----------------------------------------
\subsubsection{Reduction of the waveguide problem to an ODE system}
%-----------------------------------------
Let $\{(\mu_i,\psi_i)\}_{i \ge 0}$ and $\{(\lambda_j, \phi_j)\}_{j \ge 1}$ denote the Neumann and Dirichlet eigenpairs for the Laplace operator in $D$.
We normalize the eigenvectors for $i$, $j \ge 1$ (i.e., excluding the case $\mu_0 = 0$) by 
$$
\Vert \bfnab \psi_i \Vert^2 = \Vert \bfnab \times \psi_i \Vert^2 =1,\quad \Vert \bfnab \phi_j  \Vert^2 = \Vert \bfnab \times \phi_j  \Vert^2= 1
$$
so that 
$$
\Vert \psi_i \Vert^2 = \mu_i^{-1},\quad \Vert  \phi_j \Vert^2 =\lambda_j^{-1}\, .
$$
Consistently with Lemmas~\ref{lemma:eigenvalues} and \ref{lemma:eigenvalues2},
we make the following ansatz for $\bfE$ and $\bfH$:
\be
\begin{array}{rl}
\bfE & = \sum_{i\ge 1} \left(\begin{array}{c} \bfnab \times \psi_i \\ 0 \end{array}\right) \alpha_i(z) + \sum_{j\ge 1} \left(\begin{array}{c} \bfnab \phi_j \\ 0 \end{array}\right)\beta_j(z) + \sum_{j \ge 1} \ez \phi_j \gamma_j(z), \\[5pt]
\bfH & = \sum_{i\ge 1} \left(\begin{array}{c} \bfnab  \psi_i \\ 0 \end{array}\right) \delta_i(z) + \sum_{ j  \ge 1}\left(\begin{array}{c} \bfnab \times \phi_j \\ 0 \end{array}\right) \eta_j(z) + \sum_{i \ge 1} \ez \psi_i \zeta_i(z) \, .
\end{array}
\label{eq:ansatz}
\ee
Substituting into the system~(\ref{eq:1st_order_transient_system})
and testing (in the sense of $L^2$-product) the first equation with $(\bfnab \psi_i, 0)^\top, (\bfnab \times \phi_j, 0)^\top, \ez \psi_i$, and
the second equation with $(\bfnab \times \psi_i,0)^\top, (\bfnab \phi_j, 0)^\top, \ez \phi_j$, we obtain a system of six ODEs
(see Lemma~\ref{lemma:maxwell-ODE-details} for a more detailed derivation): 
\begin{subequations}
\label{eq:maxwell-ODE}
\begin{alignat}{2}
\label{eq:maxwell-1}
\alpha_i' - \bi \omega \delta_i & = (\bff,(\bfnab \psi_i,0)^\top)_{L^2(D)}\ && =: f_{i,1},  \\
\label{eq:maxwell-2}
- \beta_j' + \gamma_j - \bi \omega \eta_j & = (\bff, (\bfnab \times \phi_j,0)^\top)_{L^2(D)}\ && =: f_{j,2}, \\
\label{eq:maxwell-3}
\alpha_i - \bi \omega \mu_i^{-1} \zeta_i & = (\bff, \ez \psi_i)_{L^2(D)}\ && =: f_{i,3}, \\
\label{eq:maxwell-4}
- \delta_i' + \zeta_i + \bi \omega \alpha_i & = (\bfg,(\bfnab \times \psi_i,0)^\top)_{L^2(D)}\ && =: g_{i,1},  \\
\label{eq:maxwell-5}
\eta_j' + \bi \omega \beta_j & = (\bfg, (\bfnab \phi_j,0)^\top)_{L^2(D)}\ && =: g_{j,2}, \\
\label{eq:maxwell-6}
\eta_j + \bi \omega \lambda_j^{-1}\gamma_j & = (\bfg, \ez \phi_j)_{L^2(D)}\ && =: g_{j,3}.
\end{alignat}
\end{subequations}
Note that the system decouples into  two subsystems consisting of two first-order ODEs and one algebraic equation. 
The ODE system is complemented with boundary conditions. (By Lemma~\ref{lemma:maxwell-ODE-details}, the functions 
$\alpha_i$, $\beta_i$, $\delta_i$, $\eta_i \in H^1(0,L)$ so that by Sobolev's embedding theorem one may impose boundary
conditions at $0$ and $L$.) The condition (\ref{eq:primal-maxwell-b}) requires the conditions
\begin{subequations}
\label{eq:initial-conditions}
\begin{align}
\label{eq:initial-condition-alpha}
\alpha_i(0) & = 0 \quad \forall i \in \doubleIN , \\
\label{eq:initial-condition-beta}
\beta_j(0) & = 0 \quad \forall j \in \doubleIN .
\end{align}
\end{subequations}
The condition (\ref{eq:primal-maxwell-d}) that waves be outgoing takes the following form.
Assuming $\bff = 0$ and $\bfg = 0$ on $(L,\infty)$, one obtains for $\alpha_i$ and $\beta_j$
the second-order ODEs (see the proofs of Lemmas~\ref{lemma:alpha-delta-zeta}, \ref{lemma:beta-eta-gamma} for some more details)
\begin{alignat}{2}
\label{eq:widetilde-mu_i}
- \alpha_i^{\prime\prime} - \widetilde{\mu}_i^2 \alpha_i &= 0,
\qquad
\widetilde{\mu}_i^2 &&:= \mu_i - \omega^2, \\
\label{eq:widetilde-lambda_j}
- \beta_j^{\prime\prime} - \widetilde{\lambda}_j^2 \beta_j &= 0,
\qquad
\widetilde{\lambda}_j^2 &&:= \lambda_j - \omega^2, 
\end{alignat}
with fundamental solutions $e^{\pm \widetilde{\mu}_i z}$ and $e^{\pm \widetilde{\lambda}_j z}$.
For solutions to be outgoing, we select the minus sign and realize this with the boundary
conditions at $z = L$ given by
\begin{subequations}
\label{eq:impedance-bc}
\begin{align}
\label{eq:imdepance-bc-alpha}
\alpha_i^\prime(L) + \widetilde{\mu}_i \alpha_i(L) = 0, \\
\label{eq:imdepance-bc-beta}
\beta_j^\prime(L) + \widetilde{\lambda}_j \beta_j(L) = 0.
\end{align}
\end{subequations}
%\begin{remark}
%\label{rem:alternative-to-impedance-bc}
In the following, however, we will not use conditions (\ref{eq:impedance-bc}) directly. Instead, we obtain 
from (\ref{eq:maxwell-ODE}) with $\bff = 0 = \bfg$ and (\ref{eq:impedance-bc}) the alternative condition
\begin{subequations}
\label{eq:dtn-condition-maxwell}
\begin{align} 
\label{eq:dtn-condition-maxwell-a}
\bi \ome \delta_i(L)  &= -\widetilde{\mu}_i \alpha_i(L), \\
\label{eq:dtn-condition-maxwell-b}
\widetilde{\lambda}_j \eta_j(L) & = \bi \ome \beta_j(L), 
\end{align}
\end{subequations}
that is, a relation between the electric field $\bfE$ and the magnetic 
field $\bfH$ on $\Gammao$. 
%\end{remark}
\begin{remark}
In the recent, independent work \cite{kim17}, the expansions (\ref{eq:ansatz}) based on the Laplace Dirichlet and Neumann eigenvalue problems have 
also been used to formulate the DtN-operator for Maxwell's equations for waveguides and show well-posedness of the truncated waveguide problem. 
The present analysis goes beyond that by revealing the dependence on the length of the waveguide. 
\eremk
\end{remark}
%-----------------------------------------
\subsubsection{The operator \texorpdfstring{$\dtnm$}{DtNmw}}
%-----------------------------------------
We observe that for sufficiently smooth $\bfE$, $\bfH$, the values $\alpha_i(L)$, $\beta_j(L)$, $\delta_i(L)$, $\eta_j(L)$ are 
the coefficients when expanding the tangential components $(\pi_t \bfE)|_{\Gammao}$, $(\pi_t \bfH)|_{\Gammao}$ in terms of the 
orthogonal systems $(\bfnab \times \psi_i)_i$, $(\bfnab \phi_j)_j$ on the one hand and $(\bfnab \psi_i)_i$, $(\bfnab \times \phi_j)_j$ on the other hand. 
That is, $(\pi_t \bfE)|_{\Gammao}$ can be expanded as 
\begin{align*}
(\pi_t \bfE)|_{\Gammao} & = \sum_{i\ge 1} \alphahat(\bfE) \bfnab \times \psi_i + \sum_{j\ge 1} \betahat(\bfE) \bfnab \phi_j
\end{align*}
where $\alphahat$, $\betahat$ are linear functionals on $\bfH_{0,\Gammam}(\curl,\Ome)$ and satisfy, for $\bfE$ sufficiently smooth, 
$\alphahat(\bfE) = (\pi_t \bfE, \bfnab \times \psi_i)_{L^2(\Gammao)} = \alpha_i(L)$ and   
$\betahat(\bfE) = (\pi_t \bfE, \bfnab \phi_j)_{L^2(\Gammao)} = \beta_j(L)$. (See Theorem~\ref{thm:alphai-betaj} for the precise statement.) 
%For sufficiently smooth functions $\alpha_i$, $\beta_j$ one then has $\alpha_i(L) = \alphahat(\bfE)$, $\beta_j(L) = \betahat(\bfE)$. 
An analogous expansion holds for $\pi_t \bfH$. 

Motivated by the condition (\ref{eq:dtn-condition-maxwell}) on $\Gammao$, we define the operator 
$\dtnm:\bfH_{0,\Gammam}(\curl,\Ome) \rightarrow \bfH^{-1/2}(\Gammao)$ by 
\begin{align}
\label{eq:dtnm}
\dtnm \bfE &:= \sum_{ i \ge 1} -\frac{\widetilde{\mu}_i}{\bi \ome} \alphahat(\bfE) \bfnab \psi_i 
+ \sum_{ j\ge 1} \frac{\bi \ome}{\widetilde{\lambda}_j} \betahat(\bfE) \bfnab \times \phi_j. 
\end{align}
We refer to Section~\ref{sec:mapping-properties-dtnm} for the mapping properties of this 
thus formally defined operator.

%-----------------------------------------
\subsubsection{Analysis of the system (\ref{eq:maxwell-ODE}) and proof of Theorem~\ref{thm:stability-maxwell}}
%-----------------------------------------

%
%
Representing $\bfE$ in the form (\ref{eq:ansatz})$_1$, we obtain:
$$
\int_D \vert \bfE(x,y,z) \vert^2 \, \diff x \diff y  = \sum_{i\ge 1} \left[ \vert \alpha_i(z) \vert^2 + \vert \beta_i(z) \vert^2 + \lambda_i^{-1} \vert \gamma_i(z) \vert^2 \right] \, 
$$
with an analogous formula for $\|\bfH\|^2_{L^2(D)}$. Hence, 
\begin{align*}
\Vert \bfE \Vert^2 & = \int_0^L \int_D \vert E(x,y,z) \vert^2 \, \diff x \diff y \diff z = \sum_{i\ge 1} \Vert (\alpha_i,\beta_i,\lambda_i^{-\half} \gamma_i )\Vert_{L^2(0,L)}^2 \, , \\
\Vert \bfH \Vert^2 &= \sum_{i \ge 1} \Vert (\delta_i,\eta_i,\mu_i^{-\half} \zeta_i ) \Vert_{L^2(0,L)}^2 \, .
\end{align*}
Using the mutual orthogonality of $(\bfnab \psi_i,0)^\top$, $(\bfnab \times \phi_i,0)^\top$, $\ez \psi_i$ in $L^2(D)$, the function $\bff$ in (\ref{eq:primal-maxwell})
can be represented in the form
\begin{align*}
\bff &= \sum_{i \ge 1} \Bigl[
(\bff,(\bfnab \psi_i,0)^\top)_{L^2(D)} (\bfnab \psi_i,0)^\top + (\bff,(\bfnab \times \phi_i,0)^\top)_{L^2(D)} (\bfnab \times \phi_i,0)^\top 
\\ 
& \qquad \qquad \mbox{} + (\bff,\ez \mu_i^{\half} \psi_i)_{L^2(D)} \ez \mu_i^{\half} \psi_i \Bigr]. 
\end{align*}
Consequently,
\begin{align*}
\int_D \vert \bff(x,y,z) \vert^2 \, \diff x \diff y & = \sum_{i \ge 1} \Bigl[ \vert (\bff,(\bfnab \psi_i,0)^\top)_{L^2(D)} (z) \vert^2 + \vert (\bff, (\bfnab \times \phi_i,0)^\top)_{L^2(D)} (z) \vert^2 \\
& \qquad \mbox{} +
\vert (\bff,\ez \psi_i)_{L^2(D)} (z) \vert^2 \mu_i \Bigr]
\end{align*}
and thus
\begin{align*}
\Vert \bff \Vert^2 & = \int_0^L \int_D \vert \bff(x,y,z) \vert^2 \, \diff x \diff y \diff z \\
& = \sum_{i \ge 1} \Vert ( (\bff,(\bfnab \psi_i,0)^\top), (\bff,(\bfnab \times \phi_i,0)^\top), \mu_i^{\half} (\bff, \ez \psi_i)) \Vert^2_{L^2(0,L)} \, .
\end{align*}
Similarly, for the right-hand side $\bfg$ in the second equation in (\ref{eq:primal-maxwell}) we have
$$
\Vert \bfg \Vert^2 = \sum_{i \ge 1} \Vert ((\bfg,(\bfnab \times \psi_i,0)^\top), (\bfg,(\bfnab \phi_i,0)^\top), \lambda_i^{\half} (\bfg, \ez \phi_i)) \Vert^2_{L^2(0,L)} \, .
$$
The formulas for $\Vert \bfE \Vert^2, \Vert \bfH \Vert^2$ and $\Vert \bff \Vert^2, \Vert \bfg \Vert^2$, and the system of ODEs~(\ref{eq:maxwell-ODE}) imply
that sufficient (and necessary) for the boundedness below in Theorem~\ref{thm:stability-maxwell} are the $L^2$-estimates for the subsystems of ODEs:
\begin{subequations}
\label{eq:maxwell-requirement}
\begin{align}
\Vert (\alpha_i,\delta_i,\mu_i^{-\half} \zeta_i ) \Vert_{L^2(0,L)}^2
& \leq C \Vert ((\bff,(\bfnab \psi_i,0)^\top),(\bfg,(\bfnab \times \psi_i,0)^\top), (\bff, \ez \mu_i^{\half} \psi_i) ) \Vert^2_{L^2(0,L)},  \\
\Vert (\beta_i,\eta_i,\lambda_i^{-\half} \gamma_i ) \Vert_{L^2(0,L)}^2
& \leq C \Vert ((\bff,(\bfnab \times \psi_i,0)^\top),(\bfg,(\bfnab  \psi_i,0)^\top), (\bfg, \ez \lambda_i^{\half} \phi_i) ) \Vert^2_{L^2(0,L)}
\end{align}
\end{subequations}
with some constant $C$ independent of $i$.
We have from Lemma~\ref{lemma:ihl}:
\begin{lemma}
\label{lemma:alpha-delta-zeta}
Let $(\alpha_i,\delta_i,\zeta_i)$ solve (\ref{eq:maxwell-1}), (\ref{eq:maxwell-3}),
(\ref{eq:maxwell-4}) together with the boundary conditions
(\ref{eq:initial-condition-alpha}), 
%(\ref{eq:imdepance-bc-alpha}). 
(\ref{eq:dtn-condition-maxwell-a}). 
Assume that
$\inf_{i \ge 1} |\widetilde{\mu}_i| > 0$. Then the following holds:
\begin{enumerate}[(i)]
\item (evanescent modes)
\label{item:lemma:alpha-delta-zeta-i}
There is $C > 0$ independent of $i$ such that for all $i \in \doubleIN$ with $\widetilde{\mu}_i >0$
\begin{align}
\label{eq:lemma:alpha-delta-zeta-1}
\|\alpha^\prime_i\| + \sqrt{\mu_i}\|\alpha_i\| &\leq C \left[ \|f_{i,1}\| + \sqrt{\mu_i} \|f_{i,3}\| + \JMM{\mu_i^{-1/2}}\|g_{i,1}\|\right], \\
\label{eq:lemma:alpha-delta-zeta-2}
\|\delta_i\| & \leq C \left[  \|f_{i,1}\| + \sqrt{\mu_i} \|f_{i,3}\| + \JMM{\mu_i^{-1/2}}\|g_{i,1}\|\right], \\
\label{eq:lemma:alpha-delta-zeta-3}
\mu^{-1/2}_i \|\zeta_i\| &\leq C \left[ \|f_{i,1}\| + \sqrt{\mu_i} \|f_{i,3}\| + \JMM{\mu_i^{-1/2}} \|g_{i,1}\|\right].
\end{align}
\item
(propagating modes)
\label{item:lemma:alpha-delta-zeta-ii}
For all $i \in \doubleIN$ with $\widetilde{\mu}_i \in \bi \doubleIR$ the estimates
(\ref{eq:lemma:alpha-delta-zeta-1})--(\ref{eq:lemma:alpha-delta-zeta-3}) hold
with $\mu_i$ replaced by $1$ and an additional factor $L$
on the right-hand side.
\end{enumerate}
\end{lemma}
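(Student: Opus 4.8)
The plan is to collapse the $3\times 3$ subsystem (\ref{eq:maxwell-1}), (\ref{eq:maxwell-3}), (\ref{eq:maxwell-4}) (with the boundary conditions (\ref{eq:initial-condition-alpha}) and (\ref{eq:dtn-condition-maxwell-a})) onto a single scalar second-order problem for $\alpha_i$ of exactly the type treated in Lemma~\ref{lemma:ihl} with $\kappa = \widetilde{\mu}_i$, to read the bound on $\alpha_i$ off from there, and then to recover $\delta_i$ and $\zeta_i$ from the remaining relations. First I would use the two non-differentiated equations to eliminate $\delta_i$ and $\zeta_i$: (\ref{eq:maxwell-3}) gives $\zeta_i = (i\ome)^{-1}\mu_i(\alpha_i - f_{i,3})$ and (\ref{eq:maxwell-1}) gives $\delta_i = (i\ome)^{-1}(\alpha_i' - f_{i,1})$, where $\delta_i \in H^1(0,L)$ and $\alpha_i \in H^1_{(0}(0,L)$ by Lemma~\ref{lemma:maxwell-ODE-details} and (\ref{eq:initial-condition-alpha}). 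Testing (\ref{eq:maxwell-4}) against $\overline{w}$ for $w \in H^1_{(0}(0,L)$, integrating the $\delta_i'$-term by parts (the endpoint term at $0$ drops since $w(0)=0$; the one at $L$ is replaced using the outgoing condition (\ref{eq:dtn-condition-maxwell-a}), $i\ome\delta_i(L) = -\widetilde{\mu}_i\alpha_i(L)$), substituting the expressions above, multiplying by $i\ome$, and using $\widetilde{\mu}_i^2 = \mu_i - \ome^2$ yields
\[
(\alpha_i',w')_{L^2(0,L)} + \widetilde{\mu}_i^2(\alpha_i,w)_{L^2(0,L)} + \widetilde{\mu}_i\,\alpha_i(L)\overline{w(L)} = i\ome(g_{i,1},w)_{L^2(0,L)} + \mu_i(f_{i,3},w)_{L^2(0,L)} + (f_{i,1},w')_{L^2(0,L)}
\]
for all $w \in H^1_{(0}(0,L)$. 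The point to watch is that $f_{i,1}$, being only $L^2$, must appear through the pairing $(f_{i,1},w')$ — the ``$q_2$''-type right-hand side of Lemma~\ref{lemma:ihl} — rather than as $-f_{i,1}'$.

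Next I would split $\alpha_i = \alpha_i^{(1)} + \alpha_i^{(2)}$, with $\alpha_i^{(1)}$ carrying the ``$q_1$''-type forcing $F_i := i\ome g_{i,1} + \mu_i f_{i,3}$ (paired with $w$) and $\alpha_i^{(2)}$ the ``$q_2$''-type forcing $f_{i,1}$ (paired with $w'$); by linearity and the unique solvability in Lemma~\ref{lemma:ihl} their sum is $\alpha_i$. For evanescent modes ($\widetilde{\mu}_i > 0$), Lemma~\ref{lemma:ihl}(\ref{item:ihl-ii}) gives $\|\alpha_i^{(1)}\|_{1,\widetilde{\mu}_i} \lesssim \widetilde{\mu}_i^{-1}\|F_i\| \le \widetilde{\mu}_i^{-1}\ome\|g_{i,1}\| + \widetilde{\mu}_i^{-1}\mu_i\|f_{i,3}\|$ and $\|\alpha_i^{(2)}\|_{1,\widetilde{\mu}_i} \lesssim \|f_{i,1}\|$; since $\mu_i - \ome^2 = \widetilde{\mu}_i^2 \ge \inf_j|\widetilde{\mu}_j|^2 > 0$ for evanescent $i$, one has $\widetilde{\mu}_i \sim \sqrt{\mu_i}$ (constants depending only on $\ome$ and that infimum), hence $\widetilde{\mu}_i^{-1}\lesssim 1$, $\widetilde{\mu}_i^{-1}\mu_i \lesssim \sqrt{\mu_i}$, and $\|\alpha_i'\| + \sqrt{\mu_i}\|\alpha_i\| \sim \|\alpha_i\|_{1,\widetilde{\mu}_i}$, which together give (\ref{eq:lemma:alpha-delta-zeta-1}). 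For the finitely many propagating modes ($\widetilde{\mu}_i \in i\doubleIR$, so $\mu_i$ and $|\widetilde{\mu}_i|$ are bounded above and, by hypothesis, below), Lemma~\ref{lemma:ihl}(\ref{item:ihl-i}) instead gives $\|\alpha_i^{(1)}\|_{1,|\widetilde{\mu}_i|} \lesssim L\|F_i\| \lesssim L(\|g_{i,1}\| + \|f_{i,3}\|)$ and $\|\alpha_i^{(2)}\|_{1,|\widetilde{\mu}_i|} \lesssim L|\widetilde{\mu}_i|\|f_{i,1}\| \lesssim L\|f_{i,1}\|$, which is precisely (\ref{eq:lemma:alpha-delta-zeta-1}) with $\mu_i$ replaced by $1$ and an extra factor $L$.

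Finally I would recover $\delta_i$ and $\zeta_i$ from the relations of the first step: $\|\delta_i\| \lesssim \|\alpha_i'\| + \|f_{i,1}\| \le \|\alpha_i\|_{1,\widetilde{\mu}_i} + \|f_{i,1}\|$ gives (\ref{eq:lemma:alpha-delta-zeta-2}), and $\mu_i^{-1/2}\|\zeta_i\| \lesssim \sqrt{\mu_i}\,\|\alpha_i\| + \sqrt{\mu_i}\,\|f_{i,3}\| \le \|\alpha_i\|_{1,\widetilde{\mu}_i} + \sqrt{\mu_i}\,\|f_{i,3}\|$ gives (\ref{eq:lemma:alpha-delta-zeta-3}); in the propagating case these hold with every $\mu_i$ set to $1$ and the factor $L$ carried along. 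I expect the only genuinely delicate part to be the bookkeeping in the second step — keeping straight whether $\mu_i$ or $|\widetilde{\mu}_i| = \sqrt{|\mu_i - \ome^2|}$ appears in each term, and using $\inf_i|\widetilde{\mu}_i|>0$ together with the finiteness of $\IP$ to turn all the borderline ratios into constants independent of $L$ and $i$; everything else is a direct transcription of Lemma~\ref{lemma:ihl}.
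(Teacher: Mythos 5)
Your proposal is correct and follows essentially the same route as the paper: reduce the subsystem to the scalar weak formulation $(\alpha_i',w')+\widetilde{\mu}_i^2(\alpha_i,w)+\widetilde{\mu}_i\alpha_i(L)\overline{w(L)}=(f_{i,1},w')+i\ome(g_{i,1},w)+\mu_i(f_{i,3},w)$, apply Lemma~\ref{lemma:ihl} with $\kappa=\widetilde{\mu}_i$ using $\widetilde{\mu}_i\sim\sqrt{\mu_i}$ (evanescent) or $\mu_i\sim 1$ (propagating), and recover $\delta_i$, $\zeta_i$ from (\ref{eq:maxwell-1}) and (\ref{eq:maxwell-3}). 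Your explicit splitting of $\alpha_i$ into the two forcing types and the careful tracking of $\widetilde{\mu}_i$ versus $\mu_i$ only make explicit what the paper leaves implicit.
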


\begin{proof} The component $\alpha_i$ satisfies the following weak form:
For all $v \in H^1_{(0}(0,L)$ 
\begin{align}
\begin{aligned}
(\alpha^\prime_i, v^\prime)_{L^2(0,L)} + & \widetilde{\mu}^2_i (\alpha_i,v)_{L^2(0,L)}
+\widetilde{\mu}_i \alpha_i(L) \overline{v}(L) \\
& = (f_{i,1},v^\prime)_{L^2(0,L)} +
  i\ome (g_{i,1},v)_{L^2(0,L)} + \mu_i (f_{i,3},v)_{L^2(0,L)}.
\end{aligned}
\end{align}
This is obtained by the following steps
(see Appendix~\ref{appendix:alpha-delta-zeta}) for details):
first one eliminates the variable $\zeta_i$; second, one multiplies
(\ref{eq:maxwell-1}) (in the form obtained after removing $\zeta_i$) by
$v^\prime$ and integrates over $(0,L)$; third, one multiplies
(\ref{eq:maxwell-4}) by $v$ and integrates over $(0,L)$; fourth, in the thus obtained
equation the term $(\delta^\prime,v)_{L^2(0,L)}$ is integrated by parts and
the condition (\ref{eq:dtn-condition-maxwell-a}) is employed. 
%$i \ome \delta_i(L) = -\widetilde{\mu}_i \alpha_i (L)$ from

%Remark~\ref{rem:alternative-to-impedance-bc} is used.

\emph{Proof of (\ref{item:lemma:alpha-delta-zeta-i}):} We note
$\widetilde{\mu}_i \sim \sqrt{\mu_i}$ with implied constant independent of $i$.
Lemma~\ref{lemma:ihl} together with the condition $\alpha_i(0) = 0$ from
(\ref{eq:initial-condition-alpha}) then readily implies (\ref{eq:lemma:alpha-delta-zeta-1}).
Combining (\ref{eq:lemma:alpha-delta-zeta-1}) with (\ref{eq:maxwell-1}) provides
(\ref{eq:lemma:alpha-delta-zeta-2}). Finally, (\ref{eq:lemma:alpha-delta-zeta-1}) and
(\ref{eq:maxwell-3}) yield 
(\ref{eq:lemma:alpha-delta-zeta-3}).

We remark that the estimate for $\alpha_i$ is a better by a factor
$\sqrt{\mu_i}$ than required.

\emph{Proof of (\ref{item:lemma:alpha-delta-zeta-ii}):} This case is shown in the
same way as (\ref{item:lemma:alpha-delta-zeta-i}) noting that for the finitely many
propagating modes one has $\mu_i \sim 1$ so that $|\widetilde{\mu}_i | \sim 1$.
\end{proof}
Analogously, we have for the subsystem involving $(\beta_j,\eta_j,\gamma_j)$:

\begin{lemma}
\label{lemma:beta-eta-gamma}
Let $(\beta_j,\eta_j,\gamma_j)$ solve (\ref{eq:maxwell-2}), (\ref{eq:maxwell-5}),
(\ref{eq:maxwell-6}) together with the boundary conditions
(\ref{eq:initial-condition-beta}), 
%(\ref{eq:imdepance-bc-beta}). 
(\ref{eq:dtn-condition-maxwell-b}). 
Assume that
$\inf_j |\widetilde{\lambda}_j| > 0$. Then the following holds:
\begin{enumerate}[(i)]
\item (evanescent modes)
\label{item:lemma:beta-eta-gamma-i}
There is $C > 0$ independent of $j$ such that for all $j$ with $\widetilde{\lambda}_j >0$
\begin{align}
\label{eq:lemma:beta-eta-gamma-1}
\|\beta^\prime_j\| + \sqrt{\lambda_i}\|\beta_j\|
&\leq C \left[ \|f_{j,2}\| + \lambda_j  \|g_{j,3}\| + \sqrt{\lambda_j} \|g_{j,2}\|\right], \\
\label{eq:lemma:beta-eta-gamma-2}
\lambda_j \|\eta_j\| & \leq C \left[  \|f_{j,2}\| +  \lambda_j \|g_{j,3}\| + \sqrt{\lambda_j} \|g_{j,2}\|\right], \\
\label{eq:lemma:beta-eta-gamma-3}
\lambda^{-1/2}_j \|\gamma_j\| &\leq C \left[ \lambda_j^{-1/2} \|f_{j,2}\| + \sqrt{\lambda_j} \|g_{j,3}\| + \|g_{j,2}\|\right].
\end{align}
\item
(propagating modes)
\label{item:lemma:beta-eta-gamma-ii}
For all $j$ with $\widetilde{\lambda}_j \in \bi \doubleIR$ the estimates
(\ref{eq:lemma:beta-eta-gamma-1})--(\ref{eq:lemma:beta-eta-gamma-3}) hold
with $\lambda_j$ replaced by $1$ and an additional factor $L$
on the right-hand side.
\end{enumerate}
\end{lemma}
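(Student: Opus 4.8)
The plan is to follow the proof of Lemma~\ref{lemma:alpha-delta-zeta} essentially verbatim at the level of structure, with the component $\beta_j$ now playing the role that $\alpha_i$ played there and the components $\eta_j$, $\gamma_j$ recovered algebraically once $\beta_j$ is under control. The differences are purely in the bookkeeping of powers of $\lambda_j$, since here the eigenvalue and the right-hand-side data enter the subsystem (\ref{eq:maxwell-2}), (\ref{eq:maxwell-5}), (\ref{eq:maxwell-6}) with different scalings.

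\emph{Step 1 (a scalar weak form for $\beta_j$).} First I would eliminate the algebraic variable $\gamma_j$ from (\ref{eq:maxwell-2}) via (\ref{eq:maxwell-6}), i.e.\ $\gamma_j = \frac{\lambda_j}{i\omega}(g_{j,3}-\eta_j)$, and use the identity $\frac{\lambda_j}{i\omega}+i\omega = \frac{\widetilde\lambda_j^2}{i\omega}$ (cf.\ (\ref{eq:widetilde-lambda_j})) to recast (\ref{eq:maxwell-2}) as the relation $-\beta_j' - \frac{\widetilde\lambda_j^2}{i\omega}\eta_j = f_{j,2}-\frac{\lambda_j}{i\omega}g_{j,3}$. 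Then, for $v\in H^1_{(0}(0,L)$, I would multiply this relation by $\overline{v'}$ and equation (\ref{eq:maxwell-5}) by $\overline v$, integrate over $(0,L)$, integrate the term $(\eta_j',v)_{L^2(0,L)}$ by parts (the endpoint $z=0$ drops because $v(0)=0$), insert the outgoing condition (\ref{eq:dtn-condition-maxwell-b}) in the form $\eta_j(L) = \frac{i\omega}{\widetilde\lambda_j}\beta_j(L)$ into the surviving boundary term, eliminate $(\eta_j,v')_{L^2(0,L)}$ between the two identities, and multiply through by $\widetilde\lambda_j^2/(i\omega)$. This should yield
\[
a^{1D}_{\widetilde\lambda_j}(\beta_j,v) \;=\; -(f_{j,2},v')_{L^2(0,L)} \;+\; \frac{\lambda_j}{i\omega}(g_{j,3},v')_{L^2(0,L)} \;+\; \frac{\widetilde\lambda_j^2}{i\omega}(g_{j,2},v)_{L^2(0,L)} \qquad \forall v\in H^1_{(0}(0,L),
\]
the elementary but slightly tedious computation being entirely parallel to the one sketched in Appendix~\ref{appendix:alpha-delta-zeta}. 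Together with $\beta_j(0)=0$ from (\ref{eq:initial-condition-beta}), this places us in the setting of Lemma~\ref{lemma:ihl} with $\kappa = \widetilde\lambda_j$ and $V = H^1_{(0}(0,L)$.

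\emph{Step 2 (apply Lemma~\ref{lemma:ihl} and back-substitute).} By linearity I would split $\beta_j = \beta_j^{(1)}+\beta_j^{(2)}$, where $\beta_j^{(1)}$ carries the datum $\frac{\widetilde\lambda_j^2}{i\omega}(g_{j,2},\cdot)_{L^2(0,L)}$ (of the $q_1$-type in Lemma~\ref{lemma:ihl}) and $\beta_j^{(2)}$ the datum $(-f_{j,2}+\frac{\lambda_j}{i\omega}g_{j,3},(\cdot)')_{L^2(0,L)}$ (of the $q_2$-type). For evanescent modes $\widetilde\lambda_j>0$, and because only finitely many $\lambda_j$ fall below any fixed threshold one has $\widetilde\lambda_j \sim \sqrt{\lambda_j}$ and $\lambda_j/\widetilde\lambda_j^2 \le C$ uniformly in $j$ (and likewise $L\widetilde\lambda_j$ bounded below, so the constants in Lemma~\ref{lemma:ihl} are uniform); Lemma~\ref{lemma:ihl}(\ref{item:ihl-ii}) then gives $\|\beta_j^{(1)}\|_{1,\widetilde\lambda_j}\le C\widetilde\lambda_j^{-1}\widetilde\lambda_j^2\omega^{-1}\|g_{j,2}\|\sim C\sqrt{\lambda_j}\|g_{j,2}\|$ and $\|\beta_j^{(2)}\|_{1,\widetilde\lambda_j}\le C(\|f_{j,2}\|+\lambda_j\|g_{j,3}\|)$; since $\|\cdot\|_{1,\widetilde\lambda_j}^2\sim\|(\cdot)'\|^2+\lambda_j\|\cdot\|^2$ for these indices, summing the two contributions gives (\ref{eq:lemma:beta-eta-gamma-1}). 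Next I would recover $\eta_j = -\frac{i\omega}{\widetilde\lambda_j^2}\beta_j'-\frac{i\omega}{\widetilde\lambda_j^2}f_{j,2}+\frac{\lambda_j}{\widetilde\lambda_j^2}g_{j,3}$ from the relation of Step~1, whence $\|\eta_j\|\lesssim\lambda_j^{-1}(\|\beta_j'\|+\|f_{j,2}\|)+\|g_{j,3}\|$, and inserting (\ref{eq:lemma:beta-eta-gamma-1}) and multiplying by $\lambda_j$ yields (\ref{eq:lemma:beta-eta-gamma-2}); finally $\gamma_j = \frac{\lambda_j}{i\omega}(g_{j,3}-\eta_j)$ from (\ref{eq:maxwell-6}) gives $\|\gamma_j\|\lesssim\lambda_j\|g_{j,3}\|+\lambda_j\|\eta_j\|$, and dividing by $\sqrt{\lambda_j}$ produces (\ref{eq:lemma:beta-eta-gamma-3}). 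For the finitely many propagating modes, $\widetilde\lambda_j\in i\doubleIR$ with $\lambda_j\sim1$ and $|\widetilde\lambda_j|\sim1$, and repeating Step~2 with Lemma~\ref{lemma:ihl}(\ref{item:ihl-i}) in place of part~(\ref{item:ihl-ii}) reproduces the same estimates with $\lambda_j$ replaced by $1$ and one extra factor $L$ on each right-hand side, which is (\ref{item:lemma:beta-eta-gamma-ii}).

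\emph{Main obstacle.} The only genuinely delicate part is Step~1: deciding which first-order equation to test with $v$ and which with $v'$, carrying out the single integration by parts, and substituting (\ref{eq:dtn-condition-maxwell-b}) in such a way that the outcome is exactly of the form $a^{1D}_{\widetilde\lambda_j}(\beta_j,\cdot)$ with a right-hand side built from the two shapes $(\cdot,v)$ and $(\cdot,v')$ covered by Lemma~\ref{lemma:ihl}. Everything after that is arithmetic with powers of $\lambda_j$; the one point requiring care is the uniformity in $j$ of the comparisons $\widetilde\lambda_j\sim\sqrt{\lambda_j}$ and $\lambda_j/\widetilde\lambda_j^2\le C$, which holds precisely because the set of indices with $|\widetilde\lambda_j|$ small is finite under the standing assumption $\inf_j|\widetilde\lambda_j|>0$.
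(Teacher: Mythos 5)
Your proposal is correct and follows essentially the same route as the paper: eliminate $\gamma_j$ to obtain the relation $-i\omega\beta_j' - \widetilde\lambda_j^2\eta_j = i\omega f_{j,2} - \lambda_j g_{j,3}$, derive the scalar weak form $a^{1D}_{\widetilde\lambda_j}(\beta_j,v) = -(f_{j,2},v') + \frac{\lambda_j}{i\omega}(g_{j,3},v') + \frac{\widetilde\lambda_j^2}{i\omega}(g_{j,2},v)$ exactly as in the paper's Appendix~B.2, apply Lemma~\ref{lemma:ihl}, and back-substitute for $\eta_j$ and $\gamma_j$. Your bookkeeping of the powers of $\lambda_j$ is consistent with the stated estimates (and in fact confirms that the $g_{j,1}$ appearing in (\ref{eq:lemma:beta-eta-gamma-2}) should read $g_{j,2}$).
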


\begin{proof} The component $\beta_j$ satisfies the following weak form:
For all $v \in H^1_{(0}(0,L)$ 
\begin{align}
\begin{aligned}
(\beta^\prime_j, v^\prime)_{L^2(0,L)} + & \widetilde{\lambda}^2_j (\beta_j,v)_{L^2(0,L)}
+\widetilde{\lambda}_j \beta_j(L) \overline{v}(L) \\
& = -(f_{j,2},v^\prime)_{L^2(0,L)} +
  \frac{\lambda_j}{\bi\ome} (g_{j,3},v^\prime)_{L^2(0,L)} + \frac{\widetilde{\lambda}^2_j}{\bi \ome} (g_{j,2},v)_{L^2(0,L)}.
\end{aligned}
\end{align}
This is obtained by the following steps
(see Appendix~\ref{appendix:beta-eta-gamma}) for details):
first one eliminates the variable $\gamma_j$, which yields the additional equation
\begin{equation}
\label{eq:lemma:beta-eta-gamma-10}
-\bi \ome \beta^\prime_j - \widetilde{\lambda}_j^2 \eta_j = \bi \ome f_{j,2} - \lambda_j g_{j,3}.
\end{equation}
Second, one multiplies
(\ref{eq:lemma:beta-eta-gamma-10}) by
$v^\prime$ and integrates over $(0,L)$; third, one multiplies
(\ref{eq:maxwell-5}) by $v$ and integrates over $(0,L)$; fourth, in the thus obtained
equation the term $(\eta_j^\prime,v)_{L^2(0,L)}$ is integrated by parts and
the condition (\ref{eq:dtn-condition-maxwell-a}) is employed. 
%the condition $i \ome \widetilde{\lambda}_j \beta_j(L) = -\widetilde{\lambda}_j^2 \eta_j (L)$
%from Remark~\ref{rem:alternative-to-impedance-bc} is used.

\emph{Proof of (\ref{item:lemma:alpha-delta-zeta-i}):} We note
$\widetilde{\lambda}_j \sim \sqrt{\lambda_j}$ with implied constant independent of $j$.
Lemma~\ref{lemma:ihl} together with the condition $\beta_j(0) = 0$ from
(\ref{eq:initial-condition-beta}) then readily implies (\ref{eq:lemma:beta-eta-gamma-1}).
Combining (\ref{eq:lemma:beta-eta-gamma-1}) with (\ref{eq:lemma:beta-eta-gamma-10}) provides
(\ref{eq:lemma:beta-eta-gamma-2}). Finally, (\ref{eq:lemma:beta-eta-gamma-1}) and
%(\ref{eq:maxwell-6}) yield (\ref{eq:lemma:alpha-delta-zeta-3}).
(\ref{eq:maxwell-6}) yield (\ref{eq:lemma:beta-eta-gamma-3}).

\emph{Proof of (\ref{item:lemma:beta-eta-gamma-ii}):} This case is shown in the
same way as (\ref{item:lemma:beta-eta-gamma-i}) noting that for the finitely many
propagating modes one has $\lambda_j \sim 1$ so that $|\widetilde{\lambda}_j | \sim 1$.
\end{proof}
\begin{numberedproof}{of Theorem~\ref{thm:stability-maxwell}}
The stability bound (\ref{eq:thm:stability-maxwell-1}) follows from
Lemmas~\ref{lemma:alpha-delta-zeta} and \ref{lemma:beta-eta-gamma} together with the
observation that the estimates (\ref{eq:maxwell-requirement}) imply Theorem~\ref{thm:stability-maxwell}. For the estimate (\ref{eq:thm:stability-maxwell-2}), 
we note that for the pair $(\bfE, \bfH) \in D(A^{\tmaxwell})$, we may
set $(\bff,\bfg):= (\curl \bfE - \bi \ome \bfH, \curl \bfH + \bi \ome \bfE) \in L^2(\Ome)$ and then apply (\ref{eq:thm:stability-maxwell-1}). 
\end{numberedproof}

% Numerical results and conclusions
\section{Numerical results and conclusions}
\label{sec:numerical}

To test the dependence of the ultraweak DPG Maxwell discretization on the waveguide length $L$ and the scaling constant $\beta$ in the test norm, we solve the linear time-harmonic Maxwell equations in a homogeneous rectangular waveguide with transverse domain $D = (0.0, 1.0) \times (0.0, 0.5)$. At $\Gammai$, the waveguide is excited with the lowest-order transverse electric (TE) mode by prescribing the corresponding tangential electric field. At the waveguide exit $\Gammao$, the $\dtn$ operator is approximated with an impedance boundary condition relating the tangential electric field $\bfE$ with the rotated tangential magnetic field $\bfn \times \bfH$ via the impedance constant for the propagating TE mode.
The first-order impedance boundary condition works well for the case of single-mode propagation and can be realized with the existing trace unknowns in the UW DPG method;
for details on implementing impedance boundary conditions in UW DPG, we refer to \cite{FE_Math_book}.
For the more general case of a multi-mode waveguide, we employ a perfectly matched layer (PML) to approximate the DtN operator.
For a discussion of PMLs with the DPG method, we refer to \cite{Vaziri_Keith_Demkowicz_18}; for an analysis of a stretched-coordinate PML for optical waveguide simulations with the full envelope approximation of UW DPG Maxwell, see \cite{Henneking_Grosek_Demkowicz_24}.

The cross-section of the rectangular waveguide is modeled with two hexahedral elements, which is justified by the simple transverse mode profile of the fundamental TE mode. In the longitudinal direction, we use a ``fixed discretization'' of two elements per wavelength; as the waveguide length $L$ increases, the number of elements per wavelength remains the same. The DPG discretization uses uniform polynomial order $p=5$ with enriched test functions of order $p + 1$. All of the computations were done in $hp$3D \cite{Henneking_Demkowicz_hpUserManual, Henneking_Demkowicz_book, Henneking_24_hp3d}.
For additional details about the implementation of the UW DPG Maxwell problem, we refer to \cite{Henneking_phd}.

\begin{figure}[tb]
	\centering
%	\begin{subfigure}{0.55\textwidth}
%		\includegraphics[width=\textwidth,trim={0 0 25pt 0}]
%		{p5error_beta_plot1.pdf}
%	\end{subfigure}
	\begin{subfigure}{0.49\textwidth}
		\includegraphics[width=\textwidth]
		{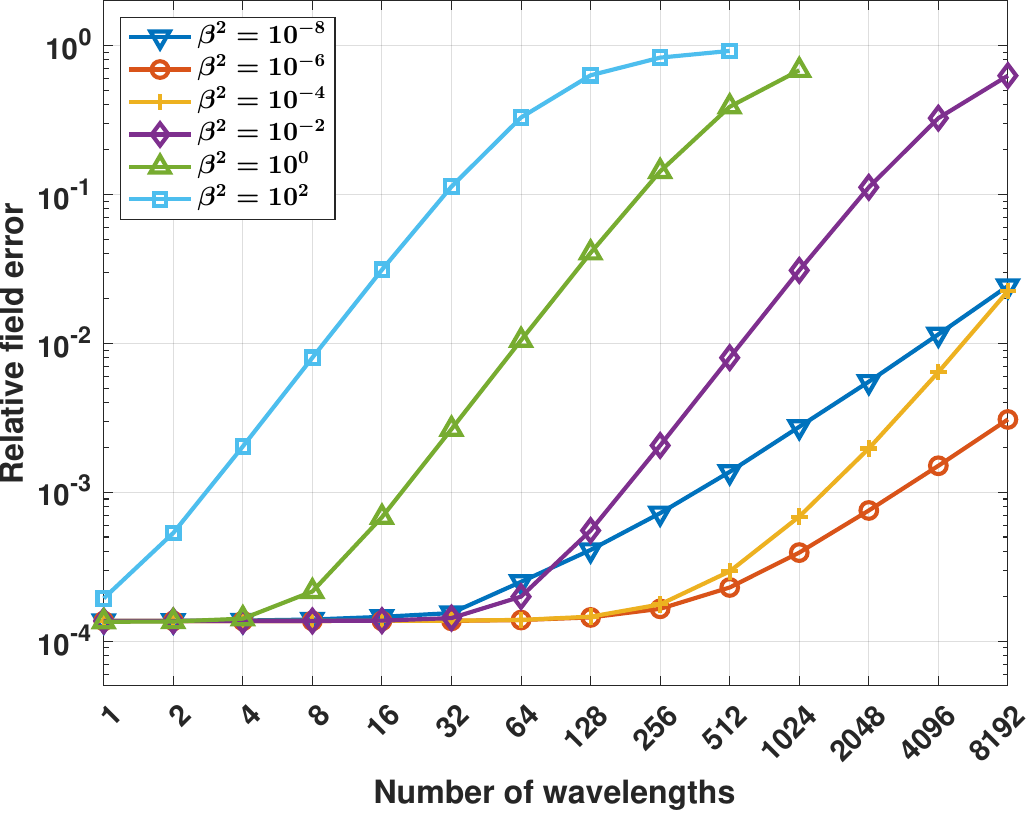}
	\end{subfigure}
	\begin{subfigure}{0.49\textwidth}
		\includegraphics[width=\textwidth]{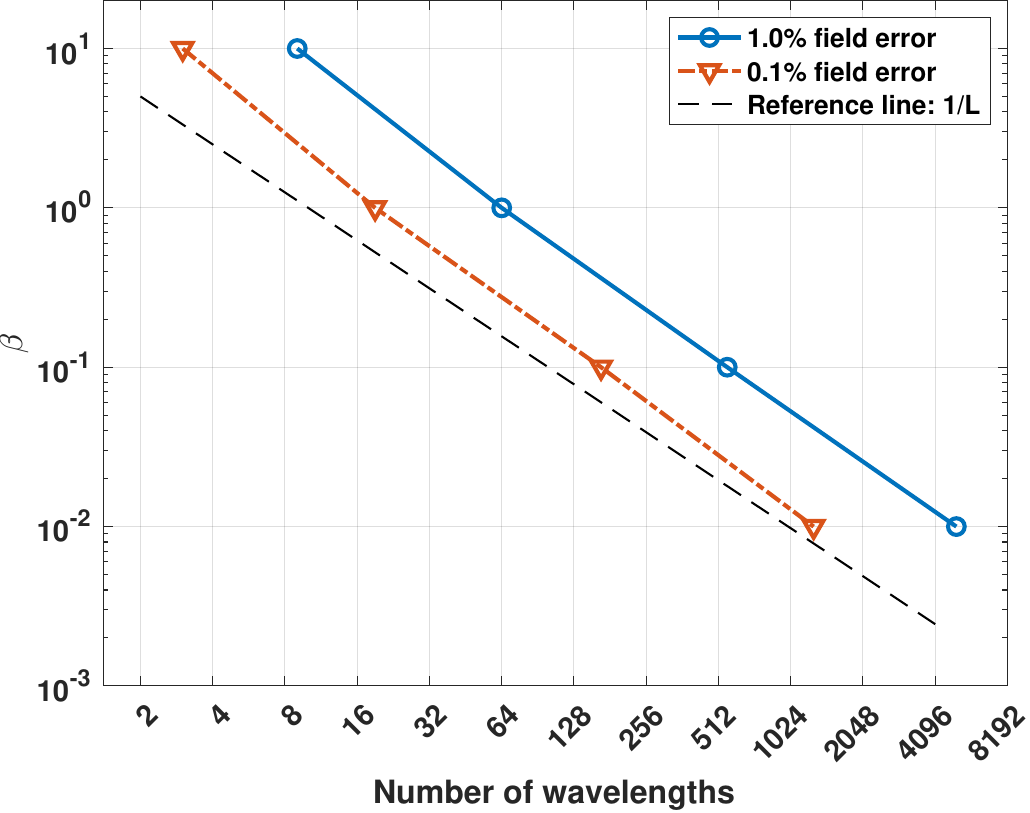}
	\end{subfigure}
	\caption{Propagation of the fundamental TE mode in a homogeneous 3D rectangular waveguide with uniformly fifth-order discretization using a fixed number of elements per wavelength. Choosing a small scaling constant $\beta$ in the DPG test norm improves the stability of the method significantly, but choosing $\beta$ too small results in round-off errors that adversely affect the solution. Left: Relative field error for different choices of $\beta$. Right: Dependence of $\beta$ on the waveguide length $L$ to counter stability loss and maintain a certain relative accuracy of the solution.}
	\label{fig:p5error_beta}
\end{figure}

We recall the scaled adjoint graph norm from Section~\ref{sec:model-problems}:
\[
	\| v \|^2_{H_{A^*}(\Omega_h)} =
	\| A_h^* v \|^2 +
	\beta^2 \| v \|^2 .
\]

The analysis of Lemma~\ref{lem:uw-inf-sup} and  Theorem~\ref{thm:stability-maxwell} suggests scaling $\beta$ with $L^{-1}$ to maintain stability of the method as the waveguide length $L$ increases. Figure~\ref{fig:p5error_beta} (left) shows the relative $L^2$-error of the electric field $\bfE$ for various choices of $\beta$ and increasing waveguide length $L$ (here expressed in terms of the total number of wavelengths). The numerical results confirm that the choice of $\beta$ indeed significantly affects the stability of the discretization.
Figure~\ref{fig:p5error_beta} (right) illustrates the dependence of $\beta$ on the waveguide length $L$ to counter the loss of stability and maintain a certain relative accuracy of the solution.
Specifically, the plot shows the maximum waveguide length $L$ (expressed in terms of number of wavelengths) that can be solved with a given value of $\beta$ while maintaining a relative error of the numerical solution of less than $1.0\%$ or $0.1\%$.
For $\beta \in [10^{-2}, 10]$, these curves follow roughly the expected dependence of $L^{-1}$ plotted as a reference line. 
Note that the stability analysis of the preceding sections, which predicts this $L^{-1}$ dependence, considered the \emph{ideal} DPG method, whereas the numerical results presented here are computed with the \emph{practical} DPG method which only approximates the optimal test functions.
For small values of $\beta$, rounding errors become dominant in the numerical computations, which limits the extent to which stability loss can be countered in practice.
In our experiments, round-off errors became dominant for $\beta < 10^{-3}$, cf.~Figure~\ref{fig:p5error_beta} (left).
Ultimately, the numerical experiments substantiate the theoretical analysis that by choosing $\beta$ sufficiently small, UW DPG can compensate (to a certain extent) the loss of stability due to the $L^{-1}$ dependence of the boundedness-below constant.

\subsection*{Conclusions. Non-homogeneous waveguide}
The convergence analysis for the DPG method for the modified Maxwell model resulting from a full envelope ansatz led us to the stability analysis for EM waveguides. We started with a simpler, acoustic waveguide, and then proceeded to the homogeneous Maxwell model. For both models, we have obtained the same result: the stability constant depends linearly on the length of the waveguide, i.e.,
\[
	\Vert \bfE \Vert + \Vert \bfH \Vert  \leq C L \left(\Vert \bfnab \times \bfE - \bi \omega \bfH \Vert  +
	\Vert \bfnab \times \bfH + \bi \omega \bfE \Vert  \right) \, , \qquad C > 0 \, .
\]
For the non-homogeneous optical waveguide with  $\mu=1$, $\epsilon = 1 + \delta \epsilon$, the triangle inequality implies:
\begin{align*}
	\Vert \bfE \Vert  + \Vert \bfH \Vert
	& \leq C L \left(\Vert \bfnab \times \bfE - \bi \omega \bfH \Vert + 
	\Vert \bfnab \times \bfH + \bi \omega (1 + \delta \epsilon - \delta \epsilon) \bfE \Vert \right) \\
	& \leq C L \left(\Vert \bfnab \times \bfE - \bi \omega \bfH \Vert +
	\Vert \bfnab \times \bfH + \bi \omega (1 + \delta \epsilon ) \bfE \Vert + 
	\omega \Vert \delta \epsilon \Vert_{L^\infty} \Vert \bfE \Vert \right) \, .
\end{align*}
Consequently,
\[
	(1 - CL \omega \Vert \delta \epsilon \Vert_{L^\infty (\Omega)} ) (\Vert \bfE \Vert + \Vert \bfH \Vert )
	\leq C L \left(\Vert \bfnab \times \bfE - \bi \omega \bfH \Vert +
	\Vert \bfnab \times \bfH + \bi \omega (1 + \delta \epsilon ) \bfE \Vert \right) \, ,
\]
which proves that, for a sufficiently small perturbation $\delta \epsilon$, the operator remains bounded below. However, the information about the linear dependence of the stability constant upon $L$ is lost.\footnote{$CL / (1 - CL \omega\,  \|\delta \epsilon\|_{L^\infty}) \approx CL ( 1 + CL \omega \, \|\delta \epsilon\|_{L^\infty})$.} In fact, the larger $L$, the smaller $\delta \epsilon$  has to be. In the second part of this work \cite{Demkowicz_Melenk_Badger_Henneking_24}, we generalize our results to the case of non-homogeneous waveguides using perturbation theory.

%% ===== Bibliography ===== %
%\printbibliography[heading=bibintoc]
%\setcounter{section}{0}
% ===== ========= ===== %

% Appendices
\appendix
%-------------------------------------------------
\section{Proof of Lemma~\ref{lemma:1D-inf-sup}}
\label{appendix:proof-of-lemma-1D-inf-sup}
%-------------------------------------------------
For the convenience of the reader and to clarify some arguments from \cite{MST20}  we provide some details of the proof of Lemma~\ref{lemma:1D-inf-sup}.
To simply the notation, we write in the present section
\begin{align*}
a^{1D}_{\kappa} (u,v) & := (u^\prime,v^\prime)_{L^2(0,1)} + \kappa^2 (u,v)_{L^2(0,1)} + \kappa u(1) \overline{v}(1), \\
\|u\|^2_{1,|\kappa|}& := \|u^\prime\|^2_{L^2(0,1)} + |\kappa|^2 \|u\|^2_{L^2(0,1)},  \\
\|u\|^2_{1,\kappa,\sim}
& := \frac{\Re \kappa}{|\kappa|} \|u\|^2_{1,|\kappa|} + |\kappa| |u(1)|^2 
\!=\! \frac{\Re \kappa}{|\kappa|} \|u^\prime\|^2_{L^2(0,1)} + \Re \kappa |\kappa| \|u\|^2_{L^2(0,1)} + |\kappa| |u(1)|^2. 
\end{align*}
\begin{lemma}[\protect{\cite[Lem.~{4.1}]{MST20}}]
\label{lemma:MST-lemma-4.1}
For any $\kappa \in \JMM{\doubleIC}$ with $\Re \kappa > 0$, the sesquilinear form $a^{1D}_\kappa$ satisfies for any subspace $\{0\} \ne V \subset H^1(0,1)$
\begin{align*}
\inf_{u \in V \setminus \{0\}} \sup_{v \in V \setminus \{0\} } \frac{\Re a^{1D}_\kappa (u,v)}{\|u\|_{1,|\kappa|} \|v\|_{1,|\kappa|}}
\ge \frac{\Re \kappa}{|\kappa|},
\qquad 
\inf_{u \in V \setminus \{0\}} \sup_{v \in V \setminus \{0\} } \frac{\Re a^{1D}_\kappa (u,v)}{\|u\|_{1,\kappa,\sim} \|v\|_{1,\kappa,\sim}}
\ge 1. 
\end{align*}
\end{lemma}
\begin{proof}
Given $u \in V$ take $v:= \frac{\kappa}{|\kappa|} u \in V$ and compute
\begin{align*}
\Re a^{1D}_\kappa(u, v) & =
\Re \left( \frac{\overline{\kappa}}{|\kappa|} \|u^\prime\|^2_{L^2(0,1)} + \kappa |\kappa| \|u\|^2_{L^2(0,1)}  + |\kappa| |u(1)|^2 \right) \\
& \ge \frac{\Re \kappa}{|\kappa|} \|u\|^2_{1,|\kappa|} =
\frac{\Re \kappa}{|\kappa|} \|u\|_{1,|\kappa|} \|v\|_{1,|\kappa|}.
\end{align*}
The same choice $v$ also shows the inf-sup condition for the norm $\|\cdot\|_{1,\kappa,\sim}$. 
\end{proof}
The following Lemma~\ref{lemma:MST-lemma-4.2} provides a corrected proof (restricted to the presently considered 1D situation) of \cite[Lemma~{4.2}]{MST20}. 
%The statement of \cite[Lem.~{4.2}]{MST20} is, however, correct.  
\begin{lemma}[\protect{\cite[Lem.~{4.2}]{MST20}}]
\label{lemma:MST-lemma-4.2}
Let $V  \in \{H^1(0,1), H^1_{(0}(0,1)\}$. 
Let $\kappa_0 > 0$. 
There is $C > 0$ such that for all $\kappa \in \doubleIC_{\ge 0}$ with $|\kappa| \ge \kappa_0$ and all
$f \in L^2(0,1)$, $g \in \doubleIC$ the
solution $u \in V$ of
\begin{equation}
\label{eq:lemma:MST-lemma-4.2-weak-formulation}
a^{1D}_{\kappa}(u,v) = (f,v)_{L^2(0,1)} + g \overline{v}(1) \quad \forall v \in V
\end{equation}
satisfies the stability bound
\begin{equation}
\label{eq:lemma:MST-lemma-4.2-final}
%\|u\|_{1,|\kappa|} \leq C \left[ \frac{1}{1 + \Re \kappa} \|f\|_{L^2(0,1)} + \frac{1}{\sqrt{1+\Re \kappa}} |g|\right].
\|u\|_{1,|\kappa|} \leq C \left[ \frac{1}{1+\Re \kappa} \|f\|_{L^2(0,1)} +  \frac{1}{\sqrt{1+\Re \kappa}} |g|\right].
\end{equation}
\end{lemma}
\begin{proof}
This is taken from \cite[Lem.~{4.2}]{MST20}, where the multidimensional case is covered. We repeat the arguments, which are
based on the multiplier technique worked out in detail in
\cite[Lem.~{4.2}]{MST_arxiv}, \cite[Cor.~{2.11}]{GaGrSp:15}, \cite[Prop.~{8.1.4}]{MelenkDiss}.
We assume $\Re \kappa > 0$ as the case $\Re \kappa = 0$ has been treated in, e.g., \cite[Prop.~{8.1.4}]{MelenkDiss} and could, alternatively, 
be obtained from the arguments below in the limit $\Re \kappa \rightarrow 0$. 
%By the inf-sup condition satisfied by $a^{1D}_\kappa$ (cf.\ Lemma~\ref{lemma:MST-lemma-4.1}) 
%the solution $u$ satisfies 
%\begin{align}
%\label{eq:lemma:MST-lemma-4.2-5}
%\|u\|_{1,|\kappa|} &\leq \frac{|\kappa|}{\Re \kappa} \sup_{v \in V} \frac{\left| (f,v)_{L^2} + g \overline{v}(1)\right|}{\|v\|_{1,|\kappa|} } 
%\leq \frac{|\kappa|}{\Re \kappa} \left[ |\kappa|^{-1} \|f\|_{L^2(0,1)} + \frac{C}{|\kappa|^{1/2}} |g| \right], 
%\end{align}
%where we additionally used the multiplicative trace inequality $|v(1)| \leq C \|v\|_{L^2} \|v\|_{H^1}$.

The second inf-sup condition in Lemma~\ref{lemma:MST-lemma-4.1} implies 
\begin{align*}
\|u\|_{1,\kappa,\sim} \leq \sup_{v \in V} \frac{\left| (f,v)_{L^2} + g \overline{v}(1)\right|}{\|v\|_{1,\kappa,\sim} } 
\leq \frac{\|f\|_{L^2(0,1)}}{\sqrt{\Re \kappa |\kappa|}} + \frac{|g|}{\sqrt{|\kappa|}} 
\end{align*}
so that we get 
\begin{align}
\label{eq:lemma:MST-lemma-4.2-21}
\|u^\prime\|_{L^2(0,1)} + |\kappa| \|u\|_{L^2(0,1)} + \frac{|\kappa|}{\sqrt{\Re \kappa}} |u(1)| 
& \lesssim \frac{\|f\|_{L^2(0,1)}}{\Re \kappa} + \frac{|g|}{\sqrt{\Re \kappa}}. 
\end{align} 
\emph{Step 1 (the case $\Re \kappa \ge 1$):}
For $\Re \kappa \ge 1$, (\ref{eq:lemma:MST-lemma-4.2-21}) immediately implies 
(\ref{eq:lemma:MST-lemma-4.2-final}). 
%\begin{align*}
%\|u\|^2_{1,|\kappa|} &\leq
%C_\beta \left[ \|f\|_{L^2(0,1)} \|u\|_{L^2(0,1)} + |g| |u(1)|\right]
%\leq C_\beta \left[ \frac{1}{|\kappa|} \|f\|_{L^2(0,1)} + \frac{1}{\sqrt{|\kappa|}} |g| \right] \|u\|_{1,|\kappa|}.
%\end{align*}

For the case $\Re \kappa \leq 1$ we fix $\beta > 1$ and 
distinguish between the cases $|\Im \kappa| \leq \beta \Re \kappa$ and $|\Im \kappa| \ge \beta \Re \kappa$. 

\emph{Step 2 (the case $\Re \kappa \leq 1$ with $|\Im \kappa | \leq \beta \Re \kappa$):}
{}From $\kappa_0^2 \leq |\kappa|^2 \leq (1 + \beta^2) (\Re \kappa)^2$ we infer $\kappa_0 (1+\beta^2)^{-1/2} \leq \Re \kappa \leq 1$
so that 
(\ref{eq:lemma:MST-lemma-4.2-21}) again implies (\ref{eq:lemma:MST-lemma-4.2-final}). 

\emph{Step 3 (the case $\Re \kappa \leq 1$ with $|\Im \kappa | \ge \beta \Re \kappa$):} We have 
\begin{align}
\label{eq:lemma:MST-lemma-4.2-25}
|\Im \kappa| &\leq |\kappa| \leq \sqrt{1+\beta^{-2}} |\Im \kappa|, \\
\label{eq:lemma:MST-lemma-4.2-27}
 -\Re \kappa^2 & = -(\Re \kappa)^2 + (\Im \kappa)^2  \ge \frac{1-\beta^{-2}}{1+\beta^{-2}} |\kappa|^2 \sim |\kappa|^2. 
\end{align}
%
%Taking as the test function $v = u$ gives by taking the real part
%\begin{align}
%\label{eq:lemma:MST-lemma-4.2-10}
%\|u^\prime\|^2_{L^2(0,1)} + \Re \kappa^2 \|u\|^2_{L^2(0,1)} + \Re \kappa |u(1)|^2 &\lesssim \|f\|_{L^2(0,1)} \|u\|_{L^2(0,1)} + |g| |u(1)|.
%\end{align}
Taking as the test function $v = (\Im \kappa) u$ in (\ref{eq:lemma:MST-lemma-4.2-weak-formulation}) and taking the imaginary part gives 
\begin{align}
\label{eq:lemma:MST-lemma-4.2-20}
2 \underbrace{|\Im \kappa| \Re \kappa}_{\ge 0} \|u\|^2_{L^2(0,1)} + |\Im \kappa| |u(1)|^2 &\lesssim \|f\|_{L^2(0,1)} \|u\|_{L^2(0,1)} + |g| |u(1)|.
\end{align}
The multiplier technique amounts to taking as a test function $v = x u^\prime$. Using %the relations
\begin{align*}
\Re u^\prime (x \overline{u}^\prime)^\prime & = \frac{1}{2} x \left(|u^\prime|^2\right)^\prime + |u^\prime|^2 ,
&
\Re u x \overline{u}^\prime & = \frac{1}{2} x \left(|u|^2\right)^\prime,
%& \Im u x \overline{u} & = 0, 
\end{align*}
we get  \JMM{with integration by parts} 
\begin{align*}
& \Re a^{1D}_{\kappa}(u,x u^\prime)  = \Re (u^\prime, (x u^\prime)^\prime)_{L^2(0,1)} + \Re \left(\kappa^2 (u, x u^{\prime})_{L^2(0,1)}\right) +
\Re \left(\kappa u(1) \overline{u}^\prime(1)\right)  = \\
& \|u^\prime\|^2_{L^2(0,1)} - \frac{1}{2} \|u^\prime\|^2_{L^2(0,1)} + \frac{1}{2}| u^\prime(1)|^2
+ \frac{1}{2} \left(\Re \kappa^2\right) \left( |u(1)|^2 - \|u\|^2_{L^2(0,1)}\right)  \\
& \qquad -(\Im \kappa^2) \Im (u, x u^\prime)_{L^2(0,1)} + \Re \left(\kappa u(1) \overline{u}^\prime(1)\right).
\end{align*}
Hence, by inserting this in the real part of the weak formulation (\ref{eq:lemma:MST-lemma-4.2-weak-formulation}) for $u$, we get 
\begin{align}
\nonumber 
\|u^\prime\|^2_{L^2(0,1)} &- \Re \kappa^2  \|u\|^2_{L^2(0,1)}  + |u^\prime(1)|^2 
\leq
2 \Bigl( \|f\|_{L^2(0,1)} \|x u^\prime\|_{L^2(0,1)} + |g| |u^\prime(1)| \\
\label{eq:lemma:MST-lemma-4.2-50}
&  +\JMM{|\kappa|}  |u(1)| |u^\prime(1)| 
- \frac{1}{2} \Re \kappa^2 |u(1)|^2 + 2 |\Im \kappa| \Re \kappa \|u\|_{L^2(0,1)}\|x u^\prime\|_{L^2(0,1)} 
\Bigr). 
\end{align} 
Applying 
Young's inequality in
(\ref{eq:lemma:MST-lemma-4.2-20})
and the observation $|\kappa| \sim |\Im \kappa|$ from 
(\ref{eq:lemma:MST-lemma-4.2-25}) give
\begin{align}
\label{eq:lemma:MST-lemma-4.2-1000} 
|\kappa|^2 |u(1)|^2 \lesssim \|f\|_{L^2(0,1)} |\kappa| \|u\|_{L^2(0,1)} + |g|^2.
\end{align}
Inserting (\ref{eq:lemma:MST-lemma-4.2-1000})
in (\ref{eq:lemma:MST-lemma-4.2-50}), 
observing (\ref{eq:lemma:MST-lemma-4.2-27}) and 
$\Im \kappa \sim |\kappa|$, and Young's inequality give 
\begin{align}
\label{eq:lemma:MST-lemma-4.2-100} 
\|u^\prime\|^2_{L^2(0,1)} & + |\kappa|^2  \|u\|^2_{L^2(0,1)}  + |u^\prime(1)|^2  \\
\nonumber 
& \lesssim \|f\|^2_{L^2(0,1)} + |g|^2 + |\Im \kappa| \Re \kappa \|u\|_{L^2(0,1)} \|u^\prime\|_{L^2(0,1)}. 
\end{align}
Next, we distinguish the cases $(f,g) = (f,0)$ and $(f,g) = (0,g)$ and denote the corresponding 
solutions by $u_f$ and $u_g$. For $u_f$, we estimate 
\begin{align}
\label{eq:lemma:MST-lemma-4.2-150} 
|\Im \kappa| \Re \kappa \|u_f\|_{L^2(0,1)} \|u_f^\prime\|_{L^2(0,1)}  
\stackrel{(\ref{eq:lemma:MST-lemma-4.2-21})}{\lesssim} |\Im \kappa| \|u_f\|_{L^2(0,1)}  \|f\|_{L^2(0,1)}. 
\end{align}
Inserting (\ref{eq:lemma:MST-lemma-4.2-150}) in (\ref{eq:lemma:MST-lemma-4.2-100})
and using Young's inequality again to kick the term $|\Im \kappa| \|u_f\|_{L^2(0,1)}$ back to the left-hand side
of (\ref{eq:lemma:MST-lemma-4.2-100}) gives 
\begin{align}
\label{eq:lemma:MST-lemma-4.2-200}
\|u_f^\prime\|^2_{L^2(0,1)} + |\kappa|^2  \|u_f\|^2_{L^2(0,1)}  + |u_f^\prime(1)|^2 
\lesssim \|f\|^2_{L^2(0,1)}.  
\end{align}
Turning to $u_g$, we estimate
\begin{align*}
& |\Im \kappa| \Re \kappa \|u_g\|_{L^2(0,1)} \|u_g^\prime\|_{L^2(0,1)}  = 
\sqrt{|\Im \kappa| \Re \kappa} 
\sqrt{|\Im \kappa| \Re \kappa} \|u_g\|_{L^2(0,1)} \|u^\prime_g\|_{L^2(0,1)}   \\
& \stackrel{ (\ref{eq:lemma:MST-lemma-4.2-20})}{\lesssim} 
\sqrt{|\Im \kappa| \Re \kappa} \sqrt{|g| |u_g(1)|} \|u^\prime_g\|_{L^2(0,1)} 
\stackrel{(\ref{eq:lemma:MST-lemma-4.2-21})}{\lesssim} 
\sqrt{|\Im \kappa| \Re \kappa} \frac{|g|}{\sqrt{|\kappa|}} \|u^\prime_g\|_{L^2(0,1)} 
\stackrel{(\ref{eq:lemma:MST-lemma-4.2-21})}{\lesssim} 
|g|^2. 
\end{align*}
Inserting this in 
(\ref{eq:lemma:MST-lemma-4.2-100}) yields 
\begin{align}
\label{eq:lemma:MST-lemma-4.2-300}
\|u_g^\prime\|^2_{L^2(0,1)} + |\kappa|^2  \|u_g\|^2_{L^2(0,1)}  + |u_g^\prime(1)|^2 
\lesssim |g|^2.  
\end{align}
Since $u = u_f + u_g$, the combination of (\ref{eq:lemma:MST-lemma-4.2-200}) and (\ref{eq:lemma:MST-lemma-4.2-300})
yields the desired estimate. 
\end{proof}
We are now in position to prove Lemma~\ref{lemma:1D-inf-sup}:

\begin{numberedproof}{of Lemma~\ref{lemma:1D-inf-sup}}
We follow \cite[Thm.~{4.3}]{MST20}. For $\Re \kappa \ge 1/\sqrt{2}$ we have by Lemma~\ref{lemma:MST-lemma-4.1}
\begin{align*}
\inf_{u \in V} \sup_{v \in V} \frac{\Re a^{1D}_\kappa(u,v)}{\|u\|_{1,|\kappa|} \|v\|_{1,|\kappa|}} \ge
\frac{\Re \kappa}{|\kappa|}  = \frac{\Re \kappa}{\sqrt{(\Re \kappa)^2 + (\Im \kappa)^2}} \ge
\frac{\Re \kappa }{\Re \kappa + |\Im \kappa|} \ge \frac{1}{1 + c \frac{|\Im \kappa|}{1 + \Re \kappa}}
\end{align*}
with $c = 1 + \sqrt{2}$. 
Let $\Re \kappa \leq 1/\sqrt{2}$. Given $u \in V$, we seek $v$ in the form $v = u + z$ with $z$ solving
\begin{equation*}
a^{1D}_{\overline{\kappa}}(z,w) = \alpha^2 (u,w)_{L^2(0,1)} \quad \forall w \in V,
\qquad \alpha^2 := |\kappa|^2 - {\kappa}^2  = - 2 i (\Im \kappa) \kappa.
\end{equation*}
By Lemma~\ref{lemma:MST-lemma-4.2}, we get
\begin{equation*}
\|z\|_{1,|\kappa|} \leq C |\alpha|^2 \|u\|_{L^2(0,1)} \leq C |\Im \kappa| |\kappa| \|u\|_{L^2(0,1)}
\end{equation*}
and note
\begin{align*}
\Re a^{1D}_{\kappa}(u,u+z) & = \|u^\prime\|^2_{L^2(0,1)} + \Re \kappa^2 \|u\|^2_{L^2(0,1)} + \Re \kappa |u(1)|^2 + \Re a^{1D}_\kappa(u,z)  \\
                      &=  \|u^\prime\|^2_{L^2(0,1)} + \Re \kappa^2 \|u\|^2_{L^2(0,1)} + \Re \kappa |u(1)|^2 + \Re a^{1D}_{\overline{\kappa}}(z,u)   \\
                      &=  \|u^\prime\|^2_{L^2(0,1)} + \Re \kappa^2 \|u\|^2_{L^2(0,1)} + \Re \kappa |u(1)|^2 + \Re \alpha^2 \|u\|^2_{L^2(0,1)}\\
                      &=  \|u\|^2_{1,|\kappa|} +  \underbrace{\Re \kappa}_{\ge 0} |u(1)|^2 \ge \|u\|^2_{1,|\kappa|}
\end{align*}
as well as
\begin{equation*}
\|u + z\|_{1,|\kappa|} \leq \|u\|_{1,|\kappa|} + \|z\|_{1,|\kappa|} %\leq (1 + C |\Im \kappa|) |\kappa| \|u\|_{L^2(0,1)}
\leq (1 + C |\Im \kappa|) \|u\|_{1,|\kappa|}. 
\end{equation*}
Hence, we arrive at
\begin{align*}
\Re a^{1D}_{\kappa}(u,v) =
\Re a^{1D}_{\kappa}(u,u+z)  \ge \|u\|^2_{1,|\kappa|}
\ge  \frac{\|u\|_{1,|\kappa|} \|u + z\|_{1,|\kappa|}}  {1 + C |\Im \kappa|}
\ge  \frac{\|u\|_{1,|\kappa|} \|u + z\|_{1,|\kappa|}}  {1 + C \frac{|\Im \kappa|}{1+\Re \kappa}}.
\end{align*}
This concludes the proof. 
\end{numberedproof}
\section{Reduction of the first-order system to second order equations for Maxwell's equations}
%------------------------------------------
%------------------------------------------
\subsection{The system for \texorpdfstring{$\bs{\alpha_i}$, $\bs{\delta_i}$, $\bs{\zeta_i}$}{alpha-i, delta-i, zeta-i}}
\label{appendix:alpha-delta-zeta}
%------------------------------------------
We consider (\ref{eq:maxwell-1}), (\ref{eq:maxwell-3}), (\ref{eq:maxwell-4}),
viz., (skipping the subscript $i$)
\begin{align}
\label{eq:maxwell-1-hinten}
\alpha^\prime - \bi \ome \delta & = f_1 , \\
\label{eq:maxwell-3-hinten}
\alpha - \bi \ome \mu^{-1} \zeta & = f_3 , \\
\label{eq:maxwell-4-hinten}
- \delta^\prime + \zeta + \bi \ome \alpha & = g_1 .
\end{align}
Multiplying (\ref{eq:maxwell-3-hinten}) with $\mu$ and (\ref{eq:maxwell-4-hinten}) with $\bi\ome$ and adding the equations, 
we eliminate $\zeta$ and obtain 
\begin{align}
\label{eq:maxwell-1-hinten-1}
\alpha^\prime - \bi \ome \delta & = f_1 , \\
\label{eq:maxwell-3-hinten-1}
- \bi \ome \delta^\prime + \underbrace{(\mu - \ome^2)}_{=:\widetilde{\mu}^2} \alpha & = \bi \ome g_1 + \mu f_3 .
\end{align}
On the interval $(L,\infty)$, this system corresponds to the right-hand sides
$f_1 = f_3 = g_1 = 0$. By eliminating $\delta$ we get
$$
\alpha^{\prime\prime} - \widetilde{\mu}^2 \alpha = 0
$$
with fundamental system $e^{\pm \widetilde{\mu} z}$. The requirement
of outgoing waves implies that we seek solutions of the homogeneous
system as multiples of $e^{- \widetilde{\mu} z}$, which provides us
with the boundary condition at $z = L$
\begin{equation}
\label{eq:impedance-bc-alpha-hinten}
\alpha^\prime(L) + \widetilde{\mu} \alpha(L) = 0.
\end{equation}
In view of (\ref{eq:maxwell-1-hinten}) on $(L,\infty)$ with $f_1 = 0$
we obtain
\begin{equation}
\label{eq:alpha-delta-bc}
0 \stackrel{(\ref{eq:maxwell-1-hinten})}{=}
\alpha^\prime(L) - \bi \ome \delta(L)
\stackrel{(\ref{eq:impedance-bc-alpha-hinten})}{=} - \widetilde \mu \alpha(L) - \bi \ome \delta(L).
\end{equation}
We also have the boundary condition
\begin{equation}
\label{eq:initial-condition-alpha-hinten}
\alpha(0) = 0. 
\end{equation}
We now turn to the weak formulation.
For $v$ with $v(0) = 0$ we get by
multiplying (\ref{eq:maxwell-1-hinten-1}) by $v^\prime$ and integrating
over $(0,L)$
and by multiplying (\ref{eq:maxwell-3-hinten-1}) by $v$ and integrating
over $(0,L)$
\begin{align*}
(\alpha^\prime, v^\prime)_{L^2(0,L)} - \bi \ome (\delta,v^\prime)_{L^2(0,L)}
& = (f_1,v^\prime)_{L^2(0,L)}, \\
- \bi \ome (\delta^\prime,v)_{L^2(0,L)} + \widetilde{\mu}^2 (\alpha,v)_{L^2(0,L)} & = \bi\ome (g_1,v)_{L^2(0,L)} + \mu (f_3,v)_{L^2(0,L)}.
\end{align*}
Integrating the term $(\delta^\prime,v)_{L^2(0,L)}$ by parts and
using $v(0) = 0$ as well as $\bi\ome \delta(L) = -\widetilde{\mu} \alpha(L)$
by (\ref{eq:alpha-delta-bc})
gives
\begin{align*}
(\alpha^\prime, v^\prime)_{L^2(0,L)} - \bi \ome (\delta,v^\prime)_{L^2(0,L)}
& = (f_1,v^\prime)_{L^2(0,L)}, \\
\bi \ome (\delta,v^\prime )_{L^2(0,L)} + \widetilde{\mu} \alpha(L) \overline{v}(L) + \widetilde{\mu}^2 (\alpha,v)_{L^2(0,L)} & = \bi\ome (g_1,v)_{L^2(0,L)} + \mu (f_3,v)_{L^2(0,L)}.
\end{align*}
Adding these two equations yields
\begin{multline}
(\alpha^\prime, v^\prime)_{L^2(0,L)} + \widetilde{\mu}^2 (\alpha,v)_{L^2(0,L)}
+\widetilde{\mu} \alpha(L) \overline{v}(L) \\
 = (f_1,v^\prime)_{L^2(0,L)} +
  \bi\ome (g_1,v)_{L^2(0,L)} + \mu (f_3,v)_{L^2(0,L)}.
\end{multline}
%------------------------------------------
\subsection{The system for \texorpdfstring{$\bs{\beta_j}$, $\bs{\gamma_j}$, $\bs{\eta_j}$}{beta-j, gamma-j, eta-j}}
\label{appendix:beta-eta-gamma}
%------------------------------------------
We consider (\ref{eq:maxwell-2}), (\ref{eq:maxwell-5}), (\ref{eq:maxwell-6}),
viz., (skipping the subscript $j$)
\begin{align}
\label{eq:maxwell-2-hinten}
-\beta^\prime + \gamma - \bi \ome \eta& = f_2,  \\
\label{eq:maxwell-5-hinten}
\eta^\prime+\bi \ome \beta& = g_2,  \\
\label{eq:maxwell-6-hinten}
\eta + \bi \ome \lambda^{-1} \gamma & = g_3. 
\end{align}
Multiplying (\ref{eq:maxwell-2-hinten}) by $\bi \ome$, (\ref{eq:maxwell-6-hinten}) by $\lambda$ and subtracting the equations, we get 
%Eliminating $\gamma$ in (\ref{eq:maxwell-2-hinten}) and
%multiplying (\ref{eq:maxwell-5-hinten}) by
%$\widetilde{\lambda}^2:= {\lambda-  \omega^2}$ yields
\begin{align}
\label{eq:maxwell-2-hinten-1}
- \bi\ome \beta^\prime - \widetilde{\lambda}^2 \eta  & = \bi \ome f_2  - \lambda g_3 , \\
\label{eq:maxwell-5-hinten-1}
\widetilde{\lambda}^2 \eta^\prime + \bi \ome \widetilde{\lambda}^2 \beta & = \widetilde{\lambda}^2 g_2 ,
\end{align}
where we also multiplied 
(\ref{eq:maxwell-5-hinten}) by $\widetilde{\lambda}^2:= {\lambda-  \omega^2}$.  
On $(L,\infty)$, this system corresponds to the right-hand sides
$f_2 = g_2 = g_3 = 0$. By eliminating $\eta$ we get
$$
-\beta^{\prime\prime} + \widetilde{\lambda }^2 \beta = 0
$$
with fundamental system $e^{\pm \widetilde{\lambda} z}$. The requirement
of outgoing waves implies that we seek solutions of the homogeneous
system as multiples of $e^{- \widetilde{\lambda} z}$, which provides us
with the boundary condition at $z = L$
\begin{equation}
\label{eq:impedance-bc-beta-hinten}
\beta^\prime(L) + \widetilde{\lambda} \beta(L) = 0.
\end{equation}
In view of (\ref{eq:maxwell-2-hinten-1}) on $(L,\infty)$ with $f_2= g_3= 0$
we obtain
\begin{equation}
\label{eq:beta-eta-bc}
0 \stackrel{(\ref{eq:maxwell-2-hinten-1})}{=}
- \bi\ome \beta^\prime(L) - \widetilde{\lambda}^2  \eta(L)
\stackrel{(\ref{eq:impedance-bc-beta-hinten})}{=} \bi \ome \widetilde \lambda \beta(L) - \widetilde{\lambda}^2 \eta(L).
\end{equation}
We also have the boundary condition
\begin{equation}
\label{eq:initial-condition-beta-hinten}
\beta(0) = 0 .
\end{equation}
We now turn to the weak formulation.
For $v$ with $v(0) = 0$ we get by
multiplying (\ref{eq:maxwell-2-hinten-1}) by $v^\prime$ and integrating
over $(0,L)$
and by multiplying (\ref{eq:maxwell-5-hinten-1}) by $v$ and integrating
over $(0,L)$
\begin{align}
- \bi \ome (\beta^\prime, v^\prime)_{L^2(0,L)} - \widetilde{\lambda}^2 (\eta,v^\prime)_{L^2(0,L)}
& = \bi \ome (f_2,v^\prime)_{L^2(0,L)} - \lambda (g_3,v^\prime), \\
 \widetilde{\lambda}^2 (\eta^\prime,v)_{L^2(0,L)} + \bi \ome \widetilde{\lambda}^2 (\beta,v)_{L^2(0,L)} & = \widetilde{\lambda}^2 (g_2,v)_{L^2(0,L)}.
\end{align}
Integrating the term $(\eta^\prime,v)_{L^2(0,L)}$ by parts and
using $v(0) = 0$ as well as $\widetilde{\lambda}^2 \eta(L) =  \bi \ome \widetilde{\lambda} \beta(L)$
by (\ref{eq:beta-eta-bc})
gives
\begin{align*}
-\bi \ome (\beta^\prime, v^\prime)_{L^2(0,L)} - \widetilde{\lambda}^2 (\eta,v^\prime)_{L^2(0,L)}
& = \bi \ome (f_2,v^\prime)_{L^2(0,L)} - \lambda (g_3,v^\prime), \\
 - \widetilde{\lambda}^2 (\eta,v^\prime)_{L^2(0,L)} + \bi\ome \widetilde{\lambda} \beta(L) \overline{v}(L)
+ \bi \ome \widetilde{\lambda}^2 (\beta,v)_{L^2(0,L)} & = \widetilde{\lambda}^2 (g_2,v)_{L^2(0,L)}.
\end{align*}
Subtracting these two equations  and subsequently dividing by $\bi \omega$ yields
%\begin{align*}
%-\bi \ome (\beta^\prime, v^\prime)_{L^2(0,L)} - \bi \ome \widetilde{\lambda}^2 (\beta,v)_{L^2(0,L)} - \bi \ome \widetilde{\lambda} \beta(L) \overline{v}(L)
%& = \bi \ome (f_2,v^\prime)_{L^2(0,L)} - \lambda (g_3,v^\prime)
%- \widetilde{\lambda}^2 (g_2,v)_{L^2(0,L)},
%\end{align*}
%i.e.,
\begin{align*}
\begin{aligned}
(\beta^\prime, v^\prime)_{L^2(0,L)} + \widetilde{\lambda}^2 (\beta,v)_{L^2(0,L)} + & \widetilde{\lambda} \beta(L) \overline{v}(L) \\
 & = - (f_2,v^\prime)_{L^2(0,L)} + \frac{\lambda}{\bi\ome} (g_3,v^\prime) + \widetilde{\lambda}^2 \frac{1}{\bi\ome}(g_2,v)_{L^2(0,L)} .
\end{aligned}
\end{align*}
%------------------------------------------------------------------

%-------------------------------------------------------------------------
\section{\texorpdfstring{$\dtn$}{DtN} for the Maxwell problem in \texorpdfstring{$\Omega$}{Omega}}
%-------------------------------------------------------------------------
Recall $\Gammam:= \partial\Omega \setminus \overline{\Gammao}$ and 
$\bfH_{0,\Gammam}(\curl, \Ome) := \{\bfE \in \bfH(\curl,\Ome)\,|\, \gamma_t \bfE= 0 \mbox{ on $\Gammam$}\}$. 
We also recall the tangential component operator $\pi_t$.  
%-------------------------------------------------------------------------
\subsection{Derivation of the ODE system (\ref{eq:maxwell-ODE})}
%-------------------------------------------------------------------------
We start with
\begin{lemma}
\label{lemma:curl-of-product}
For $\psi \in H^1(D) $ and $w \in L^2(0,L)$ there holds in the sense
of distributions
\begin{align*}
\curl \left(w \left(\begin{array}{c} \bfnab \times \psi \\ 0 \end{array}\right)\right) 
&= 
\left(\begin{array}{c} w^\prime \bfnab \psi  \\ - w \Delta \psi \end{array}\right), 
& 
\curl \left(w \left(\begin{array}{c} \bfnab \psi \\ 0 \end{array}\right)\right) &= 
- \left(\begin{array}{c} w^\prime \bfnab \times \psi  \\ 0 \end{array}\right),  \\
\curl (\bfe_z \psi w) & = \left(\begin{array}{c} w \bfnab \times \psi  \\ 0 \end{array}\right). 
&&
\end{align*}
Here, the expressions $\Delta \psi$ and $w^\prime$ are understood 
in the sense of distributions (in $D$ and $(0,L)$, respectively). 
\end{lemma}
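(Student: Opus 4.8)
The plan is to verify the three identities directly for smooth $\psi$ and $w$ and then to extend them to $\psi \in H^1(D)$, $w \in L^2(0,L)$ by density, all objects being understood in the distributional sense.

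First I would treat the case $\psi \in C^\infty(\overline D)$, $w \in C^\infty([0,L])$. Writing the coordinates of $\Omega = D \times (0,L)$ as $(x_1,x_2,z)$ and using the elementary formula for the three-dimensional curl,
\[
\curl (V_1,V_2,V_3)^\top = (\partial_2 V_3 - \partial_z V_2,\ \partial_z V_1 - \partial_1 V_3,\ \partial_1 V_2 - \partial_2 V_1)^\top,
\]
I would substitute in turn the vector fields $w(\partial_2\psi,-\partial_1\psi,0)^\top$, $w(\partial_1\psi,\partial_2\psi,0)^\top$ and $(0,0,\psi w)^\top$, which correspond to the three left-hand sides. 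Since $\psi$ does not depend on $z$, every $z$-derivative falls on $w$ alone and produces the factor $w^\prime$; the mixed derivatives $\partial_1\partial_2\psi$ cancel wherever they occur; and $\partial_1^2\psi + \partial_2^2\psi = \Delta\psi$. Reading off the three components in each case then gives exactly $\left(\begin{smallmatrix} w^\prime\bfnab\psi \\ -w\Delta\psi\end{smallmatrix}\right)$, $-\left(\begin{smallmatrix} w^\prime\,\bfnab\times\psi \\ 0\end{smallmatrix}\right)$ and $\left(\begin{smallmatrix} w\,\bfnab\times\psi \\ 0\end{smallmatrix}\right)$, respectively. This step uses nothing beyond the definitions of the 2D operators $\bfnab$, $\bfnab\times$ and $\curl$ recalled above and is a short calculation.

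Next I would pass to general $\psi \in H^1(D)$ and $w \in L^2(0,L)$, reading both sides as distributions on $\Omega$. On the left, the argument of $\curl$ lies in $L^2(\Omega)$, and $\curl$ maps $L^2(\Omega)$ continuously into $H^{-1}(\Omega)$. On the right, $w^\prime \in H^{-1}(0,L)$ and $\Delta\psi \in H^{-1}(D)$, while $\bfnab\psi$, $\bfnab\times\psi \in L^2(D)^2$ and $w \in L^2(0,L)$, so each entry is a tensor-product distribution on $(0,L)\times D$ depending continuously on $(w,\psi)$ in the respective norms. Choosing smooth $w_n \to w$ in $L^2(0,L)$ and smooth $\psi_n \to \psi$ in $H^1(D)$, applying the smooth case to each pair, and testing against an arbitrary $\boldsymbol\varphi \in C_0^\infty(\Omega)^3$ (resp.\ $C_0^\infty(\Omega)$), I would let $n \to \infty$ on both sides to obtain the identities in $\mathcal D'(\Omega)$.

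The only step requiring real care is the bookkeeping in this passage to the limit: one must check that each product appearing on the right-hand side is a genuine pairing of a distribution in one variable against an $L^2$ (or distributional) factor in the other, so that convergence of $w_n$, $\psi_n$ in their natural norms does imply convergence of the right-hand sides in $\mathcal D'(\Omega)$; note in particular that $w^\prime\bfnab\psi$ need not be a function when $w$ is merely $L^2$, which is precisely why the statement is formulated distributionally. Once the tensor-product structure is made explicit this is routine and no genuine obstacle remains.
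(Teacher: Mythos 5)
Your proposal is correct and follows exactly the route the paper intends: the paper's entire proof is the single line ``This is a calculation,'' and your direct componentwise computation of the $3$D curl for the three vector fields (with $\psi$ independent of $z$, so that $z$-derivatives land on $w$ and the mixed second derivatives of $\psi$ cancel) is that calculation, carried out correctly. The additional density/tensor-product argument you supply to justify the identities for $\psi\in H^1(D)$, $w\in L^2(0,L)$ in the distributional sense is sound and merely makes explicit what the paper leaves implicit.
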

\begin{proof}
This is a calculation. 
\end{proof}
Next, we give more details about the derivation of the equations (\ref{eq:maxwell-ODE}): 
\begin{lemma}
\label{lemma:maxwell-ODE-details}
Let $\bff$, $\bfg \in L^2(\Ome)$, and let 
the functions $\bfE \in \bfH_{0,\Gammam}(\curl,\Ome)$, $\bfH \in \bfH(\curl,\Ome)$ satisfy (\ref{eq:primal-maxwell-a})--(\ref{eq:primal-maxwell-d}). 
Let the coefficients $\alpha_i$, $\beta_i$, $\gamma_i$, $\delta_i$, $\eta_i$, 
and $\zeta_i$ of the functions $\bfE$, $\bfH$ be given by (\ref{eq:ansatz}). 
Then these coefficients satisfy the ODE system (\ref{eq:maxwell-ODE})
in the sense of distributions. Additionally, the functions $\gamma_i$, $\zeta_i \in L^2(0,L)$ and 
$\alpha_i$, $\beta_i$, $\delta_i$, $\eta_i \in H^1(0,L)$ so that 
the ODE system (\ref{eq:maxwell-ODE}) holds in a weak sense and pointwise 
almost everywhere. 
\end{lemma}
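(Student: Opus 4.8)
The plan is to work throughout with the strong ($L^2(\Ome)$) form of Maxwell's equations, which is legitimate since $\bfE\in\bfH_{0,\Gammam}(\curl,\Ome)$ and $\bfH\in\bfH(\curl,\Ome)$, so that (\ref{eq:primal-maxwell-a})--(\ref{eq:primal-maxwell-b}) hold as identities in $L^2(\Ome)$. First, because $\{(\bfnab\times\psi_i,0)^\top\}_i\cup\{(\bfnab\phi_j,0)^\top\}_j\cup\{\ez\phi_j\}_j$ and $\{(\bfnab\psi_i,0)^\top\}_i\cup\{(\bfnab\times\phi_j,0)^\top\}_j\cup\{\ez\psi_i\}_i$ are orthogonal bases of $L^2(\Ome)$ (Helmholtz decomposition in $L^2(D)^2$ together with completeness of the Dirichlet/Neumann eigenfunctions in $L^2(D)$), the six coefficient functions in (\ref{eq:ansatz}) are well defined and lie a priori in $L^2(0,L)$, with $\|\bfE\|_{L^2(\Ome)}^2=\sum_i\|(\alpha_i,\beta_i,\lambda_i^{-1/2}\gamma_i)\|_{L^2(0,L)}^2$ and the analogous identity for $\bfH$. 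Then, for a scalar $\varphi\in C_0^\infty(0,L)$ and for $w(x)$ running through the modal fields of the two systems above, I would pair the appropriate equation in $L^2(\Ome)$ with the test field $\bfv(x,z):=\varphi(z)\,w(x)$, integrating by parts in $\Ome$ so as to move the curl onto $\bfv$.

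The boundary integral over $\partial\Ome$ produced by this integration by parts vanishes: since $\varphi(0)=\varphi(L)=0$, the field $\bfv$ vanishes on $\Gammai\cup\Gammao$, while on the lateral boundary $\Gammal$ the integrand vanishes either because $\gamma_t\bfE=0$ there (boundary condition (\ref{eq:primal-maxwell-d})), which is what one uses when testing (\ref{eq:primal-maxwell-a}), or because the tangential trace of $\bfv$ vanishes on $\Gammal$ --- a direct consequence of $\partial_n\psi_i=0$ (Neumann) and $\phi_j=0$ (Dirichlet) on $\partial D$ --- which one uses when testing (\ref{eq:primal-maxwell-b}), where no boundary condition on $\bfH$ is available. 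Lemma~\ref{lemma:curl-of-product} then evaluates $\bfnab\times\bfv$ explicitly (an $L^2(\Ome)$ field, since the transverse profiles are Laplace eigenfunctions so that $-\Delta\psi_i=\mu_i\psi_i$, $-\Delta\phi_j=\lambda_j\phi_j$); combining this with the planar identities (\ref{eq:identities}) reduces each pairing to a one-dimensional integral after invoking the $L^2(D)^2$-orthonormality within each of the families $\{\bfnab\psi_i\}$, $\{\bfnab\times\psi_i\}$, $\{\bfnab\phi_j\}$, $\{\bfnab\times\phi_j\}$, the orthogonality of gradient-type to curl-type fields in $L^2(D)^2$, and the normalizations $\|\psi_i\|^2=\mu_i^{-1}$, $\|\phi_j\|^2=\lambda_j^{-1}$ --- the test fields being chosen precisely so that only such pairings occur. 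Testing (\ref{eq:primal-maxwell-a}) with $(\bfnab\psi_i,0)^\top$, $(\bfnab\times\phi_j,0)^\top$, $\ez\psi_i$ gives (\ref{eq:maxwell-1}), (\ref{eq:maxwell-2}), (\ref{eq:maxwell-3}), and testing (\ref{eq:primal-maxwell-b}) with $(\bfnab\times\psi_i,0)^\top$, $(\bfnab\phi_j,0)^\top$, $\ez\phi_j$ gives (\ref{eq:maxwell-4}), (\ref{eq:maxwell-5}), (\ref{eq:maxwell-6}), all as identities in $\mathcal{D}'(0,L)$.

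For the regularity assertions I would bootstrap within the system just derived. The algebraic relations (\ref{eq:maxwell-3}) and (\ref{eq:maxwell-6}) give $\zeta_i=\tfrac{\mu_i}{i\ome}(\alpha_i-f_{i,3})$ and $\gamma_j=\tfrac{\lambda_j}{i\ome}(g_{j,3}-\eta_j)$, hence $\gamma_i,\zeta_i\in L^2(0,L)$. Then (\ref{eq:maxwell-1}) and (\ref{eq:maxwell-5}) give $\alpha_i'=i\ome\delta_i+f_{i,1}\in L^2(0,L)$ and $\eta_j'=-i\ome\beta_j+g_{j,2}\in L^2(0,L)$, so $\alpha_i,\eta_j\in H^1(0,L)$; feeding this back, (\ref{eq:maxwell-4}) and (\ref{eq:maxwell-2}) give $\delta_i'=\zeta_i+i\ome\alpha_i-g_{i,1}\in L^2(0,L)$ and $\beta_j'=\gamma_j-i\ome\eta_j-f_{j,2}\in L^2(0,L)$, so $\beta_j,\delta_i\in H^1(0,L)$. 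Since $H^1(0,L)\hookrightarrow C([0,L])$, the pointwise and boundary values of $\alpha_i,\beta_j,\delta_i,\eta_j$ are meaningful and (\ref{eq:maxwell-ODE}) holds strongly and a.e., which is the assertion.

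The step I expect to require the most care is the vanishing of the $\partial\Ome$-boundary integral after integration by parts: one must correctly split $\partial\Ome$ into $\Gammai$, $\Gammao$, $\Gammal$ and verify, for \emph{each} of the six test fields, that the relevant tangential trace --- of $\bfE$ when testing the first equation, or of the test field itself when testing the second --- actually vanishes on $\Gammal$. The remaining ingredients --- the modal orthogonality bookkeeping, the identification of the distributional derivatives, and the regularity bootstrap --- are routine once that point is settled.
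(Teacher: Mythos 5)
Your proposal is correct and follows essentially the same route as the paper: test the $L^2(\Omega)$ identities with separated fields $\varphi(z)w(x)$, integrate by parts using $\gamma_t\bfE=0$ on $\Gammam$ for the first equation and the vanishing tangential trace of the test field (i.e.\ $\bfG\in\bfH_0(\curl,\Omega)$) for the second, then read off the 1D equations via Lemma~\ref{lemma:curl-of-product} and the modal orthogonalities. Your regularity bootstrap ($\gamma_i,\zeta_i\in L^2$ from the algebraic relations, then $\alpha_i,\eta_j\in H^1$ from (\ref{eq:maxwell-1}), (\ref{eq:maxwell-5}), then $\beta_j,\delta_i\in H^1$) is the same argument the paper states more tersely.
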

\begin{proof}
Since $\bfE$, $\bfH \in L^2(\Ome)$, we have that $\alpha_i$, $\beta_i$, $\gamma_i$, $\delta_i$, $\eta_i$, $\zeta_i \in L^2(0,L)$.  
To derive the formulas (\ref{eq:maxwell-ODE}), we note that $(\bfE, \bfH)$ satisfies, for $\bfF$, $\bfG \in L^2(\Ome)$, 
\begin{align}
\label{eq:lemma:maxwell-ODE-details-10}
(\bfnab \times  \bfE, \bfF)_{L^2(\Ome)} - \bi \ome (\bfH,\bfF)_{L^2(\Ome)} & = (\bff,\bfF)_{L^2(\Ome)}, \\
\label{eq:lemma:maxwell-ODE-details-20}
(\bfnab \times  \bfH, \bfG)_{L^2(\Ome)} + \bi \ome (\bfH,\bfG)_{L^2(\Ome)} & = (\JMM{\bfg},\bfG)_{L^2(\Ome)}. 
\end{align}
The equations (\ref{eq:maxwell-ODE}) are obtained by suitably choosing the test functions $\bfF$, $\bfG$. 
We illustrate the procedure for (\ref{eq:maxwell-1}) 
%by taking $\bfF = w(z) \left(\begin{array}{c} \bfnab \psi_i \\ 0\end{array}\right)$ 
by taking $\bfF = w(z) \left(\bfnab \psi_i, 0\right)^\top$ 
with $w \in C^\infty_0(0,L)$. We note that $\bfF \in \bfH(\curl,\Ome)$ with 
$$
\bfnab \times \bfF = - \left(\begin{array}{c} w^\prime \bfnab \JMM{\times} \psi_i \\ 0 \end{array}\right) \in L^2(\Ome)
$$
by Lemma~\ref{lemma:curl-of-product}. 
In view of $w \in C^\infty_0(0,L)$ and the 
boundary conditions satisfied by $\bfE$ (cf.\ (\ref{eq:primal-maxwell-c}))
we observe $(\gamma_t \bfE, \pi_t \bfF)_{L^2(\partial\Omega)} = 0$. 
Hence an integration by parts and the fact that the expansion
of $\bfE$ converges in $L^2(\Ome)$ as well as the orthogonalities satisfied 
by the functions $\psi_i$, $\phi_i$ gives 
\begin{align}
\label{eq:lemma:maxwell-ODE-details-30}
(\bfnab \times \bfE, \bfF)_{L^2(\Ome)} & = (\bfE, \bfnab \times \bfF)_{L^2(\Ome)}  = - \int_{z=0}^L \alpha_i(z) w^\prime(z) 
\diff z. 
\end{align}
Hence, we get from 
(\ref{eq:lemma:maxwell-ODE-details-10}) and 
(\ref{eq:lemma:maxwell-ODE-details-30}) 
with the orthogonalities satisfied by the functions $\psi_i$, $\phi_i$ 
\begin{align*}
\int_{z=0}^L -\alpha_i(z) w^\prime(z) - \bi \ome \delta_i(z) w(z)\,\diff z 
&= \int_{z=0}^L w(z) (\bff, \left(\begin{array}{c} \nabla \psi_i \\ 0 \end{array}\right))_{L^2(D)}\,\diff z 
\\ = \int_{z=0}^L w(z)  f_{i,1} w(z)\,\diff z,
\end{align*}
which is the distributional form of (\ref{eq:maxwell-1}). The remaining 5 equations in (\ref{eq:maxwell-ODE}) are obtained similarly with suitable test functions; we note 
that in the choice of $\bfG$, one takes functions such that $\bfG \in \bfH_0(\curl,\Ome)$ so that an integration by parts as in 
(\ref{eq:lemma:maxwell-ODE-details-30}) is again possible. 

Since the right-hand sides $f_{i,1},\ldots,g_{i,3}$ are in $L^2(0,L)$, the system (\ref{eq:maxwell-ODE}) shows that $\alpha_i$, $\beta_i$, $\delta_i \in H^1(0,L)$. 
\end{proof}
%-------------------------------------------------------------------------
\subsection{Mapping properties of the \texorpdfstring{$\dtnm$}{DtNmw}-operator}
\label{sec:mapping-properties-dtnm}
%-------------------------------------------------------------------------
%-------------------------------------------
We note that by \cite[Prop.~{3.6}]{fernandes-gilardi97} the space 
$\{\bfu \in (C^\infty(\overline{\Ome}))^3\,|\, \gamma_t \bfu = 0 \mbox{ in a neighborhood of $\overline{\Gammam}$}\}$
is dense in $\bfH_{0,\Gammam}(\curl,\Ome)$. 

In order to define the $\dtn$-operator, we introduce 
with the eigenpairs $(\mu_i, \psi_i)_i$ and $(\lambda_j, \phi_j)_j$ 
on the space of sufficiently smooth functions the linear functionals (we identify in the canonical way 
$\Gammao \subset \doubleIR^3$ with $D \subset \doubleIR^2$, on which the eigenfunctions $\phi_i$, $\psi_j$ are defined):  
\begin{align}
\label{eq:alphahat}
\alphahat(\bfE)& := (\pi_t \bfE, \bfnab \times \psi_i)_{L^2(\Gammao)}, \\
\label{eq:betahat}
\betahat(\bfE)& := (\pi_t \bfE,  \bfnab \phi_j)_{L^2(\Gammao)}. 
\end{align}
We next show that these linear functionals can be uniquely extended to continuous linear functionals on $\bfH_{0,\Gammam}(\curl,\Ome)$: 
\begin{theorem}
\label{thm:alphai-betaj}
There is $C > 0$ depending only on $\Omega$ and $\Gammam$ such that for all $\bfE \in \bfH_{0,\Gammam}(\curl,\Ome)$ 
\begin{align*}
\sum_{i\ge 1} |\alphahat(\bfE)|^2 \mu_i^{1/2} 
+ 
\sum_{j\ge 1} |\betahat(\bfE)|^2 \lambda_j^{-1/2} &\leq C \|\bfE\|^2_{\bfH(\curl,\Ome)}. 
\end{align*}
\end{theorem}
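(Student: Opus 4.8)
The plan is to deduce the two weighted bounds from a dual reformulation: by duality in weighted $\ell^2$ it is enough to show $\bigl|\sum_{i\ge 2} c_i\,\alphahat(\bfE)\bigr|\le C\|\bfE\|_{\bfH(\curl,\Ome)}$ for every finitely supported $(c_i)_{i\ge 2}$ with $\sum_{i\ge 2}|c_i|^2\mu_i^{-1/2}\le 1$, and $\bigl|\sum_{j\ge 1} d_j\,\betahat(\bfE)\bigr|\le C\|\bfE\|_{\bfH(\curl,\Ome)}$ for every finitely supported $(d_j)_{j\ge 1}$ with $\sum_{j\ge 1}|d_j|^2\lambda_j^{1/2}\le 1$. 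For each such sequence I will construct an explicit field on $\Ome$ whose tangential trace on $\Gammao$ carries the corresponding combination $\sum_i c_i\,\bfnab\times\psi_i$, respectively $\sum_j d_j\,\bfnab\phi_j$, evaluate the pairing by Green's formula for $\bfH(\curl,\Ome)$, and estimate. I will use throughout the normalizations $\|\bfnab\psi_i\|_{L^2(D)}=\|\bfnab\times\psi_i\|_{L^2(D)}=1$, $\|\psi_i\|_{L^2(D)}^2=\mu_i^{-1}$ (and likewise for $\phi_j$, $\lambda_j$), the eigenrelations $-\Delta\psi_i=\mu_i\psi_i$, $-\Delta\phi_j=\lambda_j\phi_j$, and the mutual $L^2(D)$-orthogonality of the families $\{\psi_i\}$, $\{\bfnab\psi_i\}$, $\{\phi_j\}$, $\{\bfnab\times\phi_j\}$; this orthogonality is exactly what makes the individual mode contributions combine in $\ell^2$ rather than accumulate.

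For $\alphahat$ set $\chi_i(z):=e^{-\sqrt{\mu_i}\,(L-z)}$ (admissible since $\mu_i>0$ for $i\ge 2$), $\Phi:=\sum_{i\ge 2}c_i\,\chi_i(z)\,\psi_i(x)\in H^1(\Ome)$, and $\bfF:=\bfnab\Phi$, so $\bfnab\times\bfF=0$. Since $\chi_i'=\sqrt{\mu_i}\,\chi_i$ and $\|\chi_i\|_{L^2(0,L)}^2\le\tfrac1{2\sqrt{\mu_i}}$, orthogonality gives $\|\bfF\|_{L^2(\Ome)}^2=\sum_{i\ge 2}|c_i|^2\bigl(\|\chi_i\|_{L^2(0,L)}^2+\mu_i^{-1}\|\chi_i'\|_{L^2(0,L)}^2\bigr)\le\sum_{i\ge 2}|c_i|^2\mu_i^{-1/2}\le 1$. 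For a smooth $\bfE$ with $\gamma_t\bfE=0$ in a neighborhood of $\overline{\Gammam}$ — these are dense in $\bfH_{0,\Gammam}(\curl,\Ome)$, as recalled above — Green's formula for $\bfH(\curl,\Ome)$ applied to the pair $(\bfE,\bfF)$, combined with $\bfnab\times\bfF=0$, the identity $\bfnab\psi_i\times\ez=\bfnab\times\psi_i$ on $\Gammao$, and the vanishing of the boundary contribution on $\Gammam$ (where $\pi_t\bfE=0$, including near the edge $\partial\Gammao$), yields $(\bfnab\times\bfE,\bfF)_{L^2(\Ome)}=\sum_{i\ge 2}c_i\,(\pi_t\bfE,\bfnab\times\psi_i)_{L^2(\Gammao)}=\sum_{i\ge 2}c_i\,\alphahat(\bfE)$. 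Because $\bfF$ is curl-free only $\|\bfnab\times\bfE\|_{L^2}$ survives, so Cauchy--Schwarz gives $\bigl|\sum_{i\ge 2}c_i\,\alphahat(\bfE)\bigr|=\bigl|(\bfnab\times\bfE,\bfF)_{L^2(\Ome)}\bigr|\le\|\bfnab\times\bfE\|_{L^2(\Ome)}\|\bfF\|_{L^2(\Ome)}\le\|\bfE\|_{\bfH(\curl,\Ome)}$. The map $\bfE\mapsto(\bfnab\times\bfE,\bfF)_{L^2(\Ome)}$ is continuous with respect to $\|\cdot\|_{\bfH(\curl,\Ome)}$, hence is the unique continuous extension of $\bfE\mapsto\sum_i c_i\,\alphahat(\bfE)$, so the bound holds on all of $\bfH_{0,\Gammam}(\curl,\Ome)$; taking the supremum over admissible $(c_i)$ gives $\sum_{i\ge 2}|\alphahat(\bfE)|^2\mu_i^{1/2}\le\|\bfE\|_{\bfH(\curl,\Ome)}^2$.

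For $\betahat$ the mechanism is identical, but the lifting cannot be taken curl-free. Set $\rho_j(z):=e^{-\sqrt{\lambda_j}\,(L-z)}$ and $\bfG:=-\sum_{j\ge 1}d_j\,\rho_j(z)\,(\bfnab\times\phi_j(x),0)^\top\in\bfH(\curl,\Ome)$. By Lemma~\ref{lemma:curl-of-product}, $\bfnab\times\bfG=-\sum_{j\ge 1}d_j\,(\rho_j'\,\bfnab\phi_j,\;\lambda_j\rho_j\,\phi_j)^\top$, and the orthogonality relations together with $\|\rho_j\|_{L^2(0,L)}^2\le\tfrac1{2\sqrt{\lambda_j}}$, $\rho_j'=\sqrt{\lambda_j}\,\rho_j$ give $\|\bfG\|_{L^2(\Ome)}^2=\sum_{j\ge 1}|d_j|^2\|\rho_j\|_{L^2(0,L)}^2\le\tfrac12\sum_{j\ge 1}|d_j|^2\lambda_j^{-1/2}$ and $\|\bfnab\times\bfG\|_{L^2(\Ome)}^2=\sum_{j\ge 1}|d_j|^2\bigl(\|\rho_j'\|_{L^2(0,L)}^2+\lambda_j\|\rho_j\|_{L^2(0,L)}^2\bigr)\le\sum_{j\ge 1}|d_j|^2\lambda_j^{1/2}\le 1$; since $\sum_{j\ge 1}|d_j|^2\lambda_j^{-1/2}\le\lambda_1^{-1}\sum_{j\ge 1}|d_j|^2\lambda_j^{1/2}\le\lambda_1^{-1}$ this yields $\|\bfG\|_{\bfH(\curl,\Ome)}^2\le 1+\tfrac1{2\lambda_1}$. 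For a smooth $\bfE$ with $\gamma_t\bfE=0$ near $\overline{\Gammam}$, Green's formula, the identity $(\bfnab\times\phi_j)\times\ez=-\bfnab\phi_j$ on $\Gammao$, and $\pi_t\bfE=0$ on $\Gammam$ give $(\bfnab\times\bfE,\bfG)_{L^2(\Ome)}-(\bfE,\bfnab\times\bfG)_{L^2(\Ome)}=\sum_{j\ge 1}d_j\,(\pi_t\bfE,\bfnab\phi_j)_{L^2(\Gammao)}=\sum_{j\ge 1}d_j\,\betahat(\bfE)$, so Cauchy--Schwarz yields $\bigl|\sum_{j\ge 1}d_j\,\betahat(\bfE)\bigr|\le\|\bfE\|_{\bfH(\curl,\Ome)}\|\bfG\|_{\bfH(\curl,\Ome)}\le\bigl(1+\tfrac1{2\lambda_1}\bigr)^{1/2}\|\bfE\|_{\bfH(\curl,\Ome)}$. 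As before, density and the supremum over admissible $(d_j)$ give $\sum_{j\ge 1}|\betahat(\bfE)|^2\lambda_j^{-1/2}\le\bigl(1+\tfrac1{2\lambda_1}\bigr)\|\bfE\|_{\bfH(\curl,\Ome)}^2$.

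The substantive point — and the only one requiring real care — is that one must not estimate the modes one at a time: even the correct pointwise bounds $|\alphahat(\bfE)|^2\mu_i^{1/2}\lesssim\|\bfE\|_{\bfH(\curl,\Ome)}^2$ (and the analogue for $\betahat$) do not sum over the infinitely many evanescent modes, so the argument must test against a whole weighted-$\ell^2$ sequence at once and exploit the orthogonality of the transverse mode families. The asymmetry of the two weights then falls out of the constructions: for $\alphahat$ the lifting $\bfF=\bfnab\Phi$ can be chosen \emph{curl-free}, so the pairing collapses to $(\bfnab\times\bfE,\bfF)_{L^2(\Ome)}$ and only $\|\bfnab\times\bfE\|_{L^2}$ enters, which is exactly what upgrades the weight from $\mu_i^{-1/2}$ to $\mu_i^{+1/2}$; for $\betahat$ no curl-free extension of $\bfnab\phi_j$ with the right trace exists, leaving the weaker weight $\lambda_j^{-1/2}$. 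The exponential profiles $e^{-\sqrt{\mu_i}(L-z)}$ and $e^{-\sqrt{\lambda_j}(L-z)}$ are the ones balancing the one-dimensional $z$-integrals against the transverse frequencies (and, incidentally, making all constants independent of $L$). The remainder is the routine verification of the $\bfH(\curl)$ Green identity on the dense class of smooth fields vanishing near $\Gammam$ and bookkeeping of one-dimensional integrals.
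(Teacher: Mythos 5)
Your proof is correct, but it takes a genuinely different route from the paper's. The paper proves Theorem~\ref{thm:alphai-betaj} by (i) a regular decomposition $\bfE = \bfpsi + \nabla p$ with $\bfpsi \in \bfH^1(\Ome)$ (Buffa--Ciarlet), (ii) the tangential-trace machinery $\gamma_t : \bfH(\curl,\Ome) \to \bfH^{-1/2}_{\parallel}(\operatorname{div}_\Gamma,\Gamma)$ together with $\operatorname{div}_\Gamma \gamma_t \bfu = \curl_\Gamma \pi_t \bfu$ and localization of the surface divergence to $\Gammao$, and (iii) the observation that $\alphahat(\bfpsi)$, $\betahat(\bfpsi)$ are the expansion coefficients of $\operatorname{div}_\Gamma\gamma_t\bfpsi$, resp.\ $\operatorname{div}_\Gamma\pi_t\bfpsi$, in the eigenbases, so the weighted $\ell^2$ sums are equivalent to $H^{\pm1/2}(\Gammao)$ norms of these surface divergences. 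You instead argue by duality in weighted $\ell^2$, construct explicit liftings with the exponential profiles $e^{-\sqrt{\mu_i}(L-z)}$, $e^{-\sqrt{\lambda_j}(L-z)}$, and conclude via the $\bfH(\curl)$ Green identity; in effect you recombine ingredients the paper deploys elsewhere (the liftings of Lemma~\ref{lemma:alphai-betaj-reconstruction} and the pairing argument in the proof of Theorem~\ref{thm:maxwell-dtn}) into a self-contained proof of this theorem. What your route buys is elementarity and transparency: no regular decomposition, no $\bfH^{-1/2}_{\parallel}(\operatorname{div}_\Gamma)$ trace theory, explicit constants ($1$ and $(1+\tfrac{1}{2\lambda_1})^{1/2}$) that are visibly independent of $L$, and a clean explanation of the asymmetric weights (the curl-free lifting for $\alphahat$ versus the unavoidable curl of the $\betahat$ lifting). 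What the paper's route buys is structural information your argument does not produce --- e.g.\ that the gradient part of the decomposition contributes nothing to $\alphahat$, and that $\betahat(\bfpsi)$ is in fact controlled by the stronger norm $\|\bfpsi\|_{\bfH^1(\Ome)}$ --- which is reused in the surrounding development. The only points deserving a little more care in a final write-up are the justification of the Green identity with an honest $L^2(\Gammao)$ boundary integral for your liftings (unproblematic here because $\gamma_t\bfE$ is smooth and supported away from the edge $\partial\Gammao$ on the dense class, and $\pi_t\bfF|_{\Gammao}$, $\pi_t\bfG|_{\Gammao}$ are finite sums of $L^2(D)$ functions) and the bookkeeping of signs and complex conjugates in the boundary pairing, neither of which affects the absolute-value estimates.
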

\begin{proof}
\emph{Step 1:} By the arguments given in \cite[pp.~28/29]{buffa-ciarlet01a}, we can decompose 
$\bfE = \bfpsi + \nabla p$ with $\bfpsi \in \bfH^1(\Ome) \cap \bfH_{0,\Gammam}(\curl,\Ome)$ and 
$p \in H^1_{\Gammam}(\Omega)\JMM{:= \{p \in H^1(\Omega)\,|\, p|_{\Gammam} = 0\}}$ together with the stability estimate
\begin{align}
\label{eq:thm:alphai-betaj} 
\|\bfpsi\|_{\bfH^1(\Ome)} + \|p\|_{H^1(\Ome)} &\leq C \|\bfE\|_{\bfH(\curl,\Ome)}. 
\end{align}
\emph{Step 2:}
By the density result \cite[Prop.~{3.6}]{fernandes-gilardi97}, 
we may assume in Steps~{3}--{6} that $\bfpsi$ is smooth and vanishes 
in a neighborhood of $\Gammam$ as we only need to control
$\|\bfpsi\|_{\bfH(\curl,\Ome)}$. In Step~7, we may assume 
that $\bfpsi$ is smooth and that we control $\|\bfpsi\|_{\bfH^1(\Ome)}$. 
In Step~7, we may also assume $p|_{\Gammao} \in C^\infty_0(\Gammao)$ by the 
the standard density
of $C^\infty_0(\Gammao)$ in $\widetilde{H}^{\JMM{1/2}}(\Gammao)$. 

\emph{Step 3:} With the notation $\bfH^{1/2}_{\parallel}(\Gamma)$ of \cite[p.~{14}]{buffa-ciarlet01a}
and its dual $\bfH^{-1/2}_{\parallel}(\Gamma):= (\bfH^{1/2}_{\parallel}(\Gamma))^\prime$ 
we introduce (cf.\ \cite[p.~{22}]{buffa-ciarlet01a}) 
$\bfH^{-1/2}_{\parallel}(\operatorname{div}_\Gamma,\Gamma):= \{\bfu \in \bfH^{-1/2}_{\parallel}(\Gamma)\,|\, \operatorname{div}_\Gamma  \bfu \in H^{-1/2}(\Gamma)\}$. 
By \cite[Thm.~{3.9}]{buffa-ciarlet01a}, the mapping $\gamma_t: \bfH(\curl,\Ome) \rightarrow \bfH^{-1/2}_{\parallel}(\operatorname{div}_\Gamma,\Gamma)$ 
is continuous. Furthermore, one has 
\begin{equation}
\operatorname{div}_\Gamma \gamma_t \bfu = \curl_\Gamma \pi_t\bfu \quad \mbox{ in $H^{-1/2}(\Gamma)$}, 
\end{equation}
which follows by a straight-forward calculation for smooth $\bfu$ and a density argument. 

\emph{Step 4:} Since $\operatorname{div}_\Gamma \gamma_t \bfpsi \in H^{-1/2}(\Gamma)$ (by the continuity of $\gamma_t$) and 
$\gamma_t \bfpsi = 0$ on $\Gammam$ we get $\operatorname{div}_\Gamma \gamma_t \bfpsi = 0$ in $H^{-1/2}(\Gammam)$. By the density 
of compactly supported functions in $H^{1/2}(\Gammam)$, \cite[Thm.~{11.1}]{lions-magenes72},
we infer the stronger result 
$\operatorname{div}_\Gamma \gamma_t \bfpsi = 0$ in $(H^{1/2}(\Gammam))^\prime$. 
From \cite[Prop.~{3.3}]{fernandes-gilardi97}, we get the bound  
%the stronger result $\operatorname{div}_\Gamma \gamma_t \bfpsi \in \left(H^{1/2}(\Gammao)\right)^\prime$
%together with 
\begin{equation}
\label{eq:thm:alphai-betaj-20}
\|\operatorname{div}_\Gamma \gamma_t \bfpsi \|_{(H^{1/2}(\Gammao))^\prime} \leq C 
\|\operatorname{div}_\Gamma \gamma_t \bfpsi \|_{H^{-1/2}(\Gamma)} \leq C \|\bfpsi \|_{\bfH(\curl,\Ome)}. 
\end{equation}
\emph{Step 5:} The operators $\bfnab \times $ is a continuous mapping $H^1(\Gammao) \rightarrow L^2(\Gammao)$ as well as 
$L^2(\Gammao) \rightarrow \bfH^{-1}(\Gammao)$ so that, by interpolation, 
\begin{align}
\|\bfnab \times \psi_i\|_{\bfH^{-1/2}(\Gammao)} &\leq C \|\psi_i\|_{H^{1/2}(\Gammao)} \quad \forall i,   \\
\nonumber 
\|\bfnab \phi_j\|_{\widetilde \bfH^{-1/2}(\Gammao)} &\leq C \|\phi_j\|_{\widetilde H^{1/2}(\Gammao)} \quad \forall j.  
\end{align}
To see this, we note that the operator $\operatorname{div}$ is a continuous operator $\bfH^1(\Gammao) \rightarrow L^2(\Gammao)$ and 
$L^2(\Gammao) \rightarrow H^{-1}(\Gammao)$ so that by interpolation it is a continuous map $\bfH^{1/2}(\Gammao) \rightarrow H^{-1/2}(\Gammao)$. 
Hence, using the boundary conditions satisfied by $\phi_j$, 
\begin{align*}
\|\bfnab \phi_j\|_{\widetilde{\bfH}^{-1/2}(\Gammao)} & = 
\sup_{\bfv \in \bfH^1(\Gammao)} \frac{(\bfnab \phi_j, \bfv)_{L^2(\Gammao)}}{\|\bfv\|_{\bfH^{1/2}(\Gammao)} } 
= 
\sup_{\bfv \in \bfH^1(\Gammao)} \frac{(\phi_j, \operatorname{div} \bfv)_{L^2(\Gammao)}}{\|\bfv\|_{\bfH^{1/2}(\Gammao)} }  \\
& \leq \sup_{\bfv \in \bfH^1(\Gammao)} \|\phi_j\|_{\widetilde{H}^{1/2}(\Gammao)}\frac{\|\operatorname{div} \bfv \|_{H^{-1/2}(\Gammao)}}{\|\bfv\|_{\bfH^{1/2}(\Gammao)}}
\leq C \|\phi_j\|_{\widetilde{H}^{1/2}(\Gammao)}. 
\end{align*}
\emph{Step 6:} (estimating $\alphahat$)
{}From the smoothness of $\bfpsi$ and since it vanishes in a neighborhood of $\partial\Gammam$, we get 
\begin{align*}
 \alphahat(\bfpsi)  & =  (\pi_t \bfpsi,\bfnab \times \psi_i)_{L^2(\Gammao)}
= (\curl_\Gamma \pi_t \bfpsi, \psi_i)_{L^2(\Gammao)}
= (\operatorname{div}_\Gamma \gamma_t \bfpsi, \psi_i)_{L^2(\Gammao)}.   
%\leq \|\operatorname{div}_\Gamma \gamma_t \bfpsi\|_{\widetilde{H}^{-1/2}(\Gammao)} \|\psi_i\|_{H^{1/2}(\Gammao)}.  
%&\stackrel{(\ref{eq:thm:alphai-betaj-20})}{\lesssim} \|\bfpsi\|_{\bfH_\Gammam}(\curl,\Ome}  
\end{align*} 
Hence, the coefficients $\alphahat(\bfpsi)$ are the coefficients obtained by expanding $\operatorname{div}_\Gamma \gamma_t \bfpsi$ in 
terms of the functions $(\psi_i)_i$.  Since $\|\psi_i\|_{L^2(\Gammao)}  = \mu_i^{-1/2}$ and using (\ref{eq:thm:alphai-betaj-20}), we get 
\begin{align*}
\sum_{i\ge 1} |\alphahat(\bfpsi)|^2 \mu_i^{1/2} & \sim \|\operatorname{div}_\Gamma \gamma_t \bfpsi\|^2_{\widetilde{H}^{-1/2}(\Gammao)} 
\stackrel{(\ref{eq:thm:alphai-betaj-20})}{\lesssim} \|\bfpsi\|^2_{\bfH(\curl,\Ome)}. 
\end{align*}
Since $p \in \widetilde{H}^{1/2}(\Gammao)$, we compute with an integration by parts 
\begin{align*} 
\alphahat(\nabla p) & = (\nabla p, \bfnab \times \psi_i)_{L^2(\Gammao)} = 0. 
\end{align*}
\emph{Step 7:} (estimating $\betahat$) 
%Using that $\phi_j \in \widetilde{H}^{1/2}(\Gammao)$, an integration by parts gives 
Using that $\phi_j \in \widetilde{H}^{1}(\Gammao)$, an integration by parts gives 
\begin{align*}
\betahat(\bfpsi) & = (\pi_t \bfpsi, \nabla \phi_j)_{L^2(\Gammao)}  = - (\operatorname{div}_\Gamma \pi_t \bfpsi, \phi_j)_{L^2(\Gammao)}. 
\end{align*}
Hence, the coefficients $\betahat(\bfpsi)$ are the coefficients obtained by expanding $-\operatorname{div}_\Gamma \pi_t \bfpsi$ in 
terms of the $(\phi_j)_j$.  Since $\|\phi_j\|_{L^2(\Gammao)}  = \lambda_j^{-1/2}$, we get  
\begin{align*}
\sum_{j\ge 1} |\betahat(\bfpsi)|^2 \lambda_j^{\JMM{1/2}} &\sim  \|\operatorname{div}_\Gamma \pi_t \bfpsi \|^2_{H^{-1/2}(\Gammao)} 
\!\!\stackrel{\operatorname{div}:H^{1/2} \rightarrow H^{-1/2}}{\lesssim} \!\!\! \|\pi_t \bfpsi\|^2_{H^{1/2}(\Gammao)} 
\lesssim \|\bfpsi\|^2_{H^1(\Ome)},  
\end{align*}
which is a stronger estimate than needed. 
Finally, we note that $(\nabla p, \nabla \phi_j)_{L^2(\Gammao)} = \lambda_j (p,\phi_j)_{L^2(\Gammao)}$ so that 
\begin{align*}
\sum_{j\ge 1} |\betahat(\nabla p)|^2 \lambda_j^{-1/2} & = 
\sum_{j\ge 1}  \lambda_j^{3/2} |(p, \phi_j)_{L^2(\Gammao)}|^2  \sim \|p\|^2_{\widetilde{H}^{1/2}(\Gammao)} \lesssim \|p\|^2_{H^1(\Ome)}. 
\end{align*}
\end{proof}
The $\dtnm$-operator (\ref{eq:dtnm}) takes sequences $(\alpha_i)_i$, and $(\beta_j)_j$ 
and forms a new function on $\Gammao$. To see that this function is the 
trace of an $\bfH(\curl,\Ome)$ function, we need the following result: 
\begin{lemma}
\label{lemma:alphai-betaj-reconstruction}
Let $(\alpha_i)_i$ and $(\beta_j)_j$ be two sequences with 
$\sum_{i\ge 1} |\alpha_i|^2 \mu_i^{1/2} < \infty$, 
$\sum_{j\ge 1} |\beta_j|^2 \lambda_j^{-1/2} < \infty$. Then there exists $\bfE \in \bfH_{0,\Gammam}(\curl,\Omega)$ and $\bfH \in \bfH(\curl,\Omega)$ 
such that 
\begin{align*}
\pi_t \bfE & = \sum_{i\ge 1} \alpha_i \bfnab \times \psi_i + \sum_{j\ge 1} \beta_j \bfnab  \phi_j \quad \mbox{ on $D \times \{L\}$},  \\
\pi_t \bfH & = \sum_{i\ge 1} \alpha_i \mu_i^{1/2} \bfnab \psi_i + \sum_{j\ge 1} \beta_j \lambda_j^{-1/2} \bfnab \times \phi_j \quad \mbox{ on $D \times \{L\}$},  \\
%\end{align*}
%and 
%\begin{align*}
\|\bfE \|^2_{\bfH(\curl,\Ome)} + \|\bfH\|^2_{\bfH(\curl,\Ome)} &\lesssim \sum_{i\ge 1} |\alpha_i|^2 \mu_i^{1/2} + \sum_{j\ge 1} |\beta_j|^2 \lambda_j^{-1/2}. 
%\\ \|\bfH \|^2_{\bfH(\curl,\Ome)} &\lesssim \sum_i |\alpha_i|^2 \mu_i^{1/2} + \sum_j |\beta_j|^2 \lambda_j^{-1/2}. 
\end{align*}
\end{lemma}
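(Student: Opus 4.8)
\emph{The plan} is to build $\bfE$ and $\bfH$ by hand as modal series: for each transverse mode prescribed on $\Gammao=D\times\{L\}$ we write down an explicit lifting into $\Ome$, sum, and check the bounds term by term. The one genuine idea is that the transverse modes occur in two flavours that must be lifted differently. A \emph{rotational} mode such as $\bfnab\times\psi_i$ (divergence-free on $D$) is \emph{not} the tangential trace of any $\curl$-free field on $\Ome$, so I would lift it by $w(z)(\bfnab\times\psi_i,0)^\top$, whose $\curl$ is read off from Lemma~\ref{lemma:curl-of-product}; a \emph{gradient} mode such as $\bfnab\phi_j$ I would lift by the $\curl$-free field $\bfnab(\phi_j(x)w(z))$ (full gradient on $\Ome$). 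In both cases I take $w(z)=\theta(z)e^{-\sqrt{\sigma}(L-z)}$, where $\sigma$ is the transverse eigenvalue and $\theta$ a fixed cutoff equal to $1$ near $z=L$ and $0$ near $z=0$; the cutoff is only there to enforce the $\Gammai$-condition for $\bfE$ (for $\bfH$ one takes $\theta\equiv1$), and localizing the lift near $z=L$ is what makes all constants independent of $L$. The decay scale $\sigma^{-1/2}$ is chosen precisely so that the $\bfH(\curl,\Ome)$-energy of a lifted mode is comparable to the corresponding weight in the statement.

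\emph{Concretely}, I would set $\bfE=\sum_i\alpha_i\,\theta e^{-\sqrt{\mu_i}(L-z)}(\bfnab\times\psi_i,0)^\top+\sum_j\beta_j\,\bfnab\bigl(\phi_j\,\theta e^{-\sqrt{\lambda_j}(L-z)}\bigr)$ and $\bfH=\sum_i c_i\,\bfnab\bigl(\psi_i e^{-\sqrt{\mu_i}(L-z)}\bigr)+\sum_j d_j\, e^{-\sqrt{\lambda_j}(L-z)}(\bfnab\times\phi_j,0)^\top$, where $c_i$, $d_j$ are the coefficients prescribed by~(\ref{eq:dtn-condition-maxwell}) and obey $|c_i|\lesssim\sqrt{\mu_i}\,|\alpha_i|$, $|d_j|\lesssim|\beta_j|/\sqrt{\lambda_j}$ (since $|\widetilde\mu_i|\sim\sqrt{\mu_i}$ and $|\widetilde\lambda_j|\sim\sqrt{\lambda_j}$, by the reasoning behind~(\ref{eq:lambda-tildelambda})). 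Using Lemma~\ref{lemma:curl-of-product}, the normalizations $\|\bfnab\psi_i\|_{L^2(D)}=\|\bfnab\times\psi_i\|_{L^2(D)}=1$, $\|\psi_i\|_{L^2(D)}^2=\mu_i^{-1}$ (and the analogues for $\phi_j$, $\lambda_j$), together with $\int_0^L e^{-2\sqrt\sigma(L-z)}\,dz\le\tfrac{1}{2\sqrt\sigma}$ and the fact that the cutoff contributes only an exponentially small correction to $\|w'\|_{L^2(0,L)}^2\lesssim\sqrt\sigma$, one gets the per-mode bounds $\|\cdot\|_{\bfH(\curl,\Ome)}^2\lesssim|\alpha_i|^2\mu_i^{1/2}$ for the $\bfnab\times\psi_i$-lift, $\lesssim|\beta_j|^2\lambda_j^{-1/2}$ for the $\bfnab\phi_j$-lift, and $\lesssim|c_i|^2\mu_i^{-1/2}\lesssim|\alpha_i|^2\mu_i^{1/2}$, $\lesssim|d_j|^2\lambda_j^{1/2}\lesssim|\beta_j|^2\lambda_j^{-1/2}$ for the two $\bfH$-lifts; all constants are independent of the indices and of $L$.

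\emph{Assembling the series} requires that $\{\bfnab\times\psi_i\}_i\cup\{\bfnab\phi_j\}_j$ and $\{\bfnab\psi_i\}_i\cup\{\bfnab\times\phi_j\}_j$ be orthonormal systems in $L^2(D)^2$: within a family this follows from $|\bfnab\times g|=|\bfnab g|$ and eigenfunction orthogonality, and the cross terms vanish by integrating $\operatorname{div}(\phi_j\,\bfnab\times\psi_i)$ (resp.\ $\operatorname{div}(\psi_i\,\bfnab\times\phi_j)$) over $D$ and using $\phi_j|_{\partial D}=0$. Hence all the building blocks above are mutually orthogonal in $\bfH(\curl,\Ome)$; the partial sums are therefore Cauchy (their squared-norm increments are tails of the convergent series $\sum_i|\alpha_i|^2\mu_i^{1/2}+\sum_j|\beta_j|^2\lambda_j^{-1/2}$), the limits $\bfE$, $\bfH\in\bfH(\curl,\Ome)$ satisfy the asserted norm bounds, and — $\gamma_t$ being continuous on $\bfH(\curl,\Ome)$ — the traces on $\Gammao$ are the limits of the partial-sum traces, i.e.\ exactly the prescribed series (each profile equals $1$ at $z=L$, and the tangential part of $\bfnab(\phi_j\,\theta e^{-\sqrt{\lambda_j}(L-z)})$ on $\Gammao$ is $\bfnab\phi_j$).

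\emph{Finally}, the boundary conditions for $\bfE$ on $\Gammam$: on the lateral face the $\bfnab\times\psi_i$-lift has vanishing $\ez$-component and in-plane tangential component $\propto\partial\psi_i/\partial n=0$ (Neumann), while the $\bfnab(\phi_j\cdot)$-lift has $\ez$-component $\propto\phi_j=0$ and in-plane tangential component $\propto\partial_{\bft}\phi_j=0$ (both from $\phi_j|_{\partial D}=0$), so $\pi_t\bfE=0$ there; on $\Gammai$ the cutoff makes $\bfE$ vanish. Thus each $\bfE$-partial sum lies in the closed subspace $\bfH_{0,\Gammam}(\curl,\Ome)$, hence so does $\bfE$, whereas $\bfH$ carries no boundary constraint. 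I expect the only point needing care — and the place a naive attempt would break — to be selecting the right lift for each mode type together with the matching decay scale $\sigma^{-1/2}$; once that dichotomy is fixed, everything else is routine bookkeeping with the explicit profiles.
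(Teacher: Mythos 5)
Your construction is essentially the paper's own proof: the same explicit modal liftings $w(z)\,(\bfnab\times\psi_i,0)^\top$ (equivalently $\curl(\ez\Psi_i)$) for the rotational modes and full gradients $\bfnab(\phi_j w(z))$ for the gradient modes, with the same decay profiles $e^{-\sqrt{\sigma}(L-z)}$, the same per-mode energy bookkeeping via Lemma~\ref{lemma:curl-of-product} and the normalizations, and the same orthogonality argument for summing the series. The one difference is your cutoff $\theta$ near $z=0$; the paper omits it and only verifies $\gamma_t\bfE=0$ on the lateral boundary, so its lifting is merely exponentially small (not zero) on $\Gammai\subset\Gammam$ --- your version actually repairs that point, at the harmless cost of constants depending on a lower bound for $L$.
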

\begin{proof}
We construct the liftings explicitly. Define the functions 
\begin{align*}
\Phi_j(x,y,z)& := \phi_j(x,y) e^{-\sqrt{\lambda_j} (L-z)}, 
&
\Psi_i(x,y,z)& := \psi_i(x,y) e^{-\sqrt{\mu_i} (L-z)}
\end{align*}
and 
\begin{align}
\label{eq:lemma:alphai-betaj-reconstruction-10}
\bfE(x,y,z) &:= \sum_{i\ge 1} \alpha_i \curl (\bfe_z \Psi_i) + \sum_{j \ge 1} \beta_j \bfnab \Phi_j(x,y,z), \\
\label{eq:lemma:alphai-betaj-reconstruction-20}
\bfH(x,y,z) &:= \sum_{i \ge 1} \alpha_i \mu_i^{1/2}\bfnab \Psi_i + \sum_{j\ge 1} \beta_j \lambda_j^{-1/2} \curl (\bfe_z  \Phi_j(x,y,z)). 
\end{align}
Define $N_\alpha:= \sum_{i\ge 1} |\alpha_i|^2 \mu_i^{1/2}$ and $N_\beta:= \sum_{j\ge 1} |\beta_j|^2 \lambda_j^{-1/2}$. 
By orthogonality properties of the functions $\phi_j$, we estimate 
\begin{align*}
\|\sum_{j\ge 1} \beta_j \bfnab \Phi_j\|^2_{L^2(\Ome)} &\lesssim \sum_{j\ge 1} |\beta_j|^2 \left( \|\phi_j\|^2_{L^2(D)} \lambda_j^{+1/2}  + \|\bfnab \phi_j\|^2_{L^2(D)} \lambda_j^{-1/2}\right) 
\sim N_\beta. 
\end{align*}
With $\curl (\bfe_z \Psi_i) = \bfnab \Psi_i \times \bfe_z$ 
we compute (cf.\ Lemma~\ref{lemma:curl-of-product})
\begin{align*}
\curl (\bfe_z \Psi_i ) & = \left(\begin{array}{c} \bfnab \times \psi_i \\ 0 \end{array}\right) e^{-\sqrt{\mu_i} (L-z) }, 
& 
\curl \curl (\bfe_z \Psi_i ) & = \left(\begin{array}{c} \sqrt{\mu_i}\bfnab \psi_i \\ -\Delta \psi_i  \end{array}\right) e^{-\sqrt{\mu_i}( L-z) }, 
\end{align*}
so that by orthogonality properties of the functions $\psi_i$ we estimate 
\begin{align*} 
\|\sum_{i\ge 1} \alpha_i \curl (\bfe_z \Psi_i)\|^2_{L^2(\Ome)} & \lesssim \sum_{i\ge 1} |\alpha_i|^2 \mu_i^{-1/2}, \\ 
\|\curl \sum_{i\ge 1} \alpha_i \curl (\bfe_z \Psi_i)\|^2_{L^2(\Ome)} & \lesssim \sum_{i\ge 1} |\alpha_i|^2 \mu_i^{-1/2} \left(  \mu_i \|\bfnab \psi_i\|^2_{L^2(D)} + \mu_i^2 \|\psi_i\|^2_{L^2(D)} \right) \\
 & \lesssim \sum_{i\ge 1} |\alpha_i|^2 \mu_i^{1/2} \sim N_\alpha. 
\end{align*}
That is, the series defining the function $\bfE \in \bfH(\curl,\Ome)$ in (\ref{eq:lemma:alphai-betaj-reconstruction-10})
converges in $\bfH(\curl,\Ome)$ so that $\bfE \in \bfH(\curl,\Ome)$. 
Since $\Psi_i|_{D \times \{L\}} = \psi_i$ and $\Phi_j|_{D \times \{L\}} = \phi_j$ the tangential component of $\bfE$ on $D \times \{L\}$ coincides with the given sum.
Since the functions $\phi_j \in H^1_0(D)$, the term $\gamma_t \sum_{j\ge 1} \beta_j \bfnab \Phi_j$ vanishes on $\ptl D \times (0,L)$. For the term 
$\sum_{i\ge 1} \alpha_i \curl (\bfe_z \Psi_i)$, we observe that the functions $\psi_i$ are smooth near $\partial D\setminus {\mathcal V}$, where ${\mathcal V}$ is the set of vertices of $D$
so that $\partial_n \psi_i = 0$ pointwise on $\partial D \setminus \{{\mathcal V}\}$. Therefore, $\gamma_t \curl (\bfe_z \Psi_i)$ has a classical trace on 
$(D \setminus{\mathcal V}) \times (0,L)$ and vanishes there. Hence, $\gamma_t \bfE = 0$ on $\partial D \times (0,L)$. 

The arguments for the function $\bfH$ are similar. By the orthogonality properties of the functions $\psi_i$ we estimate
\begin{align*}
\|\sum_{i \ge 1}\alpha_i \mu_i^{1/2} \bfnab \Psi_i\|^2_{L^2(\Ome)} &\lesssim \sum_{i\ge 1} |\alpha_i|^2 \mu_i^{1-1/2} \left(\|\bfnab \psi_i\|^2_{L^2(D)} + \mu_i^1 \|\psi_i\|^2_{L^2(D)}\right) \\
& \sim N_\alpha , \\
\|\sum_{j\ge 1} \beta_j \lambda_j^{-1/2} \curl (\bfe_z \Phi_i)\|^2_{L^2(\Ome)} &\lesssim \sum_{j\ge 1} |\beta_j|^2 \lambda_j^{-1-1/2}  \|\bfnab \phi_j\|^2_{L^2(D)}, \\
\|\curl \sum_{j\ge 1} \beta_j \lambda_j^{-1/2} \curl (\bfe_z \Phi_i)\|^2_{L^2(\Ome)} &\lesssim \sum_{j \ge 1} |\beta_j|^2 \lambda_j^{-1-1/2}  \left( \lambda_j \|\bfnab \phi_j\|^2_{L^2(D)} + \lambda_j^2 \|\phi_j\|^2_{L^2(D)} \right) \\
& \sim N_\beta. 
\end{align*}
That is, the series defining the function $\bfH \in \bfH(\curl,\Ome)$ 
in (\ref{eq:lemma:alphai-betaj-reconstruction-20})
converges in $\bfH(\curl,\Ome)$ so that $\bfH \in \bfH(\curl,\Ome)$. 
Since $\Psi_i|_{D \times \{L\}} = \psi_i$ and $\Phi_j|_{D \times \{L\}} = \phi_j$ the tangential component of $\bfH$ on $D \times \{L\}$ coincides with the given sum.
\end{proof}
%On $\bfH_{0,\Gammam}(\curl,\Ome)$ we define the operator $\dtnm$ by 
%\begin{align}
%\dtnm \bfE &:= \sum_i -\frac{\widetilde{\mu}_i}{i \ome} \alphahat(\bfE) \bfnab \psi_i + \sum_j \frac{i \ome}{\widetilde{\lambda}_j} \betahat(\bfE) \bfnab \phi_j. 
%\end{align}
%with $\widetilde\mu_i$, $\widetilde\lambda_j$ given by (\ref{eq:widetilde-mu_i}), (\ref{eq:widetilde-lambda_j}). 
%This operator actually coincides with the operator $\dtnm$ of (\ref{eq:dtnm}) if the coefficients functions $\alpha_i$, $\beta_j$ are in $H^1(0,L)$ so that  
%$\alpha_i(L) = \alphahat(\bfE)$ and $\beta_j(L) = \betahat(\bfE)$.  
%Motivated by Remark~\ref{rem:alternative-to-impedance-bc}, we define the 
%operator $\dtnm:\bfH_{0,\Gammam}(\curl,\Ome) \rightarrow \bfH^{-1/2}(\Gammao)$ as 
%\begin{align}
%\dtnm \bfE &:= \sum_i -\frac{\widetilde{\mu}_i}{i \ome} \alpha_i \bfnab \psi_i + \sum_j \frac{i \ome}{\widetilde{\lambda}_j} \beta_j \bfnab \phi_j
%\end{align}
%where the coefficients $\alpha_i = \alphahat(\bfE)$ and $\beta_j = \betahat(\bfE)$ are given by (\ref{eq:alphahat}), (\ref{eq:betahat}). 
\begin{theorem}
\label{thm:maxwell-dtn} 
The operator $\dtnm$ of (\ref{eq:dtnm}) is a continuous linear operator 
$\pi_t (\bfH_{0,\Gammam}(\curl,\Ome)) \rightarrow (\gamma_t (\bfH_{0,\Gammam}(\curl,\Ome)))^\prime$ and 
\begin{align}
\label{eq:thm:maxwell-dtn-1} 
\left| \langle \dtnm \bfE,(\gamma_t \bfG)\rangle \right| 
\leq C \|\bfE\|_{\bfH(\curl,\Ome)} \|\bfG\|_{\bfH(\curl,\Ome)}
\  \forall \bfE, \bfG \in \bfH_{0,\Gammam}(\curl,\Ome). 
\end{align}
Furthermore, upon expanding $\bfE$, $\bfG \in \bfH_{0,\Gammam}(\curl,\Ome)$ on $\Gammao$ as 
$\pi_t \bfE = \sum_{i\ge 1} \alphahat(\bfE) \bfnab\times \psi_i + \sum_{j\ge 1} \betahat(\bfE) \bfnab \phi_j$ 
and $\pi_t \bfG = \sum_{i\ge 1} \alphahat(\bfG) \bfnab\times \psi_i + \sum_{j\ge 1} \betahat(\bfG) \bfnab \phi_j$, we have 
\begin{align}
\label{eq:thm:maxwell-dtn-2} 
\langle \dtnm \bfE, \gamma_t \bfG\rangle & 
 = \sum_{i\ge 1} \frac{\widetilde\mu_i}{\bi \ome} \alphahat(\bfE) \alphahat(\bfG) + \sum_{j \ge 1} \frac{\bi \ome}{\widetilde\lambda_j} \betahat(\bfE) \betahat(\bfG). 
\end{align}
\end{theorem}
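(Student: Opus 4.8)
The plan is to reduce both assertions to a direct computation with the two transversal orthonormal systems on $\Gammao$, and then to upgrade from smooth fields to all of $\bfH_{0,\Gammam}(\curl,\Ome)$ by density, using Lemma~\ref{lemma:alphai-betaj-reconstruction} and Theorem~\ref{thm:alphai-betaj}. Two elementary facts are used throughout. With the normalisation fixed in the reduction of the waveguide problem, both $\{\bfnab\psi_i\}_i\cup\{\bfnab\times\phi_j\}_j$ and $\{\bfnab\times\psi_i\}_i\cup\{\bfnab\phi_j\}_j$ are $L^2(\Gammao)^2$-orthonormal systems (the cross terms vanish because $\phi_j\equiv 0$ on $\partial D$, hence $\partial_\tau\phi_j=0$ there). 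Moreover, on $\Gammao$ the outward normal is $\ez$, so $\gamma_t\bfG=(\pi_t\bfG)\times\ez$ for tangential $\bfG$, and $(\bfnab\times\psi_i)\times\ez=-\bfnab\psi_i$, $(\bfnab\phi_j)\times\ez=\bfnab\times\phi_j$; thus the rotation $(\cdot)\times\ez$ carries the $\bfE$-type basis onto the $\bfH$-type basis up to a sign. I note in passing that $\dtnm\bfE$ depends on $\bfE$ only through $\pi_t\bfE|_{\Gammao}$ (by Theorem~\ref{thm:alphai-betaj} and density of smooth fields), so $\dtnm$ is indeed well-defined on $\pi_t(\bfH_{0,\Gammam}(\curl,\Ome))$.

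First I would prove \eqref{eq:thm:maxwell-dtn-2} for smooth fields. By the density result \cite[Prop.~3.6]{fernandes-gilardi97} it suffices to take $\bfE$, $\bfG$ smooth and tangentially vanishing near $\overline{\Gammam}$; for such fields all traces are classical and $\langle\dtnm\bfE,\gamma_t\bfG\rangle$ is the boundary integral $\int_{\Gammao}\dtnm\bfE\cdot\gamma_t\bfG$. Writing $\dtnm\bfE=\sum_i a_i\bfnab\psi_i+\sum_j b_j\bfnab\times\phi_j$ with $a_i=-\tfrac{\widetilde\mu_i}{i\ome}\alphahat(\bfE)$ and $b_j=\tfrac{i\ome}{\widetilde\lambda_j}\betahat(\bfE)$, and expanding $\gamma_t\bfG=(\pi_t\bfG)\times\ez=-\sum_i\alphahat(\bfG)\bfnab\psi_i+\sum_j\betahat(\bfG)\bfnab\times\phi_j$ by the rotation identities above, orthonormality gives at once $\int_{\Gammao}\dtnm\bfE\cdot\gamma_t\bfG=-\sum_i a_i\,\alphahat(\bfG)+\sum_j b_j\,\betahat(\bfG)$, which is the right-hand side of \eqref{eq:thm:maxwell-dtn-2}. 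For the bound \eqref{eq:thm:maxwell-dtn-1}, I would observe that since $\widetilde\mu_i^2=\mu_i-\ome^2$, $\widetilde\lambda_j^2=\lambda_j-\ome^2$ and $\inf_i|\widetilde\mu_i|>0$, $\inf_j|\widetilde\lambda_j|>0$ with only finitely many $\mu_i,\lambda_j$ below $\ome^2$, one has $|\widetilde\mu_i|\sim\sqrt{\mu_i}$ and $|\widetilde\lambda_j|\sim\sqrt{\lambda_j}$ uniformly; applying Cauchy--Schwarz to the two sums in \eqref{eq:thm:maxwell-dtn-2} and bounding the arising factors $\sum_i\sqrt{\mu_i}\,|\alphahat(\bfE)|^2$ and $\sum_j\lambda_j^{-1/2}\,|\betahat(\bfE)|^2$ (and likewise with $\bfG$) through Theorem~\ref{thm:alphai-betaj} yields \eqref{eq:thm:maxwell-dtn-1} for smooth fields.

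To reach arbitrary $\bfE$, $\bfG\in\bfH_{0,\Gammam}(\curl,\Ome)$ one must first give $\langle\dtnm\bfE,\gamma_t\bfG\rangle$ a meaning. The key is that the weights $|\widetilde\mu_i|\sim\sqrt{\mu_i}$ and $1/|\widetilde\lambda_j|\sim\lambda_j^{-1/2}$ occurring in $\dtnm$ are exactly those for which Theorem~\ref{thm:alphai-betaj} provides square-summability of the coefficients of $\dtnm\bfE$; hence Lemma~\ref{lemma:alphai-betaj-reconstruction}, applied to the suitably rescaled sequences, produces $\widehat\bfH_{\bfE}\in\bfH(\curl,\Ome)$ with $\pi_t\widehat\bfH_{\bfE}=\dtnm\bfE$ on $\Gammao$ and $\|\widehat\bfH_{\bfE}\|_{\bfH(\curl,\Ome)}\lesssim\|\bfE\|_{\bfH(\curl,\Ome)}$. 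One then identifies $\langle\dtnm\bfE,\gamma_t\bfG\rangle$ with $-\big[(\curl\widehat\bfH_{\bfE},\bfG)_{L^2(\Ome)}-(\widehat\bfH_{\bfE},\curl\bfG)_{L^2(\Ome)}\big]$: this is independent of the chosen lifting (two liftings differ by a field in $\bfH_0(\curl,\Ome)$, for which the bracket vanishes against any $\bfG$), it coincides with the boundary integral of the previous step when $\bfE$, $\bfG$ are smooth via the $\bfH(\curl)$ Green formula together with $\gamma_t\bfG=0$ on $\Gammam$, and it satisfies \eqref{eq:thm:maxwell-dtn-1} with an $L$-independent constant directly from the lifting bound. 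Finally \eqref{eq:thm:maxwell-dtn-2} passes to the limit by density, both sides being continuous on $\bfH_{0,\Gammam}(\curl,\Ome)\times\bfH_{0,\Gammam}(\curl,\Ome)$.

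The eigenfunction bookkeeping is routine; the genuine obstacle is the step just described, namely checking that the algebraic weights $|\widetilde\mu_i|$ and $1/|\widetilde\lambda_j|$ built into $\dtnm$ match the summability exponents $\mu_i^{1/2}$, $\lambda_j^{-1/2}$ that characterise the admissible tangential traces of $\bfH(\curl,\Ome)$ on $\Gammao$ — this matching is what makes $\dtnm\bfE$ a bona fide element of $(\gamma_t(\bfH_{0,\Gammam}(\curl,\Ome)))'$ and the pairing bounded, uniformly in $L$. The matching in turn relies on the non-degeneracy hypotheses $\inf_i|\widetilde\mu_i|>0$, $\inf_j|\widetilde\lambda_j|>0$ together with the fact that only finitely many transversal modes are propagating.
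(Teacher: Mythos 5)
Your proposal is correct and follows essentially the same route as the paper: the identity \eqref{eq:thm:maxwell-dtn-2} is obtained for smooth fields from the rotation identities $\gamma_t\bfG=(\pi_t\bfG)\times\ez$ and the $L^2(\Gammao)$-orthogonality of the two transversal systems, while boundedness and the extension to all of $\bfH_{0,\Gammam}(\curl,\Ome)$ rest on the weighted $\ell^2$ trace estimates of Theorem~\ref{thm:alphai-betaj}, the lifting of Lemma~\ref{lemma:alphai-betaj-reconstruction}, and the $\bfH(\curl)$ integration-by-parts formula. Your additional direct Cauchy--Schwarz argument for \eqref{eq:thm:maxwell-dtn-1} and the explicit check that the pairing is independent of the chosen lifting are harmless refinements of the same argument, and you correctly identify the matching of the weights $|\widetilde\mu_i|\sim\mu_i^{1/2}$, $|\widetilde\lambda_j|^{-1}\sim\lambda_j^{-1/2}$ with the trace summability exponents as the crux.
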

\begin{proof}
\emph{Proof of (\ref{eq:thm:maxwell-dtn-1}):}
The map $(\bfE,\bfG) \mapsto \langle \dtnm \bfE, \gamma_t \bfG \rangle$ is a continuous sequilinear form in view of Theorem~\ref{thm:alphai-betaj} and Lemma~\ref{lemma:alphai-betaj-reconstruction}. Indeed, 
by Theorem~\ref{thm:alphai-betaj} we have 
$\sum_{i\ge 1} |\alpha_i|^2 \mu_i^{1/2} + \sum_{j\ge 1} |\beta_j|^2 \lambda_j^{-1/2} \lesssim \|\bfE \|^2_{\bfH(\curl,\Ome)}$. 
%By Lemma~\ref{lemma:alphai-betaj-reconstruction}, 
On $\Gammao$ we have $\pi_t \bfE = \sum_{i\ge 1} \alpha_i \bfnab \times \psi_i + \sum_{j\ge 1} \beta_j \bfnab \phi_j$ and, 
since $|\widetilde{\mu}_i| \sim \mu_i^{1/2}$ as well as 
$|\widetilde{\lambda}_j| \sim \lambda_j^{1/2}$, we have 
by Lemma~\ref{lemma:alphai-betaj-reconstruction} 
that the sum defining $\dtnm \bfE$ is the tangential component on $\Gammao$ of a 
function $\bfH \in \bfH(\curl,\Ome)$. 
For smooth $\bfG$ vanishing in a neighborhood of 
$\Gammam$ we therefore get with this function $\bfH$  
\begin{align*}
\langle \dtnm \bfE, \gamma_t \bfG\rangle & = 
\langle \pi_t \bfH, \gamma_t \bfG\rangle  = 
(\bfH,\curl \bfG)_{L^2(\Ome)} - (\curl \bfH, \bfG)_{L^2(\Ome)}, 
\end{align*}
which shows the desired boundedness assertion 
for $(\bfE, \bfG) \mapsto \langle \dtnm \bfE, \gamma_t \bfG \rangle$.  

\emph{Proof of (\ref{eq:thm:maxwell-dtn-2}):} We may assume that $\bfE$, $\bfG$ are smooth and vanish in a neighborhood of $\overline{\Gammam}$. 
Then, the expansions 
\begin{align*}
(\pi_t \bfE)|_{\Gammao} & = \sum_{i\ge 1} \alphahat(\bfE) \bfnab \times \psi_i + \sum_{j\ge 1} \betahat(\bfE) \bfnab \phi_j, \\
(\pi_t \bfG)|_{\Gammao} & = \sum_{i\ge 1} \alphahat(\bfG) \bfnab \times \psi_i + \sum_{j\ge 1} \betahat(\bfG) \bfnab \phi_j
\end{align*}
converge in $L^2(\Gammao)$. In fact, the sequences 
$(\alphahat(\bfE))_i$, 
$(\betahat(\bfE))_j$, 
$(\alphahat(\bfG))_i$, 
$(\betahat(\bfG))_j$, 
are also controlled in weighted $\ell^2$-spaces by Theorem~\ref{thm:alphai-betaj}. 
From $\gamma_t \bfG = (\pi_t \bfG) \times \bfn$ we get with the identities (\ref{eq:identities})
\begin{align*}
(\gamma_t \bfG)|_{\Gammao} & = - \sum_{i\ge 1} \alphahat(\bfG) \bfnab \psi_i + \sum_{j\ge 1} \betahat(\bfG) \bfnab \times \phi_j. 
\end{align*}
In view of orthogonalities, we have 
\begin{align*}
(\dtnm \bfE, \gamma_t \bfG)_{L^2(\Gammao)} & = \sum_{i\ge 1} \frac{\widetilde\mu_i}{\bi \ome} \alphahat(\bfE) \alphahat(\bfG) + \sum_{j\ge 1} \frac{\bi \ome}{\widetilde\lambda_j} \betahat(\bfE) \betahat(\bfG). 
\end{align*}
\end{proof}
%-------------------------------------------------------------------------
\subsection{Reconstruction}
%-------------------------------------------------------------------------
The solution of the ODE-system (\ref{eq:maxwell-ODE}) yields a solution
of problem (\ref{eq:primal-maxwell}). To see that, we have to show 
the functions $\bfE$, $\bfH$ defined by (\ref{eq:ansatz}) are actually
in $\bfH(\curl,\Ome)$: 
\begin{lemma}
\label{lemma:reconstruction}
Let $\bff$, $\bfg \in L^2(\Ome)$, and let the functions $\alpha_i$, $\beta_i$, $\gamma_i$, $\delta_i$, $\eta_i$, $\zeta_i$
be defined by the ODE system (\ref{eq:maxwell-ODE}) together with the ``initial'' conditions (\ref{eq:initial-conditions}) and the 
``endpoint'' conditions 
(\ref{eq:dtn-condition-maxwell}). 
%(\ref{eq:impedance-bc}). 
Then the functions 
\begin{align*}
\bfE& :=
\sum_{i\ge 1} \alpha_i(z) \left(\begin{array}{c} \bfnab \times \psi_i \\ 0 \end{array}\right) 
+ \sum_{j\ge 1}  \beta_j(z) \left(\begin{array}{c} \bfnab \phi_j \\ 0\end{array}\right) + \sum_{j\ge 1} \gamma_j(z) \left(\begin{array}{c} 0 \\ \phi_j\end{array}\right), \\
\bfH& :=\sum_{i\ge 1} \delta_i(z) \left(\begin{array}{c} \bfnab \psi_i \\ 0 \end{array}\right) + \sum_{j\ge 1}  \eta_j(z) \left(\begin{array}{c} \bfnab \times \phi_j \\ 0\end{array}\right)+ \sum_{i\ge 1} \zeta_i(z) \left(\begin{array}{c} 0 \\ \psi_i \end{array}\right)
\end{align*}
are in $\bfH(\curl,\Ome)$ and $\|\bfE\|_{\bfH(\curl,\Ome)} + \|\bfH\|_{\bfH(\curl,\Ome)} \leq C \left( \|\bff\|_{L^2(\Ome)} + \|\bfg\|_{L^2(\Ome)}\right)$. 
Additionally, $\bfE \in \bfH_{0,\Gammam}(\Ome)$. 
\end{lemma}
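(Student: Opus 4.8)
The plan is to realize $\bfE$ and $\bfH$ as $L^2(\Ome)$-limits of their truncated modal expansions, to identify the curls of the truncations with truncated expansions of $i\ome\bfH+\bff$ and $-i\ome\bfE+\bfg$ (via Lemma~\ref{lemma:curl-of-product} together with the ODE system~(\ref{eq:maxwell-ODE})), and then to read off $\bfE,\bfH\in\bfH(\curl,\Ome)$, the stability bound, and the boundary condition from closedness of $\curl$ and continuity of $\gamma_t$. First I would record the a priori bounds: since $\mu_i\gtrsim 1$ and $\lambda_j\gtrsim 1$, and since there are only finitely many propagating modes (for which the extra factor $L$ in the propagating-mode parts of Lemmas~\ref{lemma:alpha-delta-zeta} and \ref{lemma:beta-eta-gamma} is harmless), summing the mode-wise estimates of those two lemmas — after eliminating $\zeta_i$, $\gamma_j$, $\eta_j$ through the algebraic equations of~(\ref{eq:maxwell-ODE}) so that the weighted sums close against the data — yields
\begin{multline*}
\sum_i\big(\|\alpha_i'\|^2+\mu_i\|\alpha_i\|^2+\|\delta_i\|^2+\mu_i^{-1}\|\zeta_i\|^2\big)\\
+\sum_j\big(\|\beta_j'\|^2+\lambda_j\|\beta_j\|^2+\|\eta_j\|^2+\lambda_j^{-1}\|\gamma_j\|^2\big)\le C\big(\|\bff\|_{L^2(\Ome)}^2+\|\bfg\|_{L^2(\Ome)}^2\big).
\end{multline*}
By the orthogonal-expansion formulas for $\|\bfE\|^2$ and $\|\bfH\|^2$ from Section~\ref{sec:maxwell}, the series defining $\bfE$ and $\bfH$ then converge in $L^2(\Ome)$ with $\|\bfE\|_{L^2(\Ome)}+\|\bfH\|_{L^2(\Ome)}\le C(\|\bff\|_{L^2(\Ome)}+\|\bfg\|_{L^2(\Ome)})$; writing $\bfE^{(N)}$, $\bfH^{(N)}$ for the partial sums over modes of index $\le N$, we have $\bfE^{(N)}\to\bfE$ and $\bfH^{(N)}\to\bfH$ in $L^2(\Ome)$.

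Next I would apply Lemma~\ref{lemma:curl-of-product} termwise to $\bfE^{(N)}$ and $\bfH^{(N)}$, use $-\Delta\psi_i=\mu_i\psi_i$ and $-\Delta\phi_j=\lambda_j\phi_j$, and substitute the relations~(\ref{eq:maxwell-1})--(\ref{eq:maxwell-6}) for $\alpha_i'$, $\mu_i\alpha_i$, $\gamma_j-\beta_j'$ (and likewise for $\zeta_i-\delta_i'$, $\eta_j'$, $\lambda_j\eta_j$); a direct computation then gives
\[
\curl\bfE^{(N)}=i\ome\,\bfH^{(N)}+\bff^{(N)},\qquad \curl\bfH^{(N)}=-i\ome\,\bfE^{(N)}+\bfg^{(N)},
\]
where $\bff^{(N)}$, $\bfg^{(N)}$ are the partial sums of the orthogonal expansions of $\bff$, $\bfg$ in the orthonormal systems $\{(\bfnab\psi_i,0)^\top\}\cup\{(\bfnab\times\phi_j,0)^\top\}\cup\{\mu_i^{1/2}\ez\psi_i\}$ and $\{(\bfnab\times\psi_i,0)^\top\}\cup\{(\bfnab\phi_j,0)^\top\}\cup\{\lambda_j^{1/2}\ez\phi_j\}$. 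Since the right-hand sides converge in $L^2(\Ome)$, the sequences $\curl\bfE^{(N)}$ and $\curl\bfH^{(N)}$ are Cauchy in $L^2(\Ome)$; as $\curl$ is a closed operator and $\bfE^{(N)}\to\bfE$, $\bfH^{(N)}\to\bfH$ in $L^2(\Ome)$, it follows that $\bfE,\bfH\in\bfH(\curl,\Ome)$ with $\curl\bfE=i\ome\bfH+\bff$ and $\curl\bfH=-i\ome\bfE+\bfg$, and the asserted $\bfH(\curl,\Ome)$-bound follows from the $L^2$-bounds of the first step.

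For $\bfE\in\bfH_{0,\Gammam}(\curl,\Ome)$ with $\Gammam=\Gammai\cup\Gammal$, I would verify $\gamma_t\bfE^{(N)}=0$ on $\Gammam$ for every $N$ and pass to the limit using continuity of $\gamma_t$ on $\bfH(\curl,\Ome)$. On $\Gammal=\partial D\times(0,L)$ this holds termwise: $\beta_j(\bfnab\phi_j,0)^\top$ and $\gamma_j\ez\phi_j$ vanish because $\phi_j\in H^1_0(D)$, and the tangential trace of $\alpha_i(\bfnab\times\psi_i,0)^\top$ on the lateral boundary is governed by $\partial_n\psi_i$, which vanishes away from the corners of $D$ by the Neumann condition on $\psi_i$ (exactly as in the proof of Lemma~\ref{lemma:alphai-betaj-reconstruction}). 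On $\Gammai=D\times\{0\}$ the transverse trace of $\bfE^{(N)}$ equals $\sum_{i\le N}\alpha_i(0)\,\bfnab\times\psi_i+\sum_{j\le N}\beta_j(0)\,\bfnab\phi_j$, which is zero by the initial conditions~(\ref{eq:initial-conditions}); the point values $\alpha_i(0)$, $\beta_j(0)$ make sense because $\alpha_i,\beta_j\in H^1(0,L)\hookrightarrow C[0,L]$ by Lemma~\ref{lemma:maxwell-ODE-details}.

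I expect the main obstacle to be the bookkeeping in the first step: once $\curl$ has introduced the weights $\mu_i$, $\lambda_j$, one must re-express $\eta_j$, $\gamma_j$, $\zeta_i$ through the algebraic equations of~(\ref{eq:maxwell-ODE}) and check that the resulting weighted sums remain controlled by $\|\bff\|_{L^2(\Ome)}^2+\|\bfg\|_{L^2(\Ome)}^2$ — for instance that $\sum_j\|\eta_j\|^2$ and $\sum_j\lambda_j^{-1}\|\gamma_j\|^2$ close up even though the unweighted mode-wise estimate for $\eta_j$ alone carries spurious powers of $\lambda_j$. The remaining steps — the curl identities, the closed-graph passage, and the trace argument — are routine.
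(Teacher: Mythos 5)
Your proposal is correct and follows the same overall architecture as the paper's proof (partial sums, Lemma~\ref{lemma:curl-of-product}, the mode-wise bounds of Lemmas~\ref{lemma:alpha-delta-zeta} and \ref{lemma:beta-eta-gamma}, closedness of $\curl$, and the trace argument via density/continuity of $\gamma_t$), but the middle step is executed differently, and in a way worth noting. The paper controls $\curl \bfE^{(N)}$, $\curl \bfH^{(N)}$ by directly estimating the weighted coefficient sums $\sum_i \|\alpha_i'\|^2$, $\sum_i \mu_i\|\alpha_i\|^2$, $\sum_j \|-\beta_j'+\gamma_j\|^2$, $\sum_i\|-\delta_i'+\zeta_i\|^2$, $\sum_i\|\eta_i'\|^2$, $\sum_i\lambda_i\|\eta_i\|^2$, which requires the auxiliary bounds (\ref{eq:lemma:reconstruction-30})--(\ref{eq:lemma:reconstruction-45}) obtained by feeding the ODEs back in. You instead substitute (\ref{eq:maxwell-1})--(\ref{eq:maxwell-6}) into the termwise curl formulas to obtain the exact identities $\curl\bfE^{(N)} = i\ome\bfH^{(N)} + \bff^{(N)}$ and $\curl\bfH^{(N)} = -i\ome\bfE^{(N)} + \bfg^{(N)}$ (one checks, e.g., that the $\ez$-component works out because $(\bff,\ez\mu_i^{1/2}\psi_i)\,\mu_i^{1/2}\psi_i = \mu_i f_{i,3}\psi_i$ matches the term $\mu_i\alpha_i\psi_i = (i\ome\zeta_i + \mu_i f_{i,3})\psi_i$ produced by $-\Delta\psi_i = \mu_i\psi_i$). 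This buys you two things: the curl bound reduces to $\|\curl\bfE\| \le \ome\|\bfH\| + \|\bff\|$, so the ``spurious powers of $\lambda_j$'' you flag as the main obstacle never actually arise, and you recover the PDE $\curl\bfE = i\ome\bfH + \bff$, $\curl\bfH = -i\ome\bfE + \bfg$ as a byproduct, which is exactly what one wants from a reconstruction lemma. The only cosmetic slips are that your displayed a priori bound carries the weight $\|\eta_j\|^2$ where Lemma~\ref{lemma:beta-eta-gamma} actually delivers the stronger $\lambda_j^2\|\eta_j\|^2$ (irrelevant in your route), and that the $H^1(0,L)$-regularity of $\alpha_i,\beta_j$ is better read off directly from the ODE system with $L^2$ right-hand sides than from Lemma~\ref{lemma:maxwell-ODE-details}, which concerns coefficients of a given PDE solution rather than solutions of the ODE system.
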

\begin{proof}
\emph{Step 1:} We use the notation of (\ref{eq:maxwell-ODE}) and observe that $\bff$, $\bfg \in L^2(\Ome)$ implies that 
\begin{align*}
\sum_{i\ge 1} \|f_{i,1}\|^2_{L^2(0,L)} + \|f_{i,2}\|^2_{L^2(0,L)} + \mu_i \|f_{i,3}\|^2_{L^2(0,L)} & = \|\bff\|^2_{L^2(\Ome)},  \\
\sum_{i\ge 1} \|g_{i,1}\|^2_{L^2(0,L)} + \|g_{i,2}\|^2_{L^2(0,L)} + \lambda_i \|g_{i,3}\|^2_{L^2(0,L)} & = \|\bfg\|^2_{L^2(\Ome)}. 
\end{align*} 
We abbreviate 
$\displaystyle 
M_{\bff, \bfg}:= \|\bff\|^2_{L^2(\Omega)} + \|\bfg \|^2_{L^2(\Omega)}. 
$
From Lemmas~\ref{lemma:alpha-delta-zeta} and \ref{lemma:beta-eta-gamma}
we have 
\begin{align}
\label{eq:lemma:reconstruction-10}
\sum_{i\ge 1} \|\alpha^\prime_i\|^2_{L^2(0,L)} + \mu_i \|\alpha_i\|^2_{L^2(0,L)} + \|\delta_i\|^2_{L^2(0,L)} + \mu_i^{-1} \|\zeta_i\|^2_{L^2(0,L)} 
%&\lesssim \|\bff\|^2_{L^2(\Ome)} + \|\bfg\|^2_{L^2(\Ome)}, \\
&\lesssim M_{\bff,\bfg}, \\
\label{eq:lemma:reconstruction-20}
\sum_{j\ge 1} \|\beta_j\|^2_{L^2(0,L)} + \lambda_j \|\eta_j\|^2_{L^2(0,L)} + \lambda_j^{-1} \|\gamma_j\|^2_{L^2(0,L)} 
&\lesssim M_{\bff,\bfg}.
%&\lesssim \|\bff\|^2_{L^2(\Ome)} + \|\bfg\|^2_{L^2(\Ome)}. 
\end{align}
These bounds together with (\ref{eq:maxwell-2}), (\ref{eq:maxwell-4}), (\ref{eq:maxwell-5}) provide
\begin{align}
\label{eq:lemma:reconstruction-30}
 \sum_{j\ge 1} \|-\beta^\prime_j + \gamma_j\|^2_{L^2(0,L)} &\lesssim \sum_{j\ge 1} \|f_{j,2}\|^2_{L^2(0,L)} + \|\eta_j\|^2_{L^2(0,L)} 
\lesssim M_{\bff,\bfg}, \\
%\lesssim \|\bff\|^2_{L^2(\Ome)} + \|\bfg\|^2_{L^2(\Ome)}, \\
\label{eq:lemma:reconstruction-40}
 \sum_{i\ge 1} \|-\delta^\prime_i + \zeta_i\|^2_{L^2(0,L)} &\lesssim \sum_{i\ge 1} \|g_{i,1}\|^2_{L^2(0,L)} + \|\alpha_i\|^2_{L^2(0,L)} 
\lesssim M_{\bff,\bfg}, \\
%\lesssim \|\bff\|^2_{L^2(\Ome)} + \|\bfg\|^2_{L^2(\Ome)},  \\
\label{eq:lemma:reconstruction-45}
 \sum_{i\ge 1} \|\eta^\prime_i \|^2_{L^2(0,L)} &\lesssim \sum_{i\ge 1} \|g_{i,2}\|^2_{L^2(0,L)} + \|\beta_i\|^2_{L^2(0,L)} 
\lesssim M_{\bff,\bfg}. 
%\lesssim \|\bff\|^2_{L^2(\Ome)} + \|\bfg\|^2_{L^2(\Ome)}. 
\end{align}

\emph{Step 2:} The orthogonality properties and (\ref{eq:lemma:reconstruction-10}), (\ref{eq:lemma:reconstruction-20}) lead to 
\begin{align*} 
\|\bfE\|^2_{L^2(\Ome)} & = \sum_{i\ge 1} \|\alpha_i\|^2_{L^2(0,L)} + \sum_{j\ge 1} \|\beta_j\|^2_{L^2(0,L)} + \sum_{i\ge 1} \lambda_i^{-1} \|\gamma_i\|^2_{L^2(0,L)} 
\lesssim M_{\bff,\bfg},  \\
%\lesssim \|\bff\|^2_{L^2(\Ome)} + \|\bfg\|^2_{L^2(\Ome)}, \\
\|\bfH\|^2_{L^2(\Ome)} & = \sum_{i\ge 1} \|\delta_i\|^2_{L^2(0,L)} + \sum_{j\ge 1} \|\eta_j\|^2_{L^2(0,L)} + \sum_{j\ge 1} \mu_j^{-1} \|\zeta_i\|^2_{L^2(0,L)} 
\lesssim M_{\bff,\bfg}. 
%\lesssim \|\bff\|^2_{L^2(\Ome)} + \|\bfg\|^2_{L^2(\Ome)}. 
\end{align*} 

\emph{Step 3:} From Lemma~\ref{lemma:curl-of-product} we conclude in the sense of distributions
\begin{subequations}
\label{eq:control-curlE-curlH}
\begin{align}
\curl \bfE & = \sum_{i\ge 1} \alpha^\prime_i \left( \begin{array}{c} \bfnab \psi_i \\ 0 \end{array}\right)  + 
\sum_{j\ge 1} (-\beta^\prime_j + \gamma_j) \left(\begin{array}{c} \bfnab \times \phi_j \\ 0 \end{array}\right) + 
\sum_{i\ge 1} \alpha_i \bfe_z \mu_i \psi_i, \\
\curl \bfH & = \sum_{i\ge 1} (-\delta^\prime_i + \zeta_i) \left( \begin{array}{c} \bfnab \times \psi_i \\ 0 \end{array}\right)  + 
\sum_{j\ge 1} \eta^\prime_j  \left(\begin{array}{c} \bfnab \phi_j \\ 0 \end{array}\right) + 
\sum_{i\ge 1} \eta_i \bfe_z \lambda_i \phi_i. 
\end{align}
\end{subequations}
To see $\curl \bfE$, $\curl \bfH \in L^2(\Ome)$, we have to control 
\begin{align*}
\sum_{i\ge 1} \|\alpha_i^\prime\|^2_{L^2(0,L)} + 
\sum_{j\ge 1} \|-\beta_j^\prime+\gamma_j\|^2_{L^2(0,L)} + 
\sum_{i\ge 1} \mu_i \|\alpha_i\|^2_{L^2(0,L)}, \\
\sum_{i\ge 1} \|\eta_i^\prime\|^2_{L^2(0,L)} + 
\sum_{j\ge 1} \|-\delta^\prime_j+ \zeta_j\|^2_{L^2(0,L)} + 
\sum_{i\ge 1} \lambda_i \|\eta_i\|^2_{L^2(0,L)}. 
\end{align*}
Both terms are controlled by $M_{\bff,\bfg} = \|\bff\|^2_{L^2(\Ome)} + \|\bfg\|^2_{L^2(\Ome)}$: for the first one, one uses  
(\ref{eq:lemma:reconstruction-10}), (\ref{eq:lemma:reconstruction-30}) and for the second one one appeals to 
(\ref{eq:lemma:reconstruction-20}), (\ref{eq:lemma:reconstruction-40}), (\ref{eq:lemma:reconstruction-45}). This shows that the curl of partial sums 
of $\bfE$ and $\bfH$ can be controlled in $L^2$ and therefore 
$\bfE$, $\bfH \in \bfH(\curl,\Ome)$. 

To see that $\bfE \in \bfH_{0,\Gammam}(\Ome)$, it suffices to assert, in view of \cite[Prop.~{3.3}]{fernandes-gilardi97}
that $\pi_t \bfE = 0$ on $D \times \{0\}$ and $\partial D \times (0,L)$. For $z = 0$, we note that  $\alpha_i(0) = \beta_j(0) = 0$
implies $\pi_t \bfE = 0$ on $z = 0$. To see that $\bfE$ vanishes on the lateral side $\Gammal$, we use that for the partial
sums $\bfE_N:= \sum_{i=1}^N \alpha_i \bfe_{1,i} + \beta_i \bfe_{2,i} + \gamma_i \bfe_{3,i}$ have that $\gamma_t \bfE_N = 0$ 
on the pieces of $D \times (0,L)$ and therefore, by \cite[Prop.~{3.3}]{fernandes-gilardi97}, also in $\left(H^{1/2}(\Gammal)\right)^\prime$.  
The convergence of the series in $\bfH(\curl,\Ome)$ then implies $\gamma_t \bfE = 0$ on $\Gammal$. 
\end{proof}
\section{Stability analysis of the adjoint operator for the Helmholtz problem (\ref{eq:primal-helmholtz})}
Our goal is the proof of Theorem~\ref{thm:stability-helmholtz-adjoint}, which parallels Theorem~\ref{thm:stability-helmholtz}.

%--------------------------------------------
\subsection{The ultraweak formulation of (\ref{eq:primal-helmholtz})}
%--------------------------------------------
The operator $A$ is given by
\begin{align*}
A \left(\begin{array}{c} p \\ \bfu \end{array}\right)
 = \left(
\begin{array}{c}
\bi \ome \bfu + \fraka \nabla p \\ \bi \ome p + \div \bfu
\end{array}
\right)
\end{align*}
where $D(A)$ includes the boundary conditions%\footnote{note: need to check wether we want $H^{-1/2}(\Gammao)$ instead of $\widetilde{H}^{-1/2}(\Gammao)$.
%Also: need to deal with $\fraka$ in the definition of $\Hdiv$}
\begin{align*}
D(A) & = \{(p,\bfu) \in H^1(\Omega) \times \bfHdiv \colon \\& \qquad  p|_{\Gammai} = 0, \quad \bfu \cdot \bfn = 0 \mbox{ in $H^{-1/2}(\Gammal)$},\quad
\bi\ome \bfu \cdot \bfn + \dtn p = 0 \quad \mbox{ in $\widetilde{H}^{-1/2}(\Gammao)$}\}.
\end{align*}
%\begin{remark}[(sanity check for the boundary conditions)]
%$\bfu \cdot \bfn \in H^{-1/2}(\Gammal)$ makes sense for $\bfu \in \bfHdiv$ since the test functions are in $\widetilde{H}^{1/2}(\Gammal) \subset H^{1/2}(\partial\Omega)$.
%We also have $\bfu \cdot \bfn \in \widetilde{H}^{-1/2}(\Gammao)$: we have $\bfu \cdot \bfn \in H^{-1/2}(\partial\Omega)$. For an arbitrary $\phi \in H^{1/2}(\Gammao)$,
%we let $\widetilde\phi \in H^{1/2}(\partial\Omega)$ by an extension to $\partial\Omega$
%and set $\langle \bfu \cdot \bfn ,\phi\rangle_{\widetilde{H}^{-1/2}(\Gammao) \times H^{1/2}(\Gammao)}:= \langle \bfu \cdot \bfn, \widetilde \phi   \rangle$.
%This is well-defined since the difference of two extensions $\widetilde\phi_1$, $\widetilde\phi_2$ satisfies $\widetilde\phi_1 - \widetilde\phi_2 \in
%\widetilde{H}^{1/2}(\partial\Omega\setminus \overline{\Gammao})$ (we assume, as we may, that the extensions vanish on $\Gammai$.) From $\bfu \cdot \bfn = 0$
%in $H^{-1/2}(\Gammal)$, we conclude $\langle \bfu  \cdot \bfn, \widetilde\phi_1 - \widetilde\phi_2\rangle = 0$.
%
%\todo{insert also the references to \cite[Thm.~{3.29}]{mclean00}. }
%\end{remark}
The adjoint operator is\footnote{we use that $\fraka$ is real-valued}
\begin{align}
A^\ast \left(\begin{array}{c} q \\ \bfv \end{array}\right) =
\left(
\begin{array}{c}
-\bi \ome \bfv - \nabla q  \\ -\div (\fraka \bfv) - \bi \ome q
\end{array}
\right)
\end{align}
and the domain is
\begin{align*}
D(A^\ast)  = \{(q,\bfv) \,|\, & q \in H^1(\Omega), \fraka \bfv \in \bfHdiv, \\ & q|_{\Gammai} = 0, \quad (\fraka \bfv) \cdot \bfn = 0 \quad \mbox{ in $H^{-1/2}(\Gammal)$},
\quad \bi \ome \fraka \bfv \cdot \bfn + \dtn^\ast q =0 \mbox{ in $\widetilde{H}^{-1/2}(\Gammao)$}\}.
\end{align*}

We recall that the operator $\dtn$ is defined in Section~\ref{sec:dtn} in terms of the
eigenpair $(\phi_n,\lambda_n)_{n \in \doubleIN}$ of (\ref{eq:helmholtz-eigenpairs}) and that
the values $\kappa_n$ are defined in (\ref{eq:helmholtz-kappa_n}).

We also record that the computation shows that the adjoint of the operator $\dtn$ is given by
\begin{align*}
\langle p, \dtn^\ast q\rangle &\stackrel{\text{def}}{=}
\langle \dtn p, q\rangle
= -\sum_{n} p_n \overline{\overline{\kappa_n} q_n}
\end{align*}
so that
\begin{align}
\label{eq:dtnstar}
\dtn^\ast q &= -\sum_{n} \overline{\kappa}_n q_n
\end{align}

%-------------------------------------
\subsection{Stability estimates for \texorpdfstring{$A^\ast$}{A*}}
%-------------------------------------
Introduce $H^1_{\Gammai}:= \{v \in H^1(\Omega)\,|\, v|_{\Gammai} = 0\}$. In the following, we will need the following observation
about the sequences $\{\lambda_n\}_n$  and $\{\kappa_n\}_n$:
Noting that there are only finitely many propagating modes and that we assumed (\ref{eq:non-degeneracy}), we have
\begin{subequations}
\label{eq:lambda-tildelambda-adjoint}
\begin{align}
\label{eq:lambda-tildelambda-a-adjoint}
\max_{n \in \IP} \left( |\kappa_n|^{-1} + |\kappa_n|\right) & \leq C  ,\\
\label{eq:lambda-tildelambda-b-adjoint}
\max_{n \in \IP} |\sqrt{\lambda_n}| & \leq C  ,\\
\label{eq:lambda-tildelambda-c-adjoint}
\max_{n \in \doubleIN} \frac{|\sqrt{\lambda_n}|}{|\kappa_n|} & \leq C
\end{align}
\end{subequations}
for a constant that depends on $\ome$.

\begin{lemma}
\label{lemma:L2-estimate-adjoint}
The solution $q \in H^1_{\Gammai}$ of
\begin{align}
\label{eq:lemma:L2-estimate-weak-form-adjoint}
(\nabla v, \fraka \nabla q)_{L^2(\Ome)} - \ome^2 (v,q)_{L^2(\Ome)} - \langle  v, \dtn^\ast q\rangle = (v,f)_{L^2(\Ome)}
\quad \forall v\in H^1_{\Gammai}
\end{align}
satisfies
\begin{align*}
\|q\|_{H^1(\Ome)} \leq C L \|f\|_{L^2(\Ome)}.
\end{align*}
\end{lemma}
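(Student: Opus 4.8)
The plan is to repeat, almost verbatim, the proof of Lemma~\ref{lemma:L2-estimate}, the only new ingredient being the bookkeeping for the adjoint operator $\dtn^\ast$. Write $q(x,z) = \sum_n q_n(z)\varphi_n(x)$ with $q_n(z) := (q(\cdot,z),\varphi_n)_{L^2(D)}$ and set $f_n(z) := (f(\cdot,z),\varphi_n)_{L^2(D)}$. By the orthogonalities (\ref{eq:orthogonalities}) one has the Parseval identities for $\|q\|^2_{L^2(\Ome)}$, $\|\sqrt a\nabla_x q\|^2_{L^2(\Ome)}$, $\|\partial_z q\|^2_{L^2(\Ome)}$, $\|f\|^2_{L^2(\Ome)}$ exactly as in the proof of Lemma~\ref{lemma:L2-estimate}, so it suffices to bound each $\|q_n\|_{1,|\kappa_n|}$ in terms of $\|f_n\|_{L^2(0,L)}$.

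To this end, test (\ref{eq:lemma:L2-estimate-weak-form-adjoint}) with $v(x,z) = v_n(z)\varphi_n(x)$ for arbitrary $v_n \in H^1_{(0}(0,L)$. Using (\ref{eq:orthogonalities}), the identity $\kappa_n^2 = \lambda_n - \ome^2$, and the representation of $\dtn^\ast$ (cf.\ (\ref{eq:dtnstar}); note $\dtn^\ast q = -\sum_n \overline{\kappa_n}\, q_n\,\varphi_n$ on $\Gammao$, so that $-\langle v,\dtn^\ast q\rangle$ contributes the boundary term $\kappa_n v_n(L)\overline{q_n(L)}$), one obtains the decoupled equation
\[
(v_n',q_n')_{L^2(0,L)} + \kappa_n^2 (v_n,q_n)_{L^2(0,L)} + \kappa_n v_n(L)\overline{q_n(L)} = (v_n,f_n)_{L^2(0,L)} \qquad \forall\, v_n \in H^1_{(0}(0,L).
\]
Taking complex conjugates (and using $\overline{(a,b)_{L^2}} = (b,a)_{L^2}$) turns this into $a^{1D}_{\overline{\kappa_n}}(q_n,v_n) = (f_n,v_n)_{L^2(0,L)}$ for all $v_n\in H^1_{(0}(0,L)$, i.e.\ the first problem of Lemma~\ref{lemma:ihl} with wavenumber $\overline{\kappa_n}$ in place of $\kappa$. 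Since the root in (\ref{eq:helmholtz-kappa_n}) is the principal branch, $\overline{\kappa_n}\in\doubleIC_{\ge0}$ with $|\overline{\kappa_n}| = |\kappa_n|$, and $\overline{\kappa_n}>0$ for $n\in\IE$ while $\overline{\kappa_n}\in i\doubleIR\setminus\{0\}$ for $n\in\IP$ (by (\ref{eq:non-degeneracy})). Hence Lemma~\ref{lemma:ihl} applies in exactly the same way as in the proof of Lemma~\ref{lemma:L2-estimate} and delivers the analogue of (\ref{eq:lemma:L2-estimate-10}): $\|q_n\|_{1,|\kappa_n|}\lesssim L\|f_n\|_{L^2(0,L)}$ for $n\in\IP$ and $\|q_n\|_{1,|\kappa_n|}\lesssim |\kappa_n|^{-1}\|f_n\|_{L^2(0,L)}$ for $n\in\IE$.

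It then remains to sum up, which is word for word the computation in the proof of Lemma~\ref{lemma:L2-estimate}: combining the Parseval identities with the previous modal bounds and the spectral estimates (\ref{eq:lambda-tildelambda-adjoint}) (in particular $\sup_n\sqrt{\lambda_n}/|\kappa_n|<\infty$ and $\sup_{n\in\IP}(|\kappa_n|+|\kappa_n|^{-1})<\infty$, $\sup_{n\in\IP}\sqrt{\lambda_n}<\infty$) gives $\|\sqrt a\nabla_x q\|^2_{L^2(\Ome)} + \|\partial_z q\|^2_{L^2(\Ome)} + \|q\|^2_{L^2(\Ome)}\lesssim L^2\|f\|^2_{L^2(\Ome)}$, which is the claim. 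The only point requiring genuine care — and thus the main obstacle — is getting the adjoint boundary term right and checking that complex conjugation places the resulting modal equation exactly under the hypotheses of Lemma~\ref{lemma:ihl} with the conjugated wavenumber $\overline{\kappa_n}$, which lies in the same closed half-plane and has the same modulus as $\kappa_n$; everything after that is a transcription of the proof of Lemma~\ref{lemma:L2-estimate}.
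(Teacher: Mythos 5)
Your proposal is correct and follows essentially the same route as the paper's own proof: modal expansion in the transverse eigenfunctions, reduction to the decoupled 1D problems of Lemma~\ref{lemma:ihl}, and summation using the spectral bounds. Your explicit conjugation step (recasting the modal equation as $a^{1D}_{\overline{\kappa_n}}(q_n,v_n)=(f_n,v_n)$ and noting $\overline{\kappa_n}\in\doubleIC_{\ge 0}$ with $|\overline{\kappa_n}|=|\kappa_n|$) is a welcome clarification of a point the paper passes over silently, and your form of the adjoint boundary term is the correct one (the paper's (\ref{eq:dtnstar}) contains a typo, $\overline{\lambda}_n$ for $\overline{\kappa}_n$).
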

\begin{proof}
We make the ansatz
$$
q(x,z) = \sum_n q_n(z) \phi_n(x)
$$
and set
$$
f_n(z):= (\phi_n,f(\cdot,z))_{L^2(D)}.
$$
By orthogonality properties of the functions $\{\phi_n\}_n$ we have
\begin{align*}
\|q\|^2_{L^2(\Ome)} & = \sum_{n} \|q_n\|^2_{L^2(0,L)}, \\
\|\sqrt{a} \nabla_x q\|^2_{L^2(\Ome)} & = \sum_{n} \lambda^2_n \|q_n\|^2_{L^2(0,L)}, \\
\| \partial_z q\|^2_{L^2(\Ome)} & = \sum_{n}  \|q^\prime_n\|^2_{L^2(0,L)}, \\
\|f\|^2_{L^2(\Ome)} & = \sum_{n} \|f_n\|^2_{L^2(0,L)}.
\end{align*}
Testing (\ref{eq:lemma:L2-estimate-weak-form-adjoint}) with $v(x,z) = v_n(z) \phi_n(x)$ with arbitrary $v_n \in H^1_{(0}(0,L)$
gives due to the orthogonalities satisfied by the functions $\{\phi_n\}_n$
$$
(v^\prime_n,q^\prime_n)_{L^2(0,L)} + {{\kappa^2_n}} (v_n,q_n)_{L^2(0,L)} + v_n(L) {\kappa_n} \overline{q}_n(L) = (v_n,f_n)_{L^2(0,L)}
$$
From Lemma~\ref{lemma:ihl}, we conclude
\begin{align*}
\|q_n\|_{1,|\kappa_n|} \leq C
\begin{cases}
L \|f_n\|_{L^2(0,L)} & \mbox{ if $n \in \IP$}, \\
\kappa_n^{-1} \|f_n\|_{L^2(0,L)} & \mbox{ if $n \in \IE$}.
\end{cases}
\end{align*}
We arrive at
\begin{align*}
\|\sqrt{a}\nabla_x q\|^2_{L^2(\Ome)} &= \sum_{n} \lambda^2_n \|q_n\|^2_{L^2(0,L)}
\stackrel{(\ref{eq:lambda-tildelambda-adjoint})}{\lesssim}  L^2 \sum_{n\in\IP} \frac{\lambda^2_n}{|{\kappa}_n|^2} \|f_n\|^2_{L^2(0,L)}
+ \sum_{n \in \IE} \frac{\lambda^2_n}{|{\kappa}_n|^4} \|f_n\|^2_{L^2(0,L)}
\stackrel{(\ref{eq:lambda-tildelambda-adjoint})}{\lesssim} L^2 \|f\|^2_{L^2(\Ome)}, \\
\|\partial_z q\|^2_{L^2(\Ome)} & = \sum_{n} \|q^\prime_n\|^2_{L^2(0,L)} \lesssim L^2 \sum_{n\in\IP}  \|f_n\|^2_{L^2(0,L)}
+ \sum_{n \in \IE} |{\kappa}_n|^{-2} \|f_n\|^2_{L^2(0,L)}
\lesssim L^2 \|f\|^2_{L^2(\Ome)}, \\
\|q\|^2_{L^2(\Ome)} & = \sum_{n} \|q_n\|^2_{L^2(0,L)} \lesssim
L^2 \sum_{n\in\IP} |{\kappa_n}|^{-2} \|f_n\|^2_{L^2(0,L)} + 
\sum_{n\in\IE} |{\kappa_n}|^{-4} \|f_n\|^2_{L^2(0,L)}  
\lesssim  L^2\|f\|^2_{L^2(\Ome)}. 
\end{align*}
\end{proof}
\begin{lemma}
\label{lemma:L2div-estimate-adjoint}
The solution $q \in H^1_{\Gammai}$ of
\begin{align}
\label{eq:lemma:L2div-estimate-weak-form-adjoint}
(\nabla v, \fraka \nabla q)_{L^2(\Ome)} - \ome^2 (v,q)_{L^2(\Ome)} - \langle  v, \dtn^\ast q\rangle = (\nabla v,\bff)_{L^2(\Ome)}
\quad \forall v\in H^1_{\Gammai}
\end{align}
satisfies
\begin{align*}
\|q\|_{H^1(\Ome)} \leq C L \|\bff\|_{L^2(\Ome)}.
\end{align*}
\end{lemma}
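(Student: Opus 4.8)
The plan is to mirror the proof of Lemma~\ref{lemma:L2div-estimate}, replacing $\dtn$ by $\dtn^\ast$ and reusing the modal reduction already carried out in the proof of Lemma~\ref{lemma:L2-estimate-adjoint}. By linearity, write $\bff = (\bff_x, f_z)^\top$ with a vector-valued transverse part $\bff_x$ and a scalar longitudinal part $f_z$, and treat the right-hand sides $(0,f_z)^\top$ and $(\bff_x,0)^\top$ separately. In both cases expand $q = \sum_n q_n(z)\varphi_n(x)$ in the transverse eigenbasis, so that by (\ref{eq:orthogonalities}) the norms $\|q\|_{L^2(\Ome)}^2$, $\|\sqrt a\,\nabla_x q\|_{L^2(\Ome)}^2$, $\|\partial_z q\|_{L^2(\Ome)}^2$ decompose into $\sum_n$ of the corresponding 1D quantities, exactly as in the proof of Lemma~\ref{lemma:L2-estimate-adjoint}.

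For $\bff = (0,f_z)^\top$, set $f_n(z) := (\varphi_n, f_z(\cdot,z))_{L^2(D)}$. Testing (\ref{eq:lemma:L2div-estimate-weak-form-adjoint}) with $v = v_n(z)\varphi_n(x)$ for arbitrary $v_n \in H^1_{(0}(0,L)$ and using (\ref{eq:orthogonalities}) together with the representation (\ref{eq:dtnstar}) of $\dtn^\ast$ produces, for each $n$, a 1D equation whose left-hand side is the complex conjugate of $a^{1D}_{\kappa_n}(q_n,\cdot)$ (conjugation is harmless: $\kappa_n^2$ is real and complex conjugation preserves all $L^2$-norms) and whose right-hand side is $(f_n, v_n')_{L^2(0,L)}$, i.e.\ the problem for $q_2$ in Lemma~\ref{lemma:ihl}. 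Hence Lemma~\ref{lemma:ihl} gives $\|q_n\|_{1,|\kappa_n|} \le C L|\kappa_n|\,\|f_n\|_{L^2(0,L)}$ for $n\in\IP$ and $\|q_n\|_{1,|\kappa_n|} \le C\|f_n\|_{L^2(0,L)}$ for $n\in\IE$; since $\max_{n\in\IP}|\kappa_n|\le C$ by (\ref{eq:lambda-tildelambda-a-adjoint}), both are dominated by $CL\|f_n\|_{L^2(0,L)}$. Summing $\sum_n\lambda_n^2\|q_n\|^2$, $\sum_n\|q_n'\|^2$, $\sum_n\|q_n\|^2$ as in Lemma~\ref{lemma:L2-estimate-adjoint} and using $\lambda_n/|\kappa_n|^2\le C$ from (\ref{eq:lambda-tildelambda-c-adjoint}) where needed yields $\|q\|_{H^1(\Ome)}\le CL\|f_z\|_{L^2(\Ome)}$.

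For $\bff = (\bff_x,0)^\top$, set $f_n(z) := (\nabla\varphi_n, \bff_x(\cdot,z))_{L^2(D)} = (a\nabla\varphi_n, a^{-1}\bff_x(\cdot,z))_{L^2(D)}$; the fact that $\{\|\sqrt a\,\nabla\varphi_n\|_{L^2(D)}^{-1}\nabla\varphi_n\}_n$ is orthonormal with respect to $(a\cdot,\cdot)_{L^2(D)}$ gives $\sum_n\lambda_n^{-1}\|f_n\|_{L^2(0,L)}^2 \le \|a^{-1}\bff_x\|_{L^2(\Ome)}^2 \lesssim \|\bff_x\|_{L^2(\Ome)}^2$. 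Testing again with $v = v_n(z)\varphi_n(x)$ now lands on the problem for $q_1$ in Lemma~\ref{lemma:ihl} with data $f_n$, so $\|q_n\|_{1,|\kappa_n|}\le CL\|f_n\|_{L^2(0,L)}$ for $n\in\IP$ and $\|q_n\|_{1,|\kappa_n|}\le C|\kappa_n|^{-1}\|f_n\|_{L^2(0,L)}$ for $n\in\IE$. Inserting these into $\sum_n\lambda_n\|q_n\|^2$, $\sum_n\|q_n'\|^2$, $\sum_n\|q_n\|^2$ and invoking the weighted bound on $\{f_n\}_n$ together with (\ref{eq:lambda-tildelambda-adjoint}) — verbatim as in the $\bff_x$-part of Lemma~\ref{lemma:L2div-estimate} — gives $\|q\|_{H^1(\Ome)}\le CL\|\bff_x\|_{L^2(\Ome)}$. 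Adding the two contributions proves the claim.

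The only genuinely new point, and hence the main (though minor) obstacle, is to confirm that the modal reduction of the adjoint weak form really does land on the complex conjugates of the 1D forms covered by Lemma~\ref{lemma:ihl}: one must check that the boundary term arising from $\langle v, \dtn^\ast q\rangle$ carries $\overline{\kappa_n}$, i.e.\ lies in the same branch/sign class as $\kappa_n$, so that the "outgoing" structure — and with it the $L$-independence of the constants in Lemma~\ref{lemma:ihl} — is preserved. This is precisely the computation already recorded in the proof of Lemma~\ref{lemma:L2-estimate-adjoint} (via (\ref{eq:dtnstar})), so it transfers without change.
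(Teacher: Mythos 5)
Your proposal is correct and follows essentially the same route as the paper's own proof: the same splitting of $\bff$ into $(0,f_z)^\top$ and $(\bff_x,0)^\top$, the same modal reduction onto the 1D problems of Lemma~\ref{lemma:ihl}, and the same weighted $\ell^2$-bound $\sum_n \lambda_n^{-1}\|f_n\|^2 \lesssim \|a^{-1}\bff_x\|^2$ for the transverse part. Your extra remark on the conjugated boundary term carrying $\overline{\kappa_n}$ is a point the paper leaves implicit, but it does not change the argument.
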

\begin{proof}
We write the vector $\bff$ as $\bff = (\bff_x,f_z)^\top$ with a vector-valued function $\bff_x$ and a scalar function $f_z$. By linearity of the problem,
we may consider the cases $(\bff_x,0)^\top$ and $(0,f_z)^\top$ as right-hand sides separately.
For $\bff_x = 0$, we proceed as in Lemma~\ref{lemma:L2-estimate-adjoint} by writing
$f_z = \sum_{n} f_n(z) \phi_n(x)$ and get with Lemma~\ref{lemma:ihl} for the corresponding functions $q_n$
\begin{align*}
\|q_n\|_{1,|\kappa_n|} & \leq C
\begin{cases}
L |\kappa_n| \|f_n\|_{L^2(0,L)}  & \mbox{ if $n \in \IP$}, \\
 \|f_n\|_{L^2(0,L)}  & \mbox{ if $n \in \IE$}.
\end{cases} \\
&
\leq C L \|f_n\|_{L^2(0,L)}
\end{align*}
since $\max_{n \in \IP} |\kappa_n| \leq C$.
We may repeat the calculations performed in Lemma~\ref{lemma:L2-estimate-adjoint} to establish
\begin{align*}
\|\sqrt{a} \nabla_x q\|^2_{L^2(\Ome)} &=  \sum_{n} \lambda_n \|q_n\|^2_{L^2(0,L)}
\leq C L^2 \sum_{n} \frac{\lambda_n}{|\kappa_n|^2}  \|f_n\|^2_{L^2(0,L)}  \leq C \|f_z\|^2_{L^2(\Ome)}, \\
\|\partial_z q\|^2_{L^2(\Ome)} &=  \sum_{n} \|q^\prime_n\|^2_{L^2(0,L)}
\leq C L^2 \sum_{n}  \|f_n\|^2_{L^2(0,L)}  \leq C L^2 \|f_z\|^2_{L^2(\Ome)}, \\
\|q\|^2_{L^2(\Ome)} &=  \sum_{n} \|q_n\|^2_{L^2(0,L)}
\leq C L^2 \sum_{n}|\kappa_n|^{-2}   \|f_n\|^2_{L^2(0,L)}  \leq C L^2 \|f_z\|^2_{L^2(\Ome)}.
\end{align*}
For the case of the right-hand side $\bff = (\bff_x,0)^\top$, we define
$
f_n(z):= (\nabla \phi_n, \bff_x(\cdot,z))_{L^2(D)}  = (a \nabla \phi_n, a^{-1} \bff_x(\cdot,z))_{L^2(D)}
$
and note by the fact that the functions $\{\|\sqrt{a} \nabla \phi_n\|_{L^2(D)}^{-1} \nabla \phi_n\}_n$ are an orthonormal (with respect to
$(a \cdot, \cdot)_{L^2(D)}$) basis of its span that
$$
\sum_{n} \lambda^{-1}_n \|f_n\|^2_{L^2(0,L)} =
\int_{0}^L \sum_{n} \frac{1}{\|\sqrt{a} \nabla \phi_n\|^2_{L^2(D)}} |(a \nabla \phi_n, a^{-1} \bff_x(\cdot,z))_{L^2(D)}|^2
\leq \|a^{-1} \bff_x\|^2_{L^2(\Ome)}.
$$
We expand the solution $q$ as $q = \sum_{n} q_n(z) \phi_n(x)$. Testing the equation with functions of the form $v_n(z) \phi_n(x)$ yields again
an equation for the coefficients $q_n$:
\begin{equation*}
 \kappa^2_n (v_n,q_n)_{L^2(0,L)} + (v^\prime_n,q^\prime_n)_{L^2(0,L)} + {\kappa}_n v_n(L) \overline{q}_n(L)  = (v_n,f_n)_{L^2(0,L)} \qquad \forall v_n \in H^1_{(0}(0,L)
\end{equation*}
By Lemma~\ref{lemma:ihl} we get
\begin{align*}
\|q_n\|_{1,|{\kappa}_n|} & \leq C
\begin{cases}
L \|f_n\|_{L^2(0,L)} & \mbox{ if $n \in \IP$ }, \\
|\kappa_n|^{-1} \|f_n\|_{L^2(0,L)} & \mbox{ if $n \in \IE$ }.
\end{cases}
\end{align*}
Hence,
\begin{align*}
\|\sqrt{a} \nabla_x q\|^2_{L^2(\Ome)} & = \sum_{n} \lambda_n \|q_n\|^2_{L^2(0,L)}
\leq C L^2 \sum_{n \in \IP} \lambda_n \|f_n\|^2_{L^2(0,L)} + C \sum_{n \in \IE} \frac{\lambda_n}{|\kappa_n|^4} \|f_n\|^2_{L^2(0,L)}
\leq C L^2 \|\bff_x\|^2_{L^2(\Ome)}, \\
\|\partial_z q\|^2_{L^2(\Ome)} & = \sum_{n} \|q^\prime_n\|^2_{L^2(0,L)}
\leq C L^2 \sum_{n \in \IP} \|f_n\|^2_{L^2(0,L)} + C \sum_{n \in \IE} |\kappa_n|^{-2} \|f_n\|^2_{L^2(0,L)}
\leq C \|\bff_x\|^2_{L^2(\Ome)}  \\
\|q\|^2_{L^2(\Ome)} & \sum_{n} \|q_n\|^2_{L^2(0,L)} \leq C L^2 \sum_{n \in \IP} |\kappa_n|^{-2} \|f_n\|^2_{L^2(0,L)}
+ \sum_{n \in \IE} |\kappa_n|^{-4} \|f_n\|^2_{L^2(0,L)} \leq C \|\bff_x\|^2_{L^2(\Ome)}.
\end{align*}
Putting together the above results proves the claim.
\end{proof}
\begin{theorem}
\label{thm:stability-helmholtz-adjoint}
There is a constant $C > 0$ (depending on $\fraka$ and $\ome$) such that for all $(\bfv,q) \in D(A^\ast)$
\begin{equation*}
\|A^\ast (q,\bfv)^\top\|_{L^2(\Ome)} \ge C L^{-1} \|(\bfv,q)\|_{L^2(\Ome)}.
\end{equation*}
\end{theorem}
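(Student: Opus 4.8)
The plan is to mirror, almost verbatim, the proof of the boundedness below estimate (\ref{eq:thm:stability-helmholtz-20}) in Theorem~\ref{thm:stability-helmholtz}, with the adjoint stability Lemmas~\ref{lemma:L2-estimate-adjoint} and \ref{lemma:L2div-estimate-adjoint} now playing the role that Lemmas~\ref{lemma:L2-estimate} and \ref{lemma:L2div-estimate} played there. Given $(q,\bfv) \in D(A^\ast)$, I would first abbreviate the two components of $A^\ast(q,\bfv)^\top$ by
\[
\bfg := -i\ome\bfv - \nabla q \in L^2(\Ome), \qquad f := -\div(\fraka\bfv) - i\ome q \in L^2(\Ome),
\]
and write down the variational identities that $(q,\bfv)$ satisfies when tested against smooth pairs $(\tilde q,\tilde\bfv)$.

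The second step is the reduction to a scalar second-order problem for $q$. Choosing $\tilde q$ with $\tilde q|_{\Gammai}=0$, integrating by parts the term containing $\div(\fraka\bfv)$, and using the boundary conditions encoded in $D(A^\ast)$ (vanishing of $\fraka\bfv\cdot\bfn$ on $\Gammal$, and rewriting the $\Gammao$-boundary term via $i\ome\,\fraka\bfv\cdot\bfn = \dtn^\ast q$), I would then select the test vector field $\tilde\bfv := -(i\ome)^{-1}\nabla\tilde q$ to eliminate $\bfv$. After multiplying through by $i\ome$ this should yield a weak equation for $q \in H^1_{\Gammai}$ whose left-hand side coincides (up to the conventions fixed there) with the sesquilinear form in (\ref{eq:lemma:L2-estimate-weak-form-adjoint})/(\ref{eq:lemma:L2div-estimate-weak-form-adjoint}), and whose right-hand side is of the form $(\tilde q, i\ome f)_{L^2(\Ome)} + (\nabla\tilde q, \fraka\bfg)_{L^2(\Ome)}$.

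Splitting this right-hand side and applying Lemma~\ref{lemma:L2-estimate-adjoint} to the $f$-contribution and Lemma~\ref{lemma:L2div-estimate-adjoint} to the $\bfg$-contribution then gives $\|q\|_{H^1(\Ome)} \le C L(\|f\|_{L^2(\Ome)} + \|\bfg\|_{L^2(\Ome)})$. To finish, I would recover $\bfv$ algebraically from $\bfv = -(i\ome)^{-1}(\bfg + \nabla q)$, so that $\|\bfv\|_{L^2(\Ome)} \le \ome^{-1}\big(\|\bfg\|_{L^2(\Ome)} + \|\nabla q\|_{L^2(\Ome)}\big)$, and combine with the $H^1$-bound for $q$ to conclude $\|(\bfv,q)\|_{L^2(\Ome)} \le C L \|A^\ast(q,\bfv)^\top\|_{L^2(\Ome)}$, which is the assertion.

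I expect the only real difficulty to be bookkeeping of signs and complex conjugates in the reduction step: $A^\ast$ and $\dtn^\ast$ carry the conjugated quantities $\overline{\kappa}_n$ (cf.\ (\ref{eq:dtnstar})), and the adjoint weak forms in Lemmas~\ref{lemma:L2-estimate-adjoint}--\ref{lemma:L2div-estimate-adjoint} are written with $\overline{\kappa_n^2}$ and $\overline{\kappa}_n$, so one has to check carefully that the boundary term produced by integrating $\div(\fraka\bfv)$ by parts matches $\langle \tilde q, \dtn^\ast q\rangle$ with the correct sign after inserting the boundary condition. Unlike in Theorem~\ref{thm:stability-helmholtz}, no reconstruction/solvability direction is required here, since only the boundedness below inequality is claimed, so the density arguments appearing at the end of that proof can be dispensed with.
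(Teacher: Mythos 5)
Your proposal is correct and follows essentially the same route as the paper's own proof: the same abbreviations for the two components of $A^\ast(q,\bfv)^\top$, the same reduction to the scalar adjoint weak form via a gradient test field, the same application of Lemmas~\ref{lemma:L2-estimate-adjoint} and \ref{lemma:L2div-estimate-adjoint}, and the same algebraic recovery of $\bfv$. The only bookkeeping point (which you already flag) is that the vector test function must carry the coefficient matrix, i.e.\ the paper takes $\tilde\bfu = (i\ome)^{-1}\fraka\nabla\tilde p$ so that the $\bfv$-terms cancel exactly against $(\nabla\tilde p,\fraka\bfv)_{L^2(\Ome)}$; with that adjustment the right-hand side comes out as $-(\nabla\tilde p,\bff)_{L^2(\Ome)}+i\ome(\tilde p,f)_{L^2(\Ome)}$, matching the form required by the adjoint lemmas.
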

\begin{proof}
%First, we note that $A^\ast:D(A^\ast) \rightarrow L^2(\Ome)$ is injective. 
%Indeed, $A^\ast(q,\bfv)^\top = 0$ implies
%$\bfv = \bi\ome \nabla q$ and therefore $q \in H^1(\Ome)$ satisfies a homogeneous second-order equation. Together with the
%boundary condition, one checks that $q = 0$ so that also $\bfv = 0$.
%
Abbreviate for the two components of $A^\ast (q,\bfv)^\top$
$$
\bff := -\bi \ome \bfv - \nabla q \in L^2(\Ome),
\qquad f := -\div (\fraka \bfv) -\bi \ome q \in L^2(\Ome).
$$
Hence, $(q,\bfv)$ satisfy for smooth $(\tilde p,\tilde \bfu)$
\begin{align*}
(\tilde \bfu, -\bi \ome \bfv )_{L^2(\Ome)} - (\tilde \bfu, \nabla q)_{L^2(\Ome)} & = (\tilde \bfu, \bff)_{L^2(\Ome)}, \\
(\tilde p, -\div (\fraka \bfv))_{L^2(\Ome)} - (\tilde p, \bi \ome q)_{L^2(\Ome)} & = (\tilde p, f)_{L^2(\ome) }
\end{align*}
Considering $\tilde p$ with $\tilde p|_{\Gammai} = 0$ and using the boundary conditions satisfied by $(q,\bfv)$ (i.e., $(q,\bfu) \in D(A^\ast)$) yields
after an integration by parts
\begin{align*}
(\tilde \bfu, -\bi \ome \bfv )_{L^2(\Ome)} - (\tilde \bfu, \nabla q)_{L^2(\Ome)} & = (\tilde \bfu, \bff)_{L^2(\Ome)}, \\
(\nabla \tilde p, \fraka \bfv)_{L^2(\Ome)} - (\tilde p, \bi \ome q)_{L^2(\Ome)} + \langle\tilde p, \frac{1}{\bi\ome} \dtn^\ast q\rangle_{\Gammao}  & = (\tilde p, f)_{L^2(\Ome) }
\end{align*}
Selecting $\tilde \bfu = \frac{1}{i\ome}\fraka \nabla \tilde p$ yields
\begin{align*}
(\nabla \tilde p, \fraka \bfv )_{L^2(\Ome)} - \frac{1}{\bi\ome} (\fraka \nabla \tilde p, \nabla q)_{L^2(\Ome)} & = \frac{1}{\bi\ome}(\nabla \tilde p, \bff)_{L^2(\Ome)}, \\
(\nabla \tilde p, \fraka \bfv)_{L^2(\Ome)} - (\tilde p, \bi \ome q) + \langle\tilde p, \frac{1}{\bi\ome}\dtn^\ast q\rangle_{\Gammao}  & = (\tilde p, f)_{L^2(\Ome) }
\end{align*}
so that, by subtracting these two equations, we arrive at
\begin{align*}
- \frac{1}{\bi\ome} (\nabla \tilde p, \nabla q)_{L^2(\Ome)} + (\tilde p, \bi \ome q)_{L^2(\Ome)} - \langle \tilde p, \frac{1}{\bi\ome}\dtn^\ast q\rangle_{\Gammao}
& = \frac{1}{\bi\ome} (\nabla \tilde p, \bff)_{L^2(\Ome)} - (\tilde p, f)_{L^2(\Ome)}
\end{align*}
Rearranging terms yields
\begin{align*}
(\nabla \tilde p, \nabla q)_{L^2(\Ome)} -\ome^2  (\tilde p, q)_{L^2(\Ome)} - \langle \tilde p, \dtn^\ast q\rangle_{\Gammao}
& = - (\nabla \tilde p, \bff)_{L^2(\Ome)} +\bi\ome (\tilde p, f)_{L^2(\Ome)}
\end{align*}
From Lemmas~\ref{lemma:L2-estimate-adjoint}, \ref{lemma:L2div-estimate-adjoint} we infer
\begin{align*}
\|q\|_{H^1(\Ome)} &\leq C L \left[ \|\bff\|_{L^2(\Ome)} + \|f\|_{L^2(\Ome)}\right],
\end{align*}
which in turn yields
$$
\|(q,\bfv)\|^2_{L^2(\Ome)} \leq \|q\|^2_{L^2(\Ome)} + \|\bfv\|^2_{L^2(\Ome)}
\leq \|q\|^2_{L^2(\Ome)} + 2 \ome^{-2} \|\nabla q\|^2_{L^2(\Ome)} + 2 \ome^{-2} \|\bff \|^2_{L^2(\Ome)}.
$$
In total, we arrive at $\|(q,\bfv)\|_{L^2(\Ome)} \leq C L \left[\|\bff \|_{L^2(\Ome)} + \|f\|_{L^2(\Ome)} \right]$, i.e.,
$\|(q,\bfv)\|_{L^2(\Ome)} \leq C L \|A^\ast(q,\bfv)^\top\|_{L^2(\Ome)}$, which is the claim.
\end{proof}

%---------------------------------------------
\section*{Acknowledgments}
J.M.~Melenk was supported by a JTO fellowship of the Oden Institute \JMM{of T}he University of Texas at Austin and by the Austrian Science Fund (FWF) under grant F65 ``taming complexity in partial differential systems'' (DOI: \href{https://doi.org/10.55776/F65}{10.55776/F65}). 
For open access purposes, the authors have applied a CC BY public copyright licence. 
%This research was funded in whole or in part by the Austrian Science Fund
%(FWF) [grant DOI]. For open access purposes, the author has applied a CC BY
%public copyright license to any author accepted manuscript version arising
%from this submission.\224
L.~Demkowicz and S.~Henneking were supported with AFOSR grant FA9550-19-1-0237 and NSF award 2103524.
Supercomputing resources were provided by the Texas Advanced Computing Center (TACC) at UT Austin on its \emph{Frontera} system under TACC award DMS22025.
%------------------------------------------------------
\bibliographystyle{plain}
\bibliography{bib/journals-iso4.bib, bib/ref_Demkowicz.bib, bib/ref_DPG.bib, bib/ref_laser.bib}
\end{document}